\newtheorem{theorem}{Theorem}[section]
\newtheorem{lemma}[theorem]{Lemma}
\newtheorem{prop}[theorem]{Proposition}
\newtheorem{definition}[theorem]{Definition}
\newtheorem{conj}[theorem]{Conjecture}
\theoremstyle{definition}
\newtheorem{remark}[theorem]{Remark}
\newtheorem{example}[theorem]{Example}
\numberwithin{equation}{section}
\newcommand{\Z}{\mathbb{Z}}
\newcommand{\R}{\mathbb{R}}
\newcommand{\C}{\mathbb{C}}
\newcommand{\PP}{\mathbb{P}}
\renewcommand{\O}{\mathcal{O}}
\newcommand{\F}{\mathcal{F}}
\newcommand{\W}{\mathcal{W}}
\newcommand{\Coh}{\mathrm{Coh}}
\newcommand{\Hom}{\mathrm{Hom}}
\newcommand{\Ext}{\mathrm{Ext}}
\newcommand{\Perf}{\mathrm{Perf}}
\newcommand{\sg}{{sg}}
\newcommand{\A}{\mathcal{A}}
\renewcommand{\o}{\overline}
\title[Speculations on HMS for hypersurfaces in $(\C^*)^n$]{Speculations on homological mirror symmetry for hypersurfaces in $(\C^*)^n$}
\author{Denis Auroux}
\address{Department of Mathematics, UC Berkeley, Berkeley CA 94720-3840,
USA\newline \indent
School of Mathematics, Institute for Advanced Study, Princeton NJ 08540,
USA}
\email{auroux@math.berkeley.edu}
\thanks{This work was partially supported by NSF grants DMS-1264662 and
DMS-1406274; by a Simons Foundation grant (\#\,385573, Simons
Collaboration on Homological Mirror Symmetry); by the Eilenberg Chair at
Columbia University; and by the Schmidt Fellowship and the IAS Fund for
Mathematics.}
\begin{document}
\begin{abstract}
Given an algebraic hypersurface $H=f^{-1}(0)$ in $(\C^*)^n$, homological mirror
symmetry relates the wrapped Fukaya category of $H$ to the derived category
of singularities of the mirror Landau-Ginzburg model. We propose an enriched
version of this picture which also features the wrapped Fukaya category of
the complement $(\C^*)^n\setminus H$ and the Fukaya-Seidel category of the
Landau-Ginzburg model $((\C^*)^n,f)$. We illustrate our speculations on simple examples,
and sketch a proof of homological mirror symmetry for higher-dimensional
pairs of pants.
\end{abstract}

\maketitle

\section{Introduction}

Let $H=f^{-1}(0)\subset (\C^*)^n$ be a smooth algebraic hypersurface (close
to a maximal degeneration limit), whose defining equation
is a Laurent polynomial of the form
\begin{equation}\label{eq:f}
f(x_1,\dots,x_n)=\sum_{\alpha\in A} c_\alpha \tau^{\rho(\alpha)} x_1^{\alpha_1}
\dots x_n^{\alpha_n},
\end{equation}
where $A$ is a finite subset of $\Z^n$, $c_\alpha \in \C^*$, 
$\tau\in \R_+$ is assumed to be sufficiently small, and 
$\rho:A\to\R$ is a convex function.

More precisely, we require that $\rho$ is the restriction to $A$ of
a convex piecewise linear function $\hat\rho$ defined on the convex hull 
$\mathrm{Conv}(A)\subset \R^n$. The maximal domains of linearity of
$\hat\rho$ define a polyhedral decomposition $\mathcal{P}$ of $\mathrm{Conv}(A)$,
whose set of vertices is required to be exactly $A$.
We further assume that all the cells of $\mathcal{P}$ are
congruent under the action of $GL(n,\Z)$ to standard simplices; this
ensures that the limit $\tau\to 0$ corresponds to a maximal degeneration,
and that the mirror is smooth.

It was first proposed by Hori and Vafa \cite{HV} that $H$ should arise as a mirror
to a toric Calabi-Yau manifold $Y$, or more precisely, a toric Landau-Ginzburg model
$(Y,W)$. A careful construction of the
mirror following the philosophy of the Strominger-Yau-Zaslow conjecture
is given in \cite{AAK}. The outcome can be described as follows.

Consider the piecewise linear function $\varphi:\R^n\to \R$ obtained by
``tropicalizing'' $f$,
\begin{equation}\label{eq:tropf}
\varphi(\xi)=\max \{ \langle \alpha,\xi\rangle-
\rho(\alpha)\,|\,\alpha \in A\},
\end{equation} and 
the (noncompact) polytope $\Delta_Y\subseteq \R^{n+1}$ defined by
\begin{equation}\label{eq:delta_Y}
\Delta_Y=\{(\xi,\eta)\in \R^n\oplus \R\,|\,\eta\ge \varphi(\xi)\}.
\end{equation}
Let $Y$ be the (noncompact) $(n+1)$-dimensional toric variety defined
by the moment polytope $\Delta_Y$. Equivalently, $Y$ is described by the
fan $\Sigma_Y\subseteq \R^n\oplus \R$
whose rays are generated by the vectors $(-\alpha,1)$, $\alpha\in A$, and
in which the vectors $(-\alpha_1,1),\dots,(-\alpha_k,1)$ span a cone if
and only if $\alpha_1,\dots,\alpha_k$ span a cell of $\mathcal{P}$.
Finally, we define 
\begin{equation}\label{eq:W}
W=-z^{(0,0,\dots,0,1)}\in \O(Y).
\end{equation}

The irreducible toric divisors of $Y$ are indexed by the elements of $A$;
denote by $Z_\alpha$ the divisor which corresponds to the ray
$(-\alpha,1)$ of $\Sigma_Y$, i.e.\ to the facet of $\Delta_Y$ given by the
graph of $\varphi$ over the region where the maximum in \eqref{eq:tropf} is
achieved by $\alpha$. The superpotential $W$ is then (up to sign) the toric
monomial which vanishes to order 1 on each toric divisor $Z_\alpha$.
Hence $W^{-1}(0)=\bigcup_{\alpha\in A} Z_\alpha$ (the union of all
toric strata).

The direction of homological mirror symmetry that we shall concern ourselves
with predicts an equivalence between the (derived) wrapped Fukaya category of $H$
\cite{AS,AbGenerate} and the
derived category of singularities of the Landau-Ginzburg model $(Y,W)$
\cite{Orlov}, i.e.\ the quotient $D^b_\sg(Z):=D^b\Coh(Z)/\Perf(Z)$ of the
derived category of coherent sheaves on the singular fiber
$Z:=W^{-1}(0)=\bigcup_{\alpha\in A} Z_\alpha$ by the full triangulated
subcategory of perfect complexes:
\begin{conj}\label{conj:hms} {\rm (Homological mirror symmetry)}
\begin{equation}\label{eq:hms}
\W(H)\simeq D^b_\sg(Z).
\end{equation}
\end{conj}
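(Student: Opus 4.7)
The plan is to exploit the polyhedral decomposition $\mathcal{P}$ of $\mathrm{Conv}(A)$, which should induce compatible decompositions of both sides of \eqref{eq:hms}: on the symplectic side a covering of $H$ by pieces modelled on pairs of pants, and on the algebraic side the stratification $Z=\bigcup_{\alpha\in A} Z_\alpha$ by toric strata. The first step is to construct a skeleton of $H$ adapted to the tropicalisation. In the maximal degeneration limit $\tau\to 0$, the image of $H$ under the logarithm map collapses onto the tropical hypersurface cut out by the non-smooth locus of $\varphi$, a polyhedral complex dual to $\mathcal{P}$. Because each cell of $\mathcal{P}$ is an elementary lattice simplex, the piece of $H$ lying over an open star of a vertex of the tropical hypersurface is diffeomorphic to an $(n-1)$-dimensional pair of pants, and $H$ itself is obtained by gluing such pants along lower-dimensional pants, indexed by the cells of $\mathcal{P}$.

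The second step is to establish HMS for the pair of pants itself, which serves as the local model. The mirror to the $(n-1)$-dimensional pair of pants is the affine LG model $(\C^{n+1},x_0x_1\cdots x_n)$, and $D^b_\sg(\{x_0\cdots x_n=0\})$ is generated by the structure sheaves of the $n+1$ coordinate hyperplanes, with morphisms described by matrix factorisations of $x_0\cdots x_n$. On the Fukaya side one selects an explicit collection of $n+1$ non-compact admissible Lagrangian ``arms'' of the pants and computes their wrapped Floer cohomology directly, matching the resulting $A_\infty$ algebra with the matrix-factorisation endomorphism algebra of the generating set on the mirror. This is the case the paper promises to sketch, and I would treat it as the base case of the gluing argument that follows.

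The third step is a descent argument that assembles the local equivalences into a global one. For each cell $\sigma$ of $\mathcal{P}$, let $U_\sigma\subset H$ denote the open subset lying over the corresponding open cell of the tropical hypersurface; the $U_\sigma$ form a cover of $H$ indexed by the face poset of $\mathcal{P}$, each being a product of a lower-dimensional pair of pants with cylindrical factors. One would present $\W(H)$ as the homotopy colimit of $\sigma\mapsto \W(U_\sigma)$ with respect to Viterbo-type restriction functors, working in the framework of Liouville sectors with stops. On the algebraic side, $D^b_\sg(Z)$ admits an analogous presentation over the same poset, built from the $D^b_\sg(Z_\alpha)$ and the singularity categories of the toric intersections $Z_\alpha\cap Z_\beta\cap\cdots$. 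Applying the pair-of-pants equivalence cellwise and checking naturality on face inclusions would then yield the global equivalence.

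The main obstacle is precisely this descent step: wrapped Fukaya categories do not satisfy cosheaf descent in a naive sense, and realising each inclusion $U_\sigma\hookrightarrow H$ as a controlled Viterbo restriction requires a partial compactification of $H$ presenting each $U_\sigma$ as a Liouville sector with stops reflecting the geometry of $\varphi$. Matching the gluing cocycles on the two sides---on the symplectic side the attaching data between adjacent pants along their common lower-dimensional faces, on the algebraic side the relevant $\Ext^1$ classes between structure sheaves of adjacent components $Z_\alpha,Z_\beta$---is where the combinatorics of $\rho$ and $\varphi$ genuinely enter. The pair-of-pants equivalence itself is a focused computation; the substantive difficulty in promoting it to Conjecture~\ref{conj:hms} lies in setting up a descent package strong enough to glue the local equivalences and in verifying that the gluing data on the two sides correspond.
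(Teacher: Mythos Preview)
First, a framing point: Conjecture~\ref{conj:hms} is stated in the paper as a \emph{conjecture}, not a theorem. The paper does not offer a proof in the generality you address; it only sketches the case where $H$ is a higher-dimensional pair of pants (Section~\ref{s:pantscomp}), and even that sketch leaves two statements unproven (split-generation by the $L_{\{i\}}$, and uniqueness of the $A_\infty$-structure up to homotopy). So there is no ``paper's own proof'' to match against for general $H$.

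Your proposed route---decompose $H$ into pairs of pants along the tropical skeleton, prove HMS locally, then glue via Liouville-sector descent---is a coherent research program and is essentially the strategy Lee carries out for Riemann surfaces (cited in the paper as \cite{Lee}). But it is \emph{not} the route the paper takes, even for the pair-of-pants case that would serve as your base input. The paper's approach to $\W(\Pi_n)$ is an induction on $n$ that exploits the identification $\Pi_n\simeq(\C^*)^n\setminus\Pi_{n-1}$ together with the functors $\rho$, $j$, $\alpha_0$, $\alpha_\infty$ constructed in Sections~\ref{s:hmscompl}--\ref{s:conjFS}: one works with the $2^{n+1}-1$ real components $L_I$, computes their wrapped Floer groups and products directly (Propositions~\ref{prop:HWLI}--\ref{prop:HWLIJprod}), and obtains the needed exact triangles by lifting from $\W(\Pi_{n-1})$ via $j$ (Proposition~\ref{prop:exactuIJK}). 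No Liouville-sector gluing or cosheaf descent enters. What your approach would buy, if the descent machinery were in place, is a uniform treatment of arbitrary $H$; what the paper's approach buys is that the pair-of-pants case becomes a concrete Floer computation plus a formality/generation argument, without needing the sector technology.

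One genuine technical issue in your sketch: your algebraic descent is misformulated. You propose building $D^b_\sg(Z)$ from the $D^b_\sg(Z_\alpha)$ and the $D^b_\sg$ of toric intersections $Z_\alpha\cap Z_\beta\cap\cdots$. But under the maximal degeneration hypothesis every $Z_\alpha$ and every such intersection is \emph{smooth}, so all of these singularity categories vanish, and you cannot recover $D^b_\sg(Z)$ from a diagram of zero categories. The descent on the $B$-side has to be set up differently---for instance via matrix factorizations on affine charts of $(Y,W)$, or via $D^b\Coh$ of unions of strata followed by quotienting---and matching that with the symplectic side is where the real work lies.
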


(The other direction of homological mirror symmetry, relating
coherent sheaves on $H$ to the Fukaya category of the Landau-Ginzburg model $(Y,W)$, is
established in work in progress of the author with Mohammed Abouzaid \cite{AA};
the methods used to approach the two directions are completely unrelated.)

It is possible, and even likely, that Conjecture \ref{conj:hms} should in fact be stated
at the level of the idempotent completions of the derived categories on each
side of \eqref{eq:hms}; for simplicity we ignore this issue here.

The wrapped Fukaya category $\W(H)$ depends only
on the set $A\subset \Z^n$, not on the coefficients $c_\alpha$ or
the function $\rho$ in \eqref{eq:f}, since the hypersurfaces corresponding
to different choices are deformation equivalent Liouville (or Stein)
submanifolds of $(\C^*)^n$.  Meanwhile, $Y$ depends on the
polyhedral decomposition $\mathcal{P}$ of $\mathrm{Conv}(A)$, so different
choices of $\rho$ can yield different mirrors; however these mirrors are
birational to each other (related by flops), and so
the resulting derived categories of singularities are expected to be
equivalent.

So far, homological mirror symmetry as stated in Conjecture \ref{conj:hms}
has only been established in the
1-dimensional case, i.e.\ for $H\subset (\C^*)^2$: the case of the
pair of pants (and other punctured spheres) is established in \cite{AAEKO},
and higher genus Riemann surfaces are treated in Heather Lee's thesis \cite{Lee}.
In higher dimensions, the first step is to consider
(generalized) pairs of pants. With the current technology, the
computation of the wrapped Fukaya category requires quite a bit of work;
we sketch a possible approach in \S \ref{s:pantscomp}. (Contrast with
Sheridan's computations for compact exact Lagrangians \cite{Sheridan}.)
We also note Nadler's recent introduction of ``wrapped microlocal sheaves'',
ultimately expected to be equivalent to the wrapped Fukaya category;
the analogue of Conjecture \ref{conj:hms} for wrapped
microlocal sheaves has already been verified for higher-dimensional pairs of pants
\cite{NadlerWrap}.

Since $D^b_\sg(Z)$ is by definition a
quotient of $D^b\Coh(Z)$, it is natural to ask for an interpretation of the
latter category on the symplectic side. We propose:

\begin{conj}\label{conj:hmscompl}
$Z=\bigcup_\alpha Z_\alpha\subset Y$ is mirror to the complement
$(\C^*)^n\setminus H$, and there is a commutative diagram
\begin{equation}\label{eq:hmscompl}
\begin{CD}
\W((\C^*)^n\setminus H) @>\simeq>> D^b\Coh(Z) \\
@V{\rho}VV @VV{q}V \\
\W(H) @>\simeq>> D^b_\sg(Z)
\end{CD}
\end{equation}
where $\rho$ is a restriction functor (see \S \ref{s:hmscompl}), $q$ is the projection to the quotient,
and the horizontal equivalences are predicted by homological mirror
symmetry. 
\end{conj}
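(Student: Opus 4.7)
The plan has three parts: construct the restriction functor $\rho$ geometrically, establish the top horizontal equivalence $\W((\C^*)^n\setminus H)\simeq D^b\Coh(Z)$ independently, and verify that the square commutes by matching $\rho$ with the Verdier quotient $q$. For the first step, I would realize $\rho$ via a linking construction at the end of $(\C^*)^n\setminus H$ corresponding to $H$. Concretely, let $U$ be a small tubular neighborhood of $H$ in $(\C^*)^n$; then $V:=(\C^*)^n\setminus\o{U}$ is a Liouville subdomain of $(\C^*)^n\setminus H$ whose contact boundary $\partial V$ is a prequantization-type $S^1$-bundle over $H$. A Lagrangian $L\subset(\C^*)^n\setminus H$ whose cylindrical end lies in $\partial V$ intersects $\partial V$ in an $(n-1)$-dimensional Legendrian $\Lambda_L$, and projecting $\Lambda_L$ along the $S^1$-fiber direction yields a Lagrangian $\rho(L)\subset H$. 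In the language of stops, $H$ provides a stop for $(\C^*)^n\setminus H$ whose ``link'' is $\W(H)$, and $\rho$ is the associated cofiber/projection functor.

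For the top equivalence I would argue via generators indexed by $A$ on both sides. On the A-side, take the Lagrangian sections $L_\alpha$ of the SYZ fibration on $(\C^*)^n\setminus H$ associated with the regions of the tropicalization \eqref{eq:tropf} where the maximum is achieved at $\alpha\in A$; on the B-side, take the structure sheaves $\O_{Z_\alpha}$ of the toric divisors. Morphism spaces are then computed on the A-side via wrapping combinatorics controlled by the polyhedral decomposition $\mathcal{P}$, and on the B-side via \v{C}ech-type resolutions along the toric stratification $Z=\bigcup Z_\alpha$. The identification is combinatorial in nature, analogous in spirit to (but more elaborate than) the pairs-of-pants computation sketched in \S\ref{s:pantscomp}.

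Given the top equivalence, commutativity of the square should be essentially formal: $\rho(L_\alpha)$ is a linking Lagrangian in $H$ concentrated near the stratum indexed by $\alpha$, matching the class of $\O_{Z_\alpha}$ in $D^b\Coh(Z)/\Perf(Z)$ via the bottom equivalence of Conjecture~\ref{conj:hms}. Dually, perfect complexes on $Z$ (generated by pullbacks of line bundles from $Y$) should correspond to Lagrangians displaceable away from $H$, which vanish under the $S^1$-reduction $\rho$, so that $\rho$ factors through $q$.

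The main obstacle is clearly the top equivalence itself, which is a new HMS statement of the same depth as Conjecture~\ref{conj:hms}; as the introduction stresses, the two directions of HMS in this setting proceed by unrelated methods. The most promising route is a stop-removal argument in the sense of Ganatra-Pardon-Shende: present $((\C^*)^n,f)$ as a Liouville Landau-Ginzburg model, express $\W((\C^*)^n\setminus H)$ as arising from the Fukaya-Seidel category of $((\C^*)^n,f)$ by removing the stop at $f^{-1}(0)$, and mirror this on the B-side to the description of $D^b\Coh(Z)$ in terms of gluings of sheaves on the components $Z_\alpha$. Controlling the combinatorics of this gluing for arbitrary $\mathcal{P}$ and arbitrary $n$, and ensuring the matching is compatible with the vertical quotient functors, is where the bulk of the technical work would lie.
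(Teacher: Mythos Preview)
First, note that the statement is explicitly a \emph{conjecture} in the paper; there is no general proof to compare against. The paper constructs the restriction functor $\rho$ carefully in \S\ref{s:hmscompl}, motivates the top equivalence, and verifies the full diagram only in examples (the $1$-dimensional pair of pants in \S\ref{s:pants1}, with partial computations for higher-dimensional pants in \S\S\ref{s:pants}--\ref{s:pantscomp}). You correctly identify that the top equivalence is an HMS statement of comparable depth to Conjecture~\ref{conj:hms} and is the main obstacle.

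Your construction of $\rho$ differs from the paper's and has a genuine gap. You propose to project the Legendrian $\Lambda_L\subset\partial V$ along the $S^1$-fiber of a prequantization-type bundle over $H$. But there is no reason for the image of a general Legendrian under such a projection to be a smooth embedded Lagrangian in $H$, nor for this operation to extend to an $A_\infty$-functor on morphisms. The paper instead deforms the Liouville structure so that a neighborhood of $H$ contains a Liouville subdomain whose completion is $H\times\C^*$, applies the Abouzaid--Seidel restriction functor $r:\W((\C^*)^n\setminus H)\to\W(H\times\C^*)$, and then constructs a projection $p:\W(H\times\C^*)\to\W(H)$ \emph{formally}, via Yoneda modules and the $2$-periodicity induced by the degree~$2$ natural transformation $e_\ell\otimes z$ (see \eqref{eq:CWperiodic} and Remark~\ref{rmk:nattrans}). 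Only for ``framed'' Lagrangians (Remark~\ref{rmk:framed}) does $\rho$ reduce to the geometric boundary-projection you describe.

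Your stop-removal strategy for the top equivalence is also miscalibrated. Removing the stop at $f^{-1}(0)$ from $\F((\C^*)^n,f)$ yields $\W((\C^*)^n)$, not $\W((\C^*)^n\setminus H)$. The relevant structural input in the paper is rather the pushout diagram of Remark~\ref{rmk:sylvan}, which presents $\W((\C^*)^n\setminus H)$ as a gluing of $\F((\C^*)^n,f)$ and $\F(\C^*\times H,z)$ along $\W(H)$; this is what underlies the inductive approach to pairs of pants in \S\ref{s:pantscomp}. Your generator ansatz ($L_\alpha\leftrightarrow\O_{Z_\alpha}$ indexed by $\alpha\in A$) is in the right spirit and matches the paper's treatment of pairs of pants (where components $L_I$ of the real locus correspond to $\O_{Z_I}$), but the paper does not claim a general proof along these lines.
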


We note that the categories in the top row are $\Z$-graded,
whereas those in the bottom row are only $\Z/2$-graded unless some
additional data is chosen.
 
Roughly speaking, the restriction functor $\rho$ singles out the ends of
a Lagrangian submanifold of $(\C^*)^n\setminus H$ which lie on the missing
divisor $H$. More precisely, $\rho$ is the
composition of restriction to a neighborhood of $H$ isomorphic to
the product of $H$ with a punctured disc $\mathbb{D}^*$, and
``projection'' from $H\times \mathbb{D}^*$ to $H$; see \S \ref{s:hmscompl}.

Two comments are in order. First, the top
row of \eqref{eq:hmscompl} fits into the general philosophy that removing a
divisor from a symplectic manifold (here $(\C^*)^n$) should correspond to a 
degeneration of its mirror (in our case $(\C^*)^n$) to a singular space
(namely~$Z$);
the level sets of $W$ provide exactly such a degeneration.
Seidel and Sheridan's formalism of relative Fukaya categories
\cite{Sheridan2} exhibits $\W((\C^*)^n)$ as a deformation of a
full subcategory of $\W((\C^*)^n\setminus H)$ (consisting of Lagrangians
with no ends on $H$, i.e.\ annihilated by $\rho$), just as the
derived categories of the regular fibers of $W$ arise as deformations of a
full subcategory of $D^b\Coh(Z)$ (in fact, $\Perf(Z)$).

Second, $(\C^*)^n\setminus H$ can itself be viewed as a hypersurface in
$(\C^*)^{n+1}$, defined by
$$\hat{f}(x_1,\dots,x_{n+1})=f(x_1,\dots,x_n)+x_{n+1}=0.$$
(This is in fact one way to define the Liouville structure on the complement
of $H$). The tropicalization of $\hat{f}$ is $\hat\varphi(\xi_1,\dots,
\xi_{n+1})=\max(\varphi(\xi_1,\dots,\xi_n),\xi_{n+1})$, and thus the
construction in \cite{AAK} predicts that the mirror to this hypersurface is 
the toric Landau-Ginzburg model $(\hat{Y},\hat{W})=(\C\times Y, yW)$ 
(where $y$ is the coordinate on the $\C$ factor). On the other hand,
Orlov's ``Kn\"orrer periodicity'' result \cite{OrlovKnorrer} implies that
the derived category of singularities of the Landau-Ginzburg model
$(\hat{Y},\hat{W})$ is equivalent to the derived category of coherent
sheaves of $W^{-1}(0)=Z\subset Y$. Thus, the two predictions for the
mirrors of $(\C^*)^n\setminus H$, namely the Landau-Ginzburg model 
$(\hat{Y},\hat{W}$) and the singular variety $Z$, are consistent with each
other.

We can further enrich the picture by considering the Fukaya-Seidel category
of the Landau-Ginzburg model $((\C^*)^n,f)$, using $f$ to view
$(\C^*)^n\setminus H$ as the total space of a fibration over $\C^*$.
Specifically, assume that $0\in
A$, so that $f$ has a non-trivial constant term. The version of the
Fukaya-Seidel category that we consider is essentially 
that introduced by Abouzaid in \cite{AbToric,AbToric2}, at least when $0$ is in the
interior of $\mathrm{Conv}(A)$; specifically, the objects are
admissible Lagrangian submanifolds of
$(\C^*)^n$ with boundary on a fiber of $f$, e.g.\ $f^{-1}(0)=H$, which are moreover required to
lie in the subset of $(\C^*)^n$ where the constant term dominates all the
other monomials in $f$. Due to this latter restriction, our category is
often smaller than that defined by Seidel; to avoid 
confusion, we denote the restricted version by $\F^\circ((\C^*)^n,f)$.
One notable difference from Abouzaid's setup is that when 
$0$ is not in the interior of $\mathrm{Conv}(A)$ the region where the
constant term dominates is non-compact and the category we consider
involves some wrapping. See \S \ref{s:FS}.

There are ``acceleration'' functors $\alpha_0$ and $\alpha_\infty$
from 
$\F^\circ((\C^*)^n,f)$ to 
$\W((\C^*)^n\setminus H)$. 
The functor $\alpha_0$ takes admissible Lagrangian submanifolds in 
$((\C^*)^n,f)$ with boundary in $f^{-1}(0)=H$ and views them as Lagrangian submanifolds 
of $(\C^*)^n\setminus H$. Acceleration then ``turns on'' wrapping around
the central fiber $H=f^{-1}(0)$. By construction, $\rho\circ
\alpha_0:\F^\circ((\C^*)^n,f)\to \W(H)$ is expected to coincide with the ``restriction to the
fiber'' functor. Meanwhile, $\alpha_\infty$ takes admissible Lagrangians
with boundary in some other fiber $f^{-1}(c_0)$, and extends them by parallel
transport along a path from $c_0$ to infinity in order to obtain 
properly embedded Lagrangian submanifolds of $(\C^*)^n$ which avoid $H$
altogether. The construction of $\alpha_\infty$ is not canonical, however if the
following assumption holds:
\begin{equation}\label{eq:linebundlecase}
\text{$0\in A$ is a vertex of every maximal cell of the polyhedral
decomposition $\mathcal{P}$,}
\end{equation}
then there is a distinguished choice; see \S \ref{s:FSaccel}.
The two types of acceleration functors are manifestly different, 
as $\rho\circ \alpha_\infty$ is identically zero.

The interpretation of the acceleration functors $\alpha_0$ and
$\alpha_\infty$ under mirror symmetry is as follows.  The element $0\in A$
corresponds to a distinguished irreducible toric divisor $Z_0$ of $Y$.
When $Z_0$ is compact (which corresponds to $0$ being an interior point
of $\mathrm{Conv}(A)$), it follows from Abouzaid's thesis \cite{AbToric2} 
that the Fukaya-Seidel category $\F^\circ((\C^*)^n,f)$ considered here is derived 
equivalent to $D^b\Coh(Z_0)$. In fact Abouzaid's argument can be adapted to
show that the equivalence still holds in the non-compact case.
There is a natural functor $i_*:D^b\Coh(Z_0)\to
D^b\Coh(Z)$ induced by the inclusion $i:Z_0\hookrightarrow Z$. 
On the other hand, there is sometimes a preferred projection $\pi:Z\to
Z_0$; this is e.g.\ the case when \eqref{eq:linebundlecase} holds,
which causes $Y$ to be isomorphic to the total space of a line bundle
over $Z_0$. We then have a pullback functor $\pi^*:D^b\Coh(Z_0)\to
D^b\Coh(Z)$, whose image is contained in $\Perf(Z)$ since the maximal
degeneration assumption implies that $Z_0$ is smooth, i.e.\ $\Perf(Z_0)=
D^b\Coh(Z_0)$.

\begin{conj}\label{conj:FS} Under homological mirror symmetry, 
\begin{enumerate}
\item the acceleration functor
$\alpha_0:\F^\circ((\C^*)^n,f)\to \W((\C^*)^n\setminus H)$ corresponds to the
inclusion pushforward $i_*:D^b\Coh(Z_0)\to D^b\Coh(Z)$;
\item if \eqref{eq:linebundlecase} holds, then the functor\/
$\alpha_\infty:\F^\circ((\C^*)^n,f)\to \W((\C^*)^n\setminus H)$ corresponds to
the pullback $\pi^*:D^b\Coh(Z_0)\to \Perf(Z)\subset D^b\Coh(Z)$.
\end{enumerate}
\end{conj}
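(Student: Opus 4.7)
The plan is to reduce the statement to a check on generators, leveraging the HMS equivalences already invoked in the paper: Abouzaid's derived equivalence $\F^\circ((\C^*)^n,f)\simeq D^b\Coh(Z_0)$, the conjectural equivalence $\W((\C^*)^n\setminus H)\simeq D^b\Coh(Z)$ from Conjecture \ref{conj:hmscompl}, and the compatibilities built into the SYZ construction of \cite{AAK}. A derived functor is determined by its action on a split-generating set of objects together with the induced maps on morphism complexes, so it suffices to identify the images under $\alpha_0$ and $\alpha_\infty$ of a family of distinguished admissible Lagrangians whose Abouzaid-mirrors are explicit line bundles on $Z_0$ (these suffice because $D^b\Coh(Z_0)$ is split-generated by line bundles for smooth toric $Z_0$).

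For part (1), I would work with admissible thimble-type sections $L_\beta$ of the SYZ fibration in the region where the constant term of $f$ dominates, indexed so that under Abouzaid's equivalence $L_\beta$ corresponds to a toric line bundle $\O_{Z_0}(D_\beta)$. The functor $\alpha_0$ views $L_\beta$ as a Lagrangian in $(\C^*)^n\setminus H$ and activates wrapping around the puncture at $H$. Near $H$, the SYZ fibration on $(\C^*)^n\setminus H$ models a neighborhood of the toric divisor $Z_0$ inside $Y$, and the family-Floer dictionary underlying Conjecture \ref{conj:hmscompl} should identify $\alpha_0(L_\beta)$ with a coherent sheaf on $Z$ that is scheme-theoretically supported on $Z_0$ and restricts to $\O_{Z_0}(D_\beta)$ there, i.e.\ with $i_*\O_{Z_0}(D_\beta)$. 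To upgrade this object-wise identification to a natural isomorphism of functors, I would match the continuation maps $\Hom_{\F^\circ}(L_\beta,L_{\beta'})\to \Hom_{\W}(\alpha_0(L_\beta),\alpha_0(L_{\beta'}))$ with the $i_*$-induced maps $\Hom_{Z_0}(\O_{Z_0}(D_\beta),\O_{Z_0}(D_{\beta'}))\to \Hom_Z(i_*\O_{Z_0}(D_\beta),i_*\O_{Z_0}(D_{\beta'}))$, using adjunction and the projection formula to localize this check to a neighborhood of $H$.

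For part (2), assumption \eqref{eq:linebundlecase} makes $Y$ the total space of a line bundle $p\colon Y\to Z_0$ whose fiber direction is SYZ-dual to the direction in which $\alpha_\infty$ extends Lagrangians toward $|f|\to\infty$. Thus the extended Lagrangian $\alpha_\infty(L_\beta)$ sweeps out the fiber direction, and under the family-Floer dictionary its mirror should be a sheaf pulled back along $p$ and then restricted to $Z=p^{-1}(Z_0)$, which is precisely $\pi^*\O_{Z_0}(D_\beta)$. Because $\alpha_\infty(L_\beta)$ avoids $H$ entirely, the corresponding sheaf is locally free along $Z_0$, consistent with $\pi^*\O_{Z_0}(D_\beta)\in \Perf(Z)$ (the maximal degeneration hypothesis supplies smoothness of $Z_0$). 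Naturality on morphisms should again follow by matching Floer continuation maps with the unit and counit maps associated to $\pi^*\dashv \pi_*$.

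The principal obstacle is that the argument is anchored on Conjectures \ref{conj:hms} and \ref{conj:hmscompl}, which are not themselves established in the generality required; in particular one needs a version of HMS for $\W((\C^*)^n\setminus H)$ explicit enough to read off which SYZ section maps to which coherent sheaf on $Z$, together with the commutativity of \eqref{eq:hmscompl}. Beyond this, the delicate point is $A_\infty$-compatibility of the functors near $H$: for $\alpha_0$ the new wrapping generators must exactly reproduce the Ext algebra of $i_*$-sheaves, which carries nontrivial Tor contributions from the other components $Z_\alpha$ meeting $Z_0$; while for $\alpha_\infty$ one must verify the vanishing of wrapped contributions from the direction of $H$, in order to match the perfectness of $\pi^*\O_{Z_0}(D_\beta)$. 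I expect the bulk of the technical work to concentrate on these local near-$H$ calculations, which structurally encode the $i_*\dashv \pi^*$ dichotomy between the two acceleration functors.
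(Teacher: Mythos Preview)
The statement you are attempting to prove is labeled as a \emph{conjecture} in the paper, and the paper does not supply a proof of it. Rather, the paper offers supporting evidence in specific examples: in \S\ref{s:Pn} (local $\PP^n$) the identifications $\alpha_\infty(L_k)\leftrightarrow \pi^*\O_{\PP^n}(k)=\O_Z(k)$ and $\alpha_0(L_k)\leftrightarrow i_*\O_{\PP^n}(k)=\O_{Z_0}(k)$ are checked by explicit wrapped Floer computations; in \S\ref{s:pants} the analogous identifications are verified for the pair of pants. There is therefore no ``paper's own proof'' against which to compare your proposal.

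Your outline is a sensible strategy for attacking the conjecture, and indeed mirrors the heuristics the paper uses when discussing the examples: match generators (Abouzaid's admissible sections $\leftrightarrow$ line bundles on $Z_0$), then match morphism spaces. However, you correctly identify the essential gap yourself: the argument is conditional on Conjecture~\ref{conj:hmscompl}, which is itself open in the paper. In fact the dependency is somewhat circular, since the paper's main evidence for Conjecture~\ref{conj:hmscompl} in the pair-of-pants case (\S\ref{s:pantscomp}) is developed in tandem with, rather than prior to, the verification of Conjecture~\ref{conj:FS}. So your proposal is not a proof but a reduction of one conjecture to another of comparable difficulty. The paper treats these statements as a package of mutually reinforcing speculations, to be verified example by example, rather than as a theorem admitting a general proof at the present level of technology.
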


\noindent The functors $\alpha_0$ and $\alpha_\infty$ have a host of
further properties, which can ultimately be interpreted in terms of
push-pull adjunctions for $i:Z_0\to Z$ and $\pi:Z\to Z_0$ on the mirror side.
For example, there is a distinguished
natural transformation between the functors $\alpha_\infty$ and $\alpha_0$,
whose mapping cone involves a ``lifting'' functor
$$j:\W(H)\to \W((\C^*)^n\setminus H).$$ The functor $j$ is induced by the
parallel transport of Lagrangian submanifolds of $H$ over an arc connecting 
0 to  infinity in $\C^*$ (avoiding the critical values of $f$).
By construction, $j$ is a right (quasi)inverse to the restriction functor
$\rho$, i.e.\ $\rho \circ j\simeq \mathrm{id}$; assuming
\eqref{eq:linebundlecase}, the functor $j$ should correspond under mirror symmetry to
an explicit splitting of the quotient $q:D^b\Coh(Z)\to D^b_\sg(Z)$.
See \S \ref{s:FSj}.

\begin{remark}\label{rmk:rescale}
While the defining equation of the hypersurface $H$ can be rescaled by any
Laurent monomial, the category $\F^\circ((\C^*)^n,f)$ depends very much on the choice of 
normalization, and so do the functors
$\alpha_0,\alpha_\infty,j$ discussed above. Given $\alpha \in A$,
considering $x^{-\alpha}f$ instead of $f$ causes the distinguished
component of $Z$ to become $Z_\alpha$ instead of $Z_0$. We then get
one instance of Conjecture \ref{conj:FS} for each component of $Z$.
\end{remark}

\begin{remark}\label{rmk:sylvan}
As pointed out by Zack Sylvan, there is another way to shed light on the relationship between 
$\F((\C^*)^n,f)$ and $\W((\C^*)^n\setminus H)$, by viewing
$(\C^*)^n\setminus H$ as the outcome of gluing together the Landau-Ginzburg
models $((\C^*)^n,f)$ and $(\C^*\times H,z)$ (where $z$ is the coordinate on
the first factor) along their common fiber $H$. This is an instance of gluing together Liouville domains
with {\em stops} and their partially wrapped Fukaya categories \cite{Sylvan},
and one expects a pushout diagram \cite{Sylvan2, GPS}
$$\begin{CD}\W(H)@>>> \F((\C^*)^n,f)\\
@VVV @VVi_2V \\
\F(\C^*\times H,z) @>i_1>> \W((\C^*)^n\setminus H).
\end{CD}
$$
Because the Fukaya category of $(\C^*,z)$ is generated by one object with endomorphism algebra
$\C[t]$ (it is mirror to the affine line), the category $\F(\C^*\times H,z)$ is
related to $\W(H)$ by ``extension of scalars'' from $\C$ to $\C[t]$. (This
is {\em not} the left edge of the pushout diagram, which amounts to tensoring
with the torsion module $\C[t]/t$ rather than $\C[t]$.) Up to this,
$i_1$ is essentially the functor $j$ discussed above. Meanwhile, when
\eqref{eq:linebundlecase} holds, there is no difference between
$\F((\C^*)^n,f)$ and $\F^\circ((\C^*)^n,f)$, and $i_2$ coincides with 
$\alpha_\infty$. (Otherwise $\F^\circ$ is strictly smaller).
The pushout diagram then implies that $\W((\C^*)^n\setminus H)$ is generated
by the images of the functors $j$ and $\alpha_\infty$.
\end{remark}

To illustrate the various constructions and conjectures, we will primarily consider
two families of examples:

\begin{example}[Pairs of pants]\label{ex:pants}
$$f(x_1,\dots,x_n)=x_1+\dots+x_n+1,$$
$H=:\Pi_{n-1}$ is the $(n-1)$-dimensional pair of pants, and its complement is
isomorphic to the
$n$-dimensional pair of pants $\Pi_n$. The mirror is $(Y,W)\simeq
(\C^{n+1},-z_1\dots z_{n+1})$, and $Z=\{z_1\dots z_{n+1}=0\}$ is the union of the $n+1$
coordinate hyperplanes.
\end{example}

\begin{example}[Local $\PP^n$]\label{ex:Pn}
$$f(x_1,\dots,x_n)=x_1+\dots+x_n+\frac{\tau}{x_1\dots x_n}+1,$$
$Y$ is isomorphic to the total space of the anticanonical bundle
$\O(-(n+1))\to \PP^n$, and $Z\subset Y$ is the union of the zero section $Z_0\simeq
\PP^n$ and the total spaces of $\O(-(n+1))$ over the $n+1$ coordinate hyperplanes
of $\PP^n$.
\end{example}

We will in particular see in \S\S \ref{s:pants}--\ref{s:pantscomp} that Conjectures
\ref{conj:hmscompl} and \ref{conj:FS} provide a blueprint for understanding
the wrapped Fukaya categories of pairs of pants by induction on dimension,
using the fact that $\Pi_n\simeq (\C^*)^n\setminus \Pi_{n-1}$.

The rest of this paper is organized as follows. The first two sections are
expository: in Section \ref{s:wrapped} we briefly review the definition of
the wrapped Fukaya category, and Section~\ref{s:pants1} illustrates the
definition by considering the case of the (1-dimensional) 
pair of pants treated in \cite{AAEKO}, with an eye towards Conjecture
\ref{conj:hmscompl}. The next three sections describe the
various categories and functors that appear in Conjectures \ref{conj:hmscompl} and \ref{conj:FS}:
in Section \ref{s:hmscompl} we introduce the wrapped category $\W((\C^*)^n\setminus H)$ and the restriction functor
$\rho$; in Section \ref{s:FS} we discuss the category
$\F^\circ((\C^*)^n,f)$ and its main properties; and
Section \ref{s:conjFS} is devoted to the functors $\alpha_0$, $\alpha_\infty$ and $j$. 
Sections \ref{s:Pn} and \ref{s:pants}
illustrate these constructions for local $\PP^n$ and
for higher-dimensional pants; finally, Section~\ref{s:pantscomp}
sketches an approach to the computation of $\W(\Pi_n)$.

\subsection*{Acknowledgements}
I would like to thank Mohammed Abouzaid, Sheel Ganatra, Dmitri Orlov, Paul Seidel 
and Zack Sylvan for a number of useful comments and suggestions.
I am also thankful to Columbia University and IAS for their hospitality and support
during the preparation of this text. This work was partially supported by
NSF grants DMS-1264662 and DMS-1406274, the Simons Foundation
(grant \#\,385573, Simons Collaboration on Homological Mirror Symmetry),
the Eilenberg Chair at Columbia University, the Schmidt Fellowship
and the IAS Fund for Mathematics.

\section{Background: the wrapped Fukaya category}\label{s:wrapped}

Let $(X,\omega=d\lambda)$ be a Liouville manifold, i.e.\ an exact symplectic
manifold such that the flow of the Liouville vector field $Z$ defined by
$\iota_Z \omega=\lambda$ is complete and outward pointing at infinity.
In other terms, $X$ is the completion of a compact domain $X^{in}$
with contact boundary $(\partial X^{in},\alpha=\lambda_{|\partial X^{in}})$,
and the Liouville flow identifies
$X\setminus X^{in}$ with the positive symplectization 
$(1,+\infty)\times \partial X^{in}$
endowed with the exact symplectic form $\omega=d(r\alpha)$ (where $r$ is the
coordinate on $(1,+\infty)$). In this model, $Z=r\partial_r$.

The objects of the wrapped Fukaya category $\W(X)$ are properly embedded
exact Lagrangian submanifolds which are conical at infinity, 
i.e.\ any non-compact ends are modelled on the product of $(1,+\infty)$
with some Legendrian submanifold of $(\partial X^{in},\alpha)$.

The main feature of the wrapped Fukaya category is that Floer theory
theory is modified by suitable Hamiltonian perturbations so as to include
not only Lagrangian intersections inside $X^{in}$, but also Reeb chords 
between the Legendrians in $\partial X^{in}$. There are two main ways to
carry out the construction, which we briefly review. (We will mostly use the
first one.) We assume general familiarity with Lagrangian Floer homology and
ordinary Fukaya categories; see \cite{AuGuide,SeBook}.

\subsection{Construction via quadratic Hamiltonian perturbations}\label{ss:wrap_quadratic}

This setup for wrapped Fukaya categories is described in detail in
\cite{AbGenerate}. In this version, the wrapped Floer complex of a pair of objects $L_0,L_1$ is defined
using a specific class of Hamiltonians which grow quadratically
at infinity, say 
$H=\frac12 r^2$ outside of a compact set. 
Given two objects $L_0,L_1$, the
generating set $\mathcal{X}(L_0,L_1)$ of 
$CW(L_0,L_1)=CW(L_0,L_1;H)$ consists of time 1 trajectories
of the Hamiltonian vector field $X_H$ which start on $L_0$ and end on 
$L_1$, i.e.\ points of $\phi^1_H(L_0)\cap L_1$. Since at infinity $X_H$
is $r$ times the Reeb vector field of $(\partial
X^{in},\alpha)$, the generators in the cylindrical end can also be thought of as
Reeb chords (of arbitrary positive length) from $L_0$ to $L_1$ at the contact boundary.
(In practice one may need to perturb $H$ slightly in order to achieve
transversality.)

The differential $\partial=\mu^1$ on $CW(L_0,L_1)$ counts solutions to Floer's
equation \begin{equation}\label{eq:floereq}
\frac{\partial u}{\partial s}+J(t,u)\left(\frac{\partial u}{\partial t}-
X_H(t,u)\right)=0,
\end{equation}
where $u:\R\times [0,1]\to X$ is subject to the boundary conditions 
$u(s,0)\in L_0$ and $u(s,1)\in L_1$ and a finite energy condition.
Given two generators $x_-,x_+ \in \mathcal{X}(L_0,L_1)$, the coefficient of
$x_-$ in $\partial x_+$ is a (signed) count of index 1 solutions of
\eqref{eq:floereq} (up to reparametrization by translation) which converge
to $x_\pm$ as $s\to \pm\infty$.

Floer's equation can be recast as a plain
Cauchy-Riemann equation by the following trick: consider
$\tilde{u}(s,t)=\phi_H^{1-t}(u(s,t))$, where $\phi_H^{1-t}$ is the flow of
$X_H$ over the interval $[t,1]$. Then \eqref{eq:floereq}
becomes
$$\frac{\partial \tilde{u}}{\partial
s}+\tilde{J}(t,\tilde{u})\,\frac{\partial
\tilde{u}}{\partial t}=0,$$
where $\tilde{J}(t)=(\phi_H^{1-t})_*(J(t))$. Hence solutions to Floer's
equation correspond to
honest $\tilde{J}$-holomorphic strips with boundaries on $\phi_H^1(L_0)$ and
$L_1$.

The Floer product $\mu^2$ and higher compositions
$$\mu^k:CW(L_{k-1},L_k;H)\otimes \dots\otimes CW(L_0,L_1;H)\to
CW(L_0,L_k;H)[2-k]$$ are constructed similarly, with an important subtlety.
The $k$-fold product $\mu^k$ counts rigid solutions to a perturbed
Cauchy-Riemann equation of the form \begin{equation}\label{eq:perthol}
\Bigl(du-X_H\otimes \beta\Bigr)^{0,1}_J=0,
\end{equation}
where $u$ is a map from a domain $D$ biholomorphic to a disc with $k+1$ boundary punctures (viewed as
strip-like ends) to $X$ and $\beta$ is a closed 1-form on $D$ such that $\beta_{|\partial
D}=0$ and $\beta$ is standard in each strip-like end.

(Rather than the usual punctured discs, a 
convenient model for the domain $D$ which makes the strip-like ends readily apparent
is to take $D$ to be a strip $\R\times [0,k]$ with $k-1$ slits $(s_j,+\infty)
\times \{t_j\}$ removed. Away from the boundary of the moduli space one can
moreover take $t_j=j$. The conformal parameters are then
simply $s_1,\dots,s_{k-1}$ up to simultaneous translation, and
we can take $\beta=dt$.)

The issue is that counting solutions of \eqref{eq:perthol} with boundary on
$L_0,\dots,L_k$ naturally yields a map with values in $CW(L_0,L_k;kH)$,
whose generators are time $k$ (rather than time 1) trajectories of $X_H$ from $L_0$ to
$L_k$. While the usual construction of a continuation map from $CW(L_0,L_k;kH)$ to
$CW(L_0,L_k;H)$ fails due to lack of energy estimates,
a map can nonetheless be constructed via a rescaling trick
\cite{AbGenerate}. Namely, the time $\log k$ flow of the Liouville
vector field $Z$, which is conformally symplectic and rescales the $r$ coordinate by a factor of $k$,
conjugates time $k$ and time 1 trajectories of $X_H$. Denoting this flow by $\psi^k$, we have a
natural isomorphism \begin{equation}\label{eq:rescale_trick}
CW(L_0,L_k;H,J)\cong CW(\psi^k(L_0),\psi^k(L_k);
k^{-1} (\psi^k)^*H, \psi^k_* J),\end{equation} and since $k^{-1}(\psi^k)^*H=kH$
at infinity, there is a well-defined continuation map from
$CW(L_0,L_k;kH,J)$ to the latter complex. (This is easiest when the
Lagrangians under consideration are globally invariant under the Liouville
flow, as will be the case for our main examples; in general $\psi^k(L_i)$
differs from $L_i$ by a compactly supported Hamiltonian isotopy, which is
annoying but does not pose any technical difficulties.)
However, to ensure that the $A_\infty$-relations hold, 
the continuation homotopy should be incorporated directly into \eqref{eq:perthol},
making the Hamiltonians, almost-complex structures, and boundary conditions 
depend on $s$ so that the solutions converge at $s\to -\infty$ to
generators of the right-hand side of \eqref{eq:rescale_trick}
rather than $CW(L_0,L_k;kH,J)$; see \cite{AbGenerate}.

Assuming the Lagrangians under consideration are invariant under the
Liouville flow, we can use the same trick as above to recast 
\eqref{eq:perthol} as an unperturbed Cauchy-Riemann equation with respect to 
a different almost-complex structure. For instance, given generators
$x_1$ of $CW(L_0,L_1)$, $x_2$ of $CW(L_1,L_2)$, and $y$ of $CW(L_0,L_2)$
(viewed as points of $\phi_H^1(L_i)\cap L_j$),
the coefficient of $y$ in $\mu^2(x_2,x_1)$ can be viewed as a count of
pseudo-holomorphic discs with boundary on $\phi_H^2(L_0)$,
$\phi_H^1(L_1)$, and $L_2$, whose strip-like ends converge to
$\phi_H^1(x_1)\in \phi_H^2(L_0)\cap \phi_H^1(L_1)$, $x_2\in
\phi_H^1(L_1)\cap L_2$, and the inverse image $\tilde{y}\in \phi_H^2(L_0)\cap
L_2$ of $y$ under the rescaling map $\psi^2$. See \S \ref{ss:cyl_calc} for
an example.

\subsection{Construction by localization}\label{ss:wrap_loc}

A different setup for wrapped Floer theory, which is especially useful for 
comparisons with Fukaya-Seidel categories and for the construction of
restriction functors, uses finite wrapping and
localization with respect to certain continuation morphisms. The
construction we sketch here lies somewhere in between the original 
one due to Abouzaid-Seidel \cite{AS} and the works in progress by Abouzaid-Seidel
and Abouzaid-Ganatra \cite{AS2,AG}.

We now consider a Hamiltonian $h$ which has linear growth. For instance,
one could require that $h=r$ at infinity. However, when the contact boundary
(or part thereof) comes equipped with an open book structure or more
generally a compatible $S^1$-valued projection (such as
that induced by a Lefschetz fibration on its vertical boundary), it is often
advantageous to tweak the setup in order to arrange for the time $t$ flow
generated by $h$ to wrap ``by $t$ turns''. In any case, 
the time $t$ flow $\phi_h^t$ preserves the class of Lagrangians which are
conical at infinity. 

For every pair of objects $L_0,L_1$ under consideration, we assume that the set of 
times $t$ for which $L_0^t=\phi_h^t(L_0)$ and $L_1$ fail to intersect 
transversely is discrete. The Floer complex $CF(L_0^t,L_1)$ can be thought
of as a truncation of the previously considered wrapped Floer complex, where
the generators in the cylindrical end correspond only to Reeb chords of length at most $t$ from $L_0$ to
$L_1$ at the contact boundary.

For $\tau>0$ sufficiently small, $HF(L_0^{t+\tau},L_0^t)$ contains a
distinguished element, called ``quasi-unit'', generally defined via
Floer continuation (in the simplest cases it is the sum of the generators of the
Floer complex which correspond to the minima of $h$ on $L_0$).
The wrapped Fukaya category is then defined by localization with respect
to the class of quasi-units \cite{AS2,AG}; at the level of cohomology,
this means that \begin{equation}\label{eq:directlimit}HW(L_0,L_1):=\varinjlim\limits_{t\to\infty}
HF(L_0^t,L_1),\end{equation} where the Floer cohomology groups $HF(L_0^t,L_1)$ form
a direct system in which the connecting maps are given by multiplication with quasi-units. 
The chain-level construction of the quotient $A_\infty$-category is rather
cumbersome in general, and tends to be explicitly computable only in situations where
the continuation maps end up being chain-level isomorphisms or inclusions of
complexes for sufficiently large $t$. (See \cite{AS} for a more geometric
approach to the construction of the direct limit at chain level via
continuation maps.) 

The comparison between the two versions of wrapped Floer theory is well
beyond the scope of this survey. We simply note that, since $H$ grows faster
than $h$ at infinity, there are well-defined continuation maps from the
Floer complexes with linear Hamiltonians to those with quadratic
Hamiltonians; these chain maps are compatible with the quasi-units, 
and induce maps from the direct limit \eqref{eq:directlimit} to the
wrapped Floer cohomology defined in the previous section. The reverse
direction can be constructed by filtering the wrapped Floer complex by
action (i.e., length of Reeb chords) and approximating $H$ over arbitrarily
large portions of the cylindrical ends with linear-growth Hamiltonians.

\subsection{First examples: $\C^*$ and $(\C^*)^n$}\label{ss:cyl_calc}

As a warm-up, we consider $X=\C^*$, identified with $\R\times S^1$ via 
$z=\exp(r+i\theta)$, with the symplectic form
$\omega=dr\wedge d\theta=d\lambda$ where $\lambda=r\,d\theta$ is the
standard Liouville form of the cotangent bundle $T^*S^1$; the Liouville
vector field is $Z=r\partial_r$.
We view $X$ as the completion of $X^{in}=[-1,1]\times S^1$, whose boundary
$\partial X^{in}=\{\pm 1\}\times S^1$ carries the contact form
$\alpha=\lambda_{|\partial X^{in}}=\pm d\theta$, and identify $X\setminus X^{in}$
with the positive symplectization $(1,+\infty)\times \partial X^{in}$.
(We abusively denote by $r$ both the real coordinate on the whole space $X$
and the real positive coordinate on the symplectization of $\partial
X^{in}$, which is in fact $|r|$).

We calculate the wrapped Floer cohomology of $L_0=\R\times \{1\}\subset 
\R\times S^1$ (i.e., the real positive axis of $\C^*$).
The time 1 flow of the quadratic Hamiltonian $H=\frac12 r^2$ is given by
$\phi_H^1(r,\theta)=(r,\theta+r)$, and the generators of
the wrapped Floer complex $CW(L_0,L_0)$, i.e.\ the points of 
$\phi_H^1(L_0)\cap L_0$, are evenly spaced along the real axis (at integer
values of $r$); we
accordingly label them by integers: $\mathcal{X}(L_0,L_0)=\{x_i, i\in \Z\}$.
The generator $x_0$ which lies
at $r=0$ (the minimum of $H$) is an interior intersection point, whereas
the other generators $x_i$, $i\neq 0$ correspond to Reeb chords 
(in one cylindrical end or the other depending on the sign of $i$).

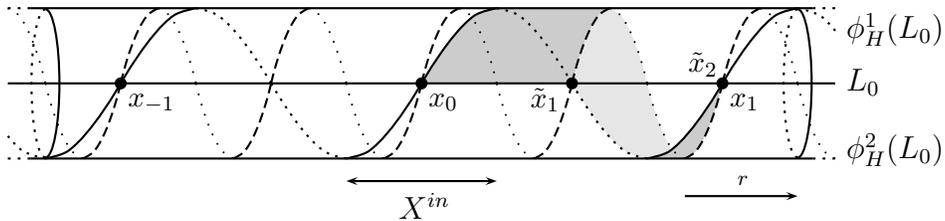
\begin{figure}[b]
\setlength{\unitlength}{1cm}
\begin{picture}(11,2.5)(-4.5,-1.5)
\psset{unit=\unitlength,dash=1pt 2.5pt}
\newgray{gray30}{0.8}
\newgray{gray15}{0.9}
\pscurve[linestyle=none,fillstyle=solid,fillcolor=gray15]%
 (3,-1)(2.7,-0.9)(2,0)(2,0)(1.3,0.9)(1,1)(1,1)(1,1)(2.5,1)(2.5,1)(2.5,1)%
 (2.65,0.9)(3,0)(3,0)(3.35,-0.9)(3.5,-1)
\pscurve[linestyle=none,fillstyle=solid,fillcolor=gray30]%
 (3,-1)(3.3,-0.9)(4,0)(4,0)(4,0)(3.65,-0.9)(3.5,-1)
\pscurve[linestyle=none,fillstyle=solid,fillcolor=gray30]%
 (0,0)(0.7,0.9)(1,1)(1,1)(1,1)(2.5,1)(2.5,1)(2.5,1)%
 (2.35,0.9)(2,0)
\psline(-5.2,-1)(5.2,-1)
\psline(-5.2,1)(5.2,1)
\psline[linestyle=dashed](-5.5,-1)(-5.2,-1)
\psline[linestyle=dashed](-5.5,1)(-5.2,1)
\psline[linestyle=dashed](5.5,-1)(5.2,-1)
\psline[linestyle=dashed](5.5,1)(5.2,1)
\psellipticarc(-5,0)(0.2,1){-90}{90}
\psellipticarc[linestyle=dashed](-5,0)(0.2,1){90}{270}
\psellipticarc(5,0)(0.2,1){-90}{90}
\psellipticarc[linestyle=dashed](5,0)(0.2,1){90}{270}
\put(-0.3,-1.8){$X^{in}$}
\psline{<->}(-1,-1.3)(1,-1.3)
\psline{->}(3.5,-1.5)(5,-1.5)
\put(4.2,-1.35){\tiny $r$}
\psline(-5.5,0)(5.5,0)
\put(5.65,-0.1){$L_0$}
\pscurve[linestyle=dashed](-5,-1)(-5.3,-0.9)(-5.5,-0.7)
\pscurve(-4,0)(-4.7,-0.9)(-5,-1)
\pscurve(-4,0)(-3.3,0.9)(-3,1)
\pscurve[linestyle=dashed](-2,0)(-2.7,0.9)(-3,1)
\pscurve[linestyle=dashed](-1,-1)(-1.3,-0.9)(-2,0)
\pscurve(-1,-1)(-0.7,-0.9)(0,0)
\pscurve(0,0)(0.7,0.9)(1,1)
\pscurve[linestyle=dashed](1,1)(1.3,0.9)(2,0)
\pscurve[linestyle=dashed](2,0)(2.7,-0.9)(3,-1)
\pscurve(3,-1)(3.3,-0.9)(4,0)
\pscurve(4,0)(4.7,0.9)(5,1)
\pscurve[linestyle=dashed](5,1)(5.3,0.9)(5.5,0.7)
\put(5.65,0.6){$\phi_H^1(L_0)$}
\pscircle*(0,0){0.08}
\pscircle*(4,0){0.08}
\pscircle*(-4,0){0.08}
\put(0.1,-0.3){\small $x_0$}
\put(4.1,-0.3){\small $x_1$}
\put(-3.9,-0.3){\small $x_{-1}$}
\pscurve[linestyle=dotted](-5,0)(-5.35,0.9)(-5.5,1)
\pscurve[linestyle=dotted](-4.5,-1)(-4.65,-0.9)(-5,0)
\pscurve[linestyle=dashed,dash=4pt 2pt](-4,0)(-4.35,-0.9)(-4.5,-1)
\pscurve[linestyle=dashed,dash=4pt 2pt](-4,0)(-3.65,0.9)(-3.5,1)
\pscurve[linestyle=dotted](-3,0)(-3.35,0.9)(-3.5,1)
\pscurve[linestyle=dotted](-2.5,-1)(-2.65,-0.9)(-3,0)
\pscurve[linestyle=dashed,dash=4pt 2pt](-2,0)(-2.35,-0.9)(-2.5,-1)
\pscurve[linestyle=dashed,dash=4pt 2pt](-2,0)(-1.65,0.9)(-1.5,1)
\pscurve[linestyle=dotted](-1,0)(-1.35,0.9)(-1.5,1)
\pscurve[linestyle=dotted](-0.5,-1)(-0.65,-0.9)(-1,0)
\pscurve[linestyle=dashed,dash=4pt 2pt](-0.5,-1)(-0.35,-0.9)(0,0)
\pscurve[linestyle=dashed,dash=4pt 2pt](0,0)(0.35,0.9)(0.5,1)
\pscurve[linestyle=dotted](0.5,1)(0.65,0.9)(1,0)
\pscurve[linestyle=dotted](1,0)(1.35,-0.9)(1.5,-1)
\pscurve[linestyle=dashed,dash=4pt 2pt](1.5,-1)(1.65,-0.9)(2,0)
\pscurve[linestyle=dashed,dash=4pt 2pt](2,0)(2.35,0.9)(2.5,1)
\pscurve[linestyle=dotted](2.5,1)(2.65,0.9)(3,0)
\pscurve[linestyle=dotted](3,0)(3.35,-0.9)(3.5,-1)
\pscurve[linestyle=dashed,dash=4pt 2pt](3.5,-1)(3.65,-0.9)(4,0)
\pscurve[linestyle=dashed,dash=4pt 2pt](4,0)(4.35,0.9)(4.5,1)
\pscurve[linestyle=dotted](4.5,1)(4.65,0.9)(5,0)
\pscurve[linestyle=dotted](5,0)(5.35,-0.9)(5.5,-1)
\put(5.65,-1){$\phi_H^2(L_0)$}
\pscircle*(2,0){0.08}
\put(1.45,-0.32){\small $\tilde{x}_1$}
\put(3.55,0.15){\small $\tilde{x}_2$}
\end{picture}
\caption{The wrapped Floer cohomology of $L_0=\R\times \{1\}$ in $X=\R\times S^1$.}
\label{fig:wrapcyl}
\end{figure}

There is a natural grading on $CW^*(L_0,L_0)$ (using the ``obvious''
trivialization of $TX$), for which the generators $x_i$ all
have degree zero. This implies immediately that the Floer differential
$\mu^1$ and the higher products $\mu^k$ for $k\ge 3$ vanish identically.
The vanishing of the differential can also be checked on Figure
\ref{fig:wrapcyl}: it is readily apparent that $L_0$ and $\phi_H^1(L_0)$ do
not bound any non-trivial pseudo-holomorphic strips. (Recall that,
in complex dimension 1, regardless of the almost-complex structure, rigid pseudo-holomorphic curves
correspond to immersed polygons with locally convex boundary.)

Since $L_0$ is invariant under the Liouville flow (which rescales
the $r$ coordinate), we can use the trick described at the end of \S
\ref{ss:wrap_quadratic} and view the product $\mu^2$ on the wrapped Floer
complex as a count of  pseudo-holomorphic discs with boundary on
$\phi_H^2(L_0)$, $\phi_H^1(L_0)$, and $L_0$. It is easy to check
that, for any $i,j\in \Z$, $\phi_H^1(x_i)\in
\phi_H^2(L_0)\cap \phi_H^1(L_0)$ and $x_j\in \phi_H^1(L_0)\cap L_0$ are
the vertices of a unique immersed triangle, whose third vertex
$\tilde{x}_{i+j}\in \phi_H^2(L_0)\cap L_0$ is mapped to $x_{i+j}\in
\phi_H^1(L_0)\cap L_0$ under the Liouville rescaling $r\mapsto 2r$. 
Hence, \begin{equation}\label{eq:cyl_mu2}\mu^2(x_j,x_i)=x_{i+j}.\end{equation}
For instance, the triangle shown on Figure \ref{fig:wrapcyl} contributes to $\mu^2(x_0,x_1)=x_1$. 

Renaming the generator $x_j$ to $x^j$, we conclude that
\begin{equation}\label{eq:HWC*}HW^*(L_0,L_0)\simeq \C[x,x^{-1}]\end{equation}
as algebras (or in fact as $A_\infty$-algebras with $\mu^k=0$ for $k\neq 2$).

Instead of the quadratic Hamiltonian $H$, one could instead use the
approach of \S \ref{ss:wrap_loc} with linear Hamiltonians. The resulting
picture looks like a truncation of Figure \ref{fig:wrapcyl}: the
angular coordinate $\theta$ increases from $-t$ at one end of
$L_0^t=\phi_h^t(L_0)$ to $+t$ at the other end, so $CF(L_0^t,L_0)$ 
only accounts for the generators $x_i$ with $|i|<t$. Taking the limit as
$t\to \infty$, one recovers \eqref{eq:HWC*}.

Either way, we find that $HW^*(L_0,L_0)$ is isomorphic to $\mathrm{Ext}^*(\O,\O)\simeq
\C[x,x^{-1}]$ for the structure sheaf on the mirror $X^\vee=\C^*=\mathrm{Spec}\,\C[x,x^{-1}]$.

By a result of Abouzaid, 
$L_0$ generates the wrapped Fukaya
category $\W(X)$: this means that every object is quasi-isomorphic to an
iterated mapping cone built from (finitely many) copies of $L_0$. (A weaker
notion, that of ``split-generation'', adds formal direct summands in such
iterated mapping cone; it is not needed here). Similarly, the structure
sheaf $\O$ generates $\mathrm{Coh}(X^\vee)$.
By standard homological algebra, this implies that there is a derived 
equivalence between the wrapped Fukaya category of $X=\R\times S^1$ and 
the category of coherent sheaves on $X^\vee=\C^*$.

In fact, $\W(X)$ and $D^b\mathrm{Coh}(X^\vee)$ are both
equivalent to the category of perfect complexes of modules over the algebra
$\mathcal{A}=\C[x,x^{-1}]$, via Yoneda embedding: on the symplectic side, the
$A_\infty$-module associated to an object $\Theta\in\W(X)$ is the wrapped
Floer complex $CW^*(L_0,\Theta)$ viewed as an $A_\infty$-module over $CW^*(L_0,L_0)$
(where the structure maps of the module are induced by those of
the wrapped Fukaya category), while on the mirror side, the module structure
just comes from multiplication by regular functions.

The argument extends in a straightforward manner to the case
of $X=(\C^*)^n\simeq T^*T^n$, whose wrapped Fukaya category is generated by
$L_0=(\R_+)^n$. Indeed, the standard quadratic
Hamiltonian ($H=\frac12 \sum r_i^2$, where $r_i=\log |z_i|$) preserves the
product structure, and holomorphic triangles with boundary on
$\phi_H^2(L_0)$, $\phi_H^1(L_0)$ and $L_0$ can be studied by projecting to
each coordinate. One finds that
\begin{equation}
HW^*(L_0,L_0)=CW^*(L_0,L_0)\simeq \C[x_1^{\pm 1},\dots,x_n^{\pm 1}],
\end{equation}
which agrees with the ring of functions of the mirror
$X^\vee=(\C^*)^n$. Viewing $\W((\C^*)^n)$ and $D^b\Coh((\C^*)^n)$ in terms of perfect
complexes of
modules over $\C[x_1^{\pm 1},\dots,x_n^{\pm 1}]$, homological mirror 
symmetry follows.
 
\section{Example: the pair of pants}\label{s:pants1}

In this section, we consider the pair of 
pants $X=\PP^1\setminus \{0,-1,\infty\}=\C^*\setminus \{-1\}$.
Homological mirror symmetry for this example has been studied in
\cite{AAEKO}; we review the results of that paper from a slightly different
perspective.

The details of the Liouville structure on $X$ are not particularly
important, except as a warmup for the general setup considered in the
following sections. Viewing $X$ as the hypersurface in $(\C^*)^2$ defined by
the equation $x_1+x_2+1=0$, we use the Liouville structure induced by that of
$(\C^*)^2$. Namely, writing $x_j=\exp(r_j+i\theta_j)$, we set
$\omega=\frac12 dd^c(r_1^2+r_2^2)$, and the Liouville form is
$\lambda=\frac12 d^c(r_1^2+r_2^2)=r_1\,d\theta_1+r_2\,d\theta_2$. (In terms of the coordinate
$z$ on $\C^*\setminus \{-1\}$, $r_1=\log |z|$ and $r_2=\log |z+1|$.)

One could instead view $X$ as the complement of the hypersurface defined by
$f(z)=z+1=0$ in $\C^*$. A natural choice of K\"ahler potential is then
$\frac12(\log |z|)^2+\frac12(\log |f|)^2$, which gives exactly the same
formula. However, we will often prefer to modify this prescription, using
cut-off functions so
that the K\"ahler potential is in fact equal to $\frac12(\log |f|)^2$ near $-1$
and to $\frac12(\log |z|)^2$ outside of a neighborhood of $-1$. This offers the advantage that
the Liouville structure is the same as that of $\C^*$ outside of a neighborhood
of the deleted hypersurface, and ``standard'' near the hypersurface.
Likewise, the Hamiltonian used to define the wrapped Fukaya category of
$X$ can be chosen to coincide with that used for $\C^*$ away from
a neighborhood of~$-1$.

In the same vein, when viewing $X$ as the complement of a hypersurface in
$\C^*$ the most natural choice of gradings in Floer theory uses
the trivialization of the tangent bundle induced by that of $\C^*$. 
This means that the puncture
at $-1$ is graded differently from those at $0$ and $\infty$, across which
the trivialization does not extend.

\begin{figure}[b]
\setlength{\unitlength}{1cm}
\begin{picture}(11,2.5)(-4.5,-1.5)
\psset{unit=\unitlength,dash=1pt 2.5pt}
\newgray{gray30}{0.8}
\newgray{gray15}{0.9}
\pscurve[linestyle=none,fillstyle=solid,fillcolor=gray15]%
 (3,-1)(2.7,-0.9)(2,0)(2,0)(1.3,0.9)(1,1)(1,1)(1,1)(2.5,1)(2.5,1)(2.5,1)%
 (2.65,0.9)(3,0)(3,0)(3.35,-0.9)(3.5,-1)
\pscurve[linestyle=none,fillstyle=solid,fillcolor=gray30]%
 (3,-1)(3.3,-0.9)(4,0)(4,0)(4,0)(3.65,-0.9)(3.5,-1)
\pscurve[linestyle=none,fillstyle=solid,fillcolor=gray30]%
 (0,0)(0.7,0.9)(1,1)(1,1)(1,1)(2.5,1)(2.5,1)(2.5,1)%
 (2.35,0.9)(2,0)
\psline(-5.2,-1)(-0.2,-1)
\psline(1.2,-1)(5.2,-1)
\psline(-5.2,1)(5.2,1)
\pscurve(-0.2,-1)(0,-1)(0.2,-1.3)(0.2,-1.6)
\pscurve(1.2,-1)(1,-1)(0.8,-1.3)(0.8,-1.6)
\psellipticarc(0.5,-1.5)(0.3,0.1){-180}{0}
\psellipticarc[linestyle=dashed](0.5,-1.5)(0.3,0.1){0}{180}
\put(1,-1.5){\small $-1$}
\psline[linestyle=dashed](-5.5,-1)(-5.2,-1)
\psline[linestyle=dashed](-5.5,1)(-5.2,1)
\psline[linestyle=dashed](5.5,-1)(5.2,-1)
\psline[linestyle=dashed](5.5,1)(5.2,1)
\psellipticarc(-5,0)(0.2,1){-90}{90}
\psellipticarc[linestyle=dashed](-5,0)(0.2,1){90}{270}
\psellipticarc(5,0)(0.2,1){-90}{90}
\psellipticarc[linestyle=dashed](5,0)(0.2,1){90}{270}
\psline(-5.5,0)(5.5,0)
\put(5.65,-0.1){$L_0$}
\pscurve[linestyle=dashed](-5,-1)(-5.3,-0.9)(-5.5,-0.7)
\pscurve(-4,0)(-4.7,-0.9)(-5,-1)
\pscurve(-4,0)(-3.3,0.9)(-3,1)
\pscurve[linestyle=dashed](-2,0)(-2.7,0.9)(-3,1)
\pscurve[linestyle=dashed](-1,-1)(-1.3,-0.9)(-2,0)
\pscurve(-1,-1)(-0.7,-0.9)(0,0)
\pscurve(0,0)(0.7,0.9)(1,1)
\pscurve[linestyle=dashed](1,1)(1.3,0.9)(2,0)
\pscurve[linestyle=dashed](2,0)(2.7,-0.9)(3,-1)
\pscurve(3,-1)(3.3,-0.9)(4,0)
\pscurve(4,0)(4.7,0.9)(5,1)
\pscurve[linestyle=dashed](5,1)(5.3,0.9)(5.5,0.7)
\put(5.65,0.6){$\phi_H^1(L_0)$}
\pscircle*(0,0){0.08}
\pscircle*(4,0){0.08}
\pscircle*(-4,0){0.08}
\put(0.1,-0.3){\small $x_0$}
\put(4.1,-0.3){\small $x_1$}
\put(-3.9,-0.3){\small $x_{-1}$}
\pscurve[linestyle=dotted](-5,0)(-5.35,0.9)(-5.5,1)
\pscurve[linestyle=dotted](-4.5,-1)(-4.65,-0.9)(-5,0)
\pscurve[linestyle=dashed,dash=4pt 2pt](-4,0)(-4.35,-0.9)(-4.5,-1)
\pscurve[linestyle=dashed,dash=4pt 2pt](-4,0)(-3.65,0.9)(-3.5,1)
\pscurve[linestyle=dotted](-3,0)(-3.35,0.9)(-3.5,1)
\pscurve[linestyle=dotted](-2.5,-1)(-2.65,-0.9)(-3,0)
\pscurve[linestyle=dashed,dash=4pt 2pt](-2,0)(-2.35,-0.9)(-2.5,-1)
\pscurve[linestyle=dashed,dash=4pt 2pt](-2,0)(-1.65,0.9)(-1.5,1)
\pscurve[linestyle=dotted](-1,0)(-1.35,0.9)(-1.5,1)
\pscurve[linestyle=dotted](-0.5,-1)(-0.65,-0.9)(-1,0)
\pscurve[linestyle=dashed,dash=4pt 2pt](-0.5,-1)(-0.35,-0.9)(0,0)
\pscurve[linestyle=dashed,dash=4pt 2pt](0,0)(0.35,0.9)(0.5,1)
\pscurve[linestyle=dotted](0.5,1)(0.65,0.9)(1,0)
\pscurve[linestyle=dotted](1,0)(1.35,-0.9)(1.5,-1)
\pscurve[linestyle=dashed,dash=4pt 2pt](1.5,-1)(1.65,-0.9)(2,0)
\pscurve[linestyle=dashed,dash=4pt 2pt](2,0)(2.35,0.9)(2.5,1)
\pscurve[linestyle=dotted](2.5,1)(2.65,0.9)(3,0)
\pscurve[linestyle=dotted](3,0)(3.35,-0.9)(3.5,-1)
\pscurve[linestyle=dashed,dash=4pt 2pt](3.5,-1)(3.65,-0.9)(4,0)
\pscurve[linestyle=dashed,dash=4pt 2pt](4,0)(4.35,0.9)(4.5,1)
\pscurve[linestyle=dotted](4.5,1)(4.65,0.9)(5,0)
\pscurve[linestyle=dotted](5,0)(5.35,-0.9)(5.5,-1)
\put(5.65,-1){$\phi_H^2(L_0)$}
\pscircle*(2,0){0.08}
\put(1.45,-0.32){\small $\tilde{x}_1$}
\put(3.55,0.15){\small $\tilde{x}_2$}
\end{picture}
\caption{The wrapped Floer cohomology of $L_0=\R_+$ in $X=\C^*\setminus \{-1\}$.}
\label{fig:wrappants}
\end{figure}
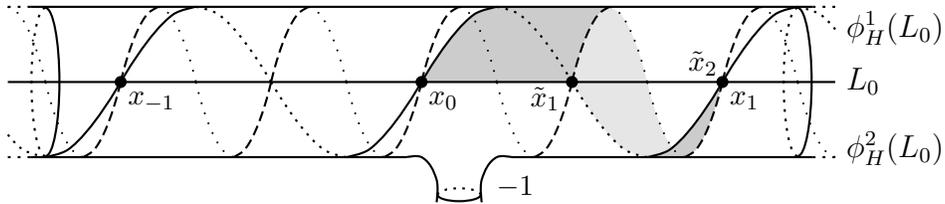

With this understood, we consider again $L_0=\R_+\subset X$.  Since $L_0$
(and its image under the flow generated by $H$) stay away from the puncture at
$z=-1$, the calculation of the wrapped Floer complex closely parallels the
case of the cylinder (cf.\ \S \ref{ss:cyl_calc}). Namely, the generators of
$CW^*(L_0,L_0)$ are still evenly spaced along the real positive axis,
$\mathcal{X}(L_0,L_0)=\{x_i,\, i\in \Z\}$, with $\deg(x_i)=0$, the
operations $\mu^k$ $(k\neq 2)$ vanish for degree reasons, and the
calculation of the product structure $\mu^2$ proceeds as before; see Figure
\ref{fig:wrappants}.
The only difference with the case of the cylinder is that only immersed 
triangles that do not pass through the puncture at $-1$ contribute to
$\mu^2$. Observing that the triangles of Figure \ref{fig:wrapcyl} that pass
through the central region containing $-1$ are exactly those whose inputs
lie in opposite ends of the cylinder, we find that
\begin{equation}
\mu^2(x_j,x_i)=\begin{cases} 
x_{i+j}\quad \text{if}\ ij\ge 0,\\
0 \quad \text{if}\ ij<0.
\end{cases}
\end{equation}
Thus, renaming $x_{-j}$ to $z_1^j$ and $x_{j}$ to $z_2^j$ for $j>0$, we have:
\begin{equation}
HW^*(L_0,L_0)\simeq \C[z_1,z_2]/(z_1z_2=0).
\end{equation}
Denoting this algebra by $\mathcal{A}$, this suggests that a mirror to $X$
might be
\begin{equation}\label{eq:xy=0}
X^\vee=\mathrm{Spec}\,\mathcal{A}=\{(z_1,z_2)\in \C^2\,|\,z_1z_2=0\}.
\end{equation}
This is indeed the case, but not for any obvious reason as far as we know. Indeed,
$L_0$ does not split-generate the wrapped Fukaya category, and it is not
readily evident that the Yoneda functor from $\W(X)$
into the derived category of $A_\infty$-modules over $\mathcal{A}$, given
by $\Theta \mapsto CW^*(L_0,\Theta)$ on objects, is fully faithful and its
image (which does not solely consist of perfect complexes)
agrees with the derived category of coherent sheaves on $X^\vee$.

Observe by the way that, if we view $L_0$ as an object of the {\em
relative} wrapped Fukaya category of $(\C^*,\{-1\})$ in the sense of Seidel, 
i.e.\ if we count holomorphic curves in $\C^*$ which
pass $k$ times through $-1$ with a coefficient of $t^k$, then the
wrapped Floer cohomology of $L_0$ becomes $HW^*(L_0,L_0)\simeq
\C[z_1,z_2,t]/(z_1z_2=t)$. This exhibits the mirror to the complement
$X=\C^*\setminus \{-1\}$, given by \eqref{eq:xy=0}, as the central fiber of a degeneration of the mirror to
$\C^*$. This is a general feature, as noted in the discussion after
Conjecture \ref{conj:hmscompl}.

Returning to our study of the wrapped Fukaya category, 
one can show that $\W(X)$ is (split) generated by the three
components of the real locus of $X$, namely
$L_0=\R_+$, $L_1=(-\infty,-1)$, and $L_2=(-1,0)$. (In fact, any two of these
suffice.) Calculating their wrapped Floer complexes and the product
structures is a simple exercise similar to the above case of $L_0$; the
outcome is as follows (see \cite {AAEKO} for details). First, we have:
\begin{align*}
HW^*(L_0,L_0)&\simeq\C[z_1,z_2]/(z_1z_2=0),\\
HW^*(L_1,L_1)&\simeq\C[z_2,z_0]/(z_2z_0=0),\\
HW^*(L_2,L_2)&\simeq\C[z_1,z_0]/(z_1z_0=0),
\end{align*}
as (formal) graded ($A_\infty$-)algebras. Here we denote by $z_1$ (resp.\ $z_2,z_0$) the generators corresponding to Reeb
chords that wrap once around $0$ (resp.\ $\infty,-1$). With our
choice of trivialization of $TX$, $\deg z_1=\deg z_2=0$, whereas $\deg z_0=2$.
For $i\neq j$, $HW^*(L_i,L_j)$ is an ($A_\infty$) bimodule over
$HW^*(L_i,L_i)$ and $HW^*(L_j,L_j)$. As such it is generated by a single
generator $u_{ij}$ corresponding to a Reeb chord that wraps halfway around
the common cylindrical end, and we have
\begin{align*}
HW^*(L_0,L_1)&\simeq \C[z_2]\,u_{01}, & HW^*(L_1,L_0)&\simeq \C[z_2]\,u_{10},\\
HW^*(L_1,L_2)&\simeq \C[z_0]\,u_{12}, & HW^*(L_2,L_1)&\simeq \C[z_0]\,u_{21},\\
HW^*(L_2,L_0)&\simeq \C[z_1]\,u_{20}, & HW^*(L_0,L_2)&\simeq \C[z_1]\,u_{02},
\end{align*}
with the bimodule structure implied by the notations (any variable not
present in the notation acts by zero), and vanishing higher module maps.
(Here $u_{12}$ and $u_{21}$ have degree 1 and the other generators have 
degree 0.)

Moreover, for $\{i,j,k\}=\{0,1,2\}$ we have
$\mu^2(u_{ji},u_{ij})=z_k$ and $\mu^2(u_{jk},u_{ij})=0$, whereas
$\mu^3(u_{ki},u_{jk},u_{ij})=-\mathrm{id}_{L_i}$. In particular there are
two exact triangles
\begin{equation}
\label{eq:2exacttri}
L_2\stackrel{u_{20}\,}{\longrightarrow} L_0\stackrel{u_{01}\,}{\longrightarrow}
L_1\stackrel{u_{12}\,}{\longrightarrow} L_2[1]\quad \text{and}\quad
L_1\stackrel{u_{10}\,}{\longrightarrow} L_0\stackrel{u_{02}\,}{\longrightarrow}
L_2\stackrel{u_{21}\,}{\longrightarrow} L_1[1].
\end{equation}
It is shown in \cite{AAEKO} that this completely determines the
$A_\infty$-structure up to homotopy.

The corresponding calculation on $X^\vee$ is as follows: we consider the
structure sheaf $\O$, and the structure sheaves $\O_A$ and $\O_B$ of the two
irreducible components of $X^\vee$, $A:\{z_1=0\}$ and $B:\{z_2=0\}$. 
In the language of
modules over $\A=\C[z_1,z_2]/(z_1z_2)$, these correspond to $\A$, $\A/(z_1)$, and $\A/(z_2)$.
To calculate Ext groups between these objects, we use the (infinite,
2-periodic) projective resolution
$$\dots \stackrel{z_1}{\longrightarrow} \O \stackrel{z_2}{\longrightarrow}
\O \stackrel{z_1}{\longrightarrow} \O \longrightarrow \O_A \to 0$$
and similarly for $\O_B$ (exchanging $z_1$ and $z_2$).
For example, applying $\mathrm{Hom}(-,\O_A)$ to this resolution we find that 
$\Ext^*(\O_A,\O_A)$ is given by the cohomology of
$$0\to \A/(z_1)\stackrel{0}{\longrightarrow}\A/(z_1)
\stackrel{z_2}{\longrightarrow}\A/(z_1)\stackrel{0}{\longrightarrow}\dots$$
This gives $\Hom(\O_A,\O_A)\simeq \C[z_2]$, and $\Ext^{2k}(\O_A,\O_A)=\C$ for all
$k\ge 1$. Denoting by $z_0$ the generator of $\Ext^2(\O_A,\O_A)$, further
calculations show that, as an algebra, $\Ext^*(\O_A,\O_A)\simeq
\C[z_2,z_0]/(z_2z_0=0).$  Similarly for the other Ext groups and module
structures; the outcomes match exactly the calculations on the symplectic
side. Moreover, there are two short exact sequences
$$0\to \O_A \to \O \to \O_B \to 0$$
(where the first map is the homomorphism from $\A/(z_1)$ to $\A$ given by
multiplication by $z_2$, and the second map is the projection from $\A$ to
$\A/(z_2)$) and
$$0\to \O_B \to \O \to \O_A \to 0,$$
which give rise to two exact triangles in the derived category. This in turn
suffices to conclude that $\W(X)\simeq D^b\Coh(X^\vee)$.

Given that the pair of pants has a 3-fold symmetry that is not apparent on
the mirror $X^\vee$, it is natural to ask for a more symmetric version of
mirror symmetry. The answer comes in the form of the Landau-Ginzburg model
$(Y=\C^3,W=-z_0z_1z_2)$: namely, $\W(X)$ is also equivalent to the
triangulated category of singularities of $(Y,W)$ \cite{Orlov}, i.e.\ 
the quotient of the derived category of coherent sheaves of the zero
fiber $Z=W^{-1}(0)=\{z_0z_1z_2=0\}$ by the subcategory of perfect complexes,
$D^b_\sg(Z)=D^b\Coh(Z)/\Perf(Z)$. (Since $Z$ is affine, 
$\Perf(Z)$ is generated by $\O_Z$.)

The category $D^b_\sg(Z)$ is generated by $\O_{Z_0},\O_{Z_1},\O_{Z_2}$, where $Z_i$ are
the irreducible components of $Z$, i.e.\ the hyperplanes
$\{z_i=0\}$.
The short exact sequence of sheaves 
$$0\to \O_{Z_0}\to \O_Z\to\O_{Z_1\cup Z_2}\to 0$$ induces an isomorphism
$\O_{Z_1\cup Z_2}\simeq \O_{Z_0}[1]$ in $D^b_\sg(Z)$, and hence we have 
two exact triangles in $D^b_\sg(Z)$, \begin{equation}
\O_{Z_1}\to \O_{Z_0}[1]\to \O_{Z_2} \to \O_{Z_1}[1]
\quad\text{and}\quad
\O_{Z_2}\to \O_{Z_0}[1]\to \O_{Z_1} \to \O_{Z_2}[1].
\end{equation}
Note that the natural grading on $D^b_\sg(Z)$ is only by $\Z/2$.
($\Z$-gradings manifestly exist in this instance, but break the symmetry
between the coordinates.) 
The most efficient method of computation of morphisms in $D^b_\sg(Z)$ is
via 2-periodic resolutions, as 
\begin{equation}\label{eq:hom_is_ext}
\Hom^i_{D^b_\sg(Z)}(\mathcal{E}_1,\mathcal{E}_2)\cong 
\Ext^{2k+i}_{D^b\Coh(Z)}(\mathcal{E}_1,\mathcal{E}_2)\qquad \text{for $k\gg 0$.}
\end{equation}
(see Proposition 1.21 of \cite{Orlov}).

The calculations for the generators $\O_{Z_i}$ are carried out in
\cite{AAEKO}, and yield the same answers as the corresponding calculations
in $D^b\Coh(X^\vee)$ and $\W(X)$. 
Hence, we have equivalences $\W(X)\simeq D^b\Coh(X^\vee)\simeq
D^b_{\sg}(Z)$, under which the generators $L_0,L_1,L_2$ of $\W(X)$ correspond to
$\O,\O_A,\O_B$ in $D^b\Coh(X^\vee)$, and to $\O_{Z_0}[1],\O_{Z_1},\O_{Z_2}$
in $D^b_\sg(Z)$.

The equivalence between $D^b\Coh(X^\vee)$ and $D^b_\sg(Z)$ is a special case
of Orlov's Kn\"orrer periodicity result \cite{Orlov,OrlovKnorrer}.  

From our
perspective, the Landau-Ginzburg model
$(\C^3,-z_0z_1z_2)$ is the mirror to the pair of pants viewed as a
hypersurface $x_1+x_2+1=0$ in $(\C^*)^2$, and the equivalence $\W(X)\simeq
D^b_\sg(Z)$ is an instance of Conjecture \ref{conj:hms}. Meanwhile,
$X^\vee=\{z_1z_2=0\}\subset\C^2$ is the zero fiber of the
Landau-Ginzburg model $(\C^2,-z_1z_2)$, which our construction
associates to $\{-1\}$ viewed as a hypersurface in $\C^*$. Thus, viewing the pair
of pants as the complement $\C^*\setminus \{-1\}$, we are now in the setting
of Conjecture \ref{conj:hmscompl}, and the equivalence
$\W(X)\simeq D^b\Coh(X^\vee)$ is the top row of the diagram \eqref{eq:hmscompl}.
The bottom row is the
equivalence $\W(\{-1\})\simeq D^b_\sg(X^\vee)$, which maps the generator
(the Lagrangian consisting of the point itself) to the
generator of $D^b_\sg(X^\vee)\ (\simeq D^b\mathrm{Vect})$.
It is then easy to check that the diagram
\eqref{eq:hmscompl} commutes. Indeed,
the restriction functor $\rho$ maps $L_0$ (which avoids the puncture
at $-1$) to the zero object, while $L_1$ and $L_2$ (which each have one end 
at $-1$) map to the generator of $\W(\{-1\})$ and its shift by one.
This is in agreement with the images of $\O$, $\O_A$ and $\O_B$ under the
quotient functor $D^b\Coh(X^\vee)\to D^b_\sg(X^\vee)$.

\section{The complement of $H$ and the restriction functor $\rho$}\label{s:hmscompl}
Let $H=f^{-1}(0)\subset (\C^*)^n$ be a smooth algebraic hypersurface as in
the introduction. 
The standard Liouville 
structure on $H$ is induced by that of $(\C^*)^n$ as
follows. Expressing the coordinates on $(\C^*)^n$ in the form $x_j=\exp(r_j+i\theta_j)$,
 the standard K\"ahler form
of $(\C^*)^n$ is given by the K\"ahler potential $\Phi=\frac12\sum r_j^2$, i.e.\
$\omega=dd^c\Phi$, and the standard Liouville form is $\lambda=d^c\Phi=
\sum r_j\,d\theta_j$. The K\"ahler potential, symplectic form and Liouville form of $H$ are
then simply the restrictions of $\Phi$, $\omega$ and $\lambda$ to $H$, given 
by the same formulas in coordinates. (However, one may also choose a
different Liouville structure in the same deformation class as convenient
for calculations.)

The natural choice of Liouville structure on the complement
$(\C^*)^n\setminus H$ is given by the K\"ahler potential
$\hat\Phi=\Phi + \frac12(\log |f|)^2$, i.e.\ the Liouville form is
$\hat\lambda=d^c\hat\Phi=\sum r_j\,d\theta_j+\log |f|\,d\arg(f)$.
Observing that $(\C^*)^n\setminus H$ is isomorphic to the
hypersurface in $(\C^*)^{n+1}$ defined by $f(x_1,\dots,x_n)+x_{n+1}=0$,
the K\"ahler potential $\hat\Phi$ and Liouville form $\hat\lambda$ are
exactly those induced by the standard choices on $(\C^*)^{n+1}$.

In order to construct the restriction functor $\rho:\W((\C^*)^n\setminus H)
\to \W(H)$, it is advantageous to deform the Liouville structure (which
does not modify the wrapped Fukaya category) in order to make it apparent that
$(\C^*)^n\setminus H$ contains a Liouville subdomain equivalent to the
product of $H$ with a punctured disc $\mathbb{D}^*$. Namely, for $K\gg 0$
sufficiently large, the potential $\hat\Phi_K=\Phi+\frac12(\log |f|+K)^2$
defines the same K\"ahler form on $(\C^*)^n\setminus H$ (since $dd^c\log |f|=0$), but the
corresponding Liouville form is $\hat\lambda_K=\hat\lambda+K\,d\,\arg(f)$
and the Liouville vector fields differ by $K\,\nabla\log |f|$. Thus,
for $K$ sufficiently large, the Liouville vector field of $\hat\lambda_K$
is transverse and outward pointing along arbitrarily large compact subsets
of the hypersurface $|f|=\epsilon$ (for fixed $\epsilon$ with 
$e^{-K}\ll \epsilon\ll 1$).  We note that this modification amounts to
rescaling $f$ to $e^K f$.

With this understood, intersecting the subset of $(\C^*)^n\setminus H$ 
where $|f|<\epsilon$ with a large compact subset of $(\C^*)^n$ defines a Liouville
subdomain which is a topologically trivial fibration over the punctured
disc, and Liouville deformation equivalent to the product of 
$H^{in}\subset H$ with a punctured disc.  The completion of this subdomain is
(up to Liouville deformation equivalence) $H\times \C^*$.
In this setting, the work of Abouzaid and Seidel~\cite{AS} yields a
restriction functor
\begin{equation}
r:\W((\C^*)^n\setminus H)\to \W(H\times \C^*).
\end{equation}

The diagram of Conjecture \ref{conj:hmscompl} relies on the use of a particular $\Z$-grading on
$\W((\C^*)^n\setminus H)$, defined by a choice of trivialization of the
determinant line bundle of the tangent bundle of $(\C^*)^n\setminus H$.
We use the trivialization obtained by restricting to the complement of
$H$ the standard trivialization for $(\C^*)^n$. Hence, the $\Z$-grading
that we consider on $\W(H\times \C^*)$ is not the
``usual'' one, but rather comes from a trivialization that extends to
$H\times\C$, which shifts by $2k$ the degree of Reeb chords that 
wrap $k$ times around the origin in $\C^*$ (whereas in the $H$ factor we have the
trivialization induced by that of $(\C^*)^n$ via interior product with $df$).

The second step in the construction of $\rho$ is to define
a $\Z/2$-graded ``projection'' functor
$p:\W(H\times \C^*)\to \W(H)$ as an adjoint to the
inclusion $i:\W(H)\to \W(H\times \C^*)$ which maps $\ell$ to
$i(\ell)=\ell\times \R_+$.
Given any two objects $\ell_1,\ell_2\in \W(H)$, we have
\begin{equation}\label{eq:CWtensor}
CW^*(\ell_1\times \R_+,\ell_2\times \R_+)\cong CW^*(\ell_1,\ell_2)\otimes_\C
\C[z^{\pm 1}],\end{equation}
where $\deg(z)=2$, with all $A_\infty$-operations
extended linearly. This allows us to define $i$ on morphisms by $i(x)=x\otimes
1$; the higher terms vanish. 
We first define a version of $p$ which takes values in a module category,
$$\hat{p}:\W(H\times \C^*)\to \text{mod-}\W(H).$$
Given an object $L$ of $\W(H\times\C^*)$, the $\W(H)$-module $\hat{p}(L)$
associates to $\ell \in \W(H)$ the chain complex
$\hat{p}(L)(\ell):=CW^*(\ell\times \R_+,L)$. The structure maps of the
$A_\infty$-module $\hat{p}(L)$ come from the $A_\infty$-operations in $\W(H\times\C^*)$ 
(via the inclusion $i$). The definition of $\hat{p}$ on morphisms is
tautological and parallels the construction of the Yoneda embedding.

Given $\ell\in \W(H)$, the cohomological unit $e_\ell\in CW^0(\ell,\ell)$
gives rise to a degree 2 automorphism $e_\ell\otimes z\in CW^2(\ell\times \R_+,
\ell\times \R_+)$. In particular, multiplication by $e_\ell\otimes z$ induces
quasi-isomorphisms
\begin{equation}\label{eq:CWperiodic}
CW^*(\ell\times \R_+,L)\stackrel{\simeq}{\longrightarrow}
CW^{*+2}(\ell\times \R_+,L)
\end{equation}
for all $\ell$ and $L$, so that the modules
$\hat{p}(L)$ are 2-periodic. 

Identifying the graded pieces of $\hat{p}(L)$ of given parity
via the quasi-isomorphisms \eqref{eq:CWperiodic}, we arrive at a
$\Z/2$-graded module that we denote by $\bar{p}(L)$. In fact, $\hat{p}$
induces a functor
$\bar{p}$ from $\W(H\times \C^*)$ to a category of $\Z/2$-graded
modules over $\W(H)$. Constructing $\bar{p}$ carefully involves a significant
amount of work; conceptually, the key point
is that the isomorphisms $e_\ell \otimes z$ are part of a natural
transformation from the identity functor of $\W(H\times\C^*)$ to the shift functor $[2]$ induced by rotation
of the $\C^*$ factor.

Next, we observe that the wrapped Fukaya category of $H\times \C^*$ is
split-generated by products $\ell\times \R_+$. 
Moreover, the isomorphism \eqref{eq:CWtensor} implies that,
as a $\Z/2$-graded module, $\bar{p}(\ell\times \R_+)$ 
is isomorphic to the Yoneda module of $\ell$.
It follows that $\bar{p}$ is representable, i.e.\ 
there is a functor
\begin{equation}
p:\W(H\times \C^*)\to \W(H)
\end{equation}
into the ($\Z/2$-graded, split-closed derived) wrapped Fukaya
category of $H$ such that $\bar{p}$ is the composition of $p$ with Yoneda
embedding. Finally, we set
\begin{equation}
\rho = p\circ r:\W((\C^*)^n\setminus H)\to \W(H).
\end{equation}

\begin{remark}\label{rmk:nattrans}
The family of closed orbits of the Reeb vector field which wrap once around 
$H$ in unit time determines a class $\theta \in SH^2((\C^*)^n\setminus H)$
which, via the closed-open map, induces a natural transformation
$\Theta:\mathrm{id}\to [2]$ acting on $\W((\C^*)^n\setminus H)$.
The restriction of $\Theta$ to the subdomain $H\times \C^*$ is exactly the degree 2
natural transformation used to construct $\bar{p}$ from $\hat{p}$.
With this understood, $\rho$ can be characterized in terms
of localization along the natural transformation $\Theta$: for $L_1,L_2\in
\W((\C^*)^n\setminus H)$,
\begin{equation}\label{eq:HW_is_limit}
HW^i(\rho(L_1),\rho(L_2))\cong \varinjlim_{k\to \infty}
HW^{2k+i}(L_1,L_2),\end{equation}
where the direct limit on the right-hand side is with respect to 
multiplication by $[\Theta_{L_1}]\in HW^2(L_1,L_1)$ (or equivalently,
$[\Theta_{L_2}]\in HW^2(L_2,L_2)$).
This can be viewed both as a ``global'' version of \eqref{eq:CWtensor}
and as a mirror counterpart to \eqref{eq:hom_is_ext}.
\end{remark}

\begin{remark}\label{rmk:framed}
Many of the Lagrangian submanifolds of $(\C^*)^n\setminus H$ that we will
consider below are ``framed'', i.e.\ have the property that $\arg(f)$ is equal 
to zero (or some other fixed constant value) near $H$. Near $H$ such a
Lagrangian is obtained by parallel transport of a Lagrangian
submanifold of $H$ in the fibers of $f$ over a radial arc. 
Thus, the restriction to the
subdomain $H\times \C^*$ is a product $\ell\times\R_+$, and the image
under $\rho$ is simply $\ell$.
\end{remark}

\section{The Fukaya category of the Landau-Ginzburg model $((\C^*)^n,f)$}\label{s:FS}

From now on, we assume that the Laurent polynomial $f$ has a non-trivial
constant term; rescaling $f$ is necessary we will assume that the constant
term is equal to $1$.
We briefly review Abouzaid's version of the Fukaya-Seidel category of
$((\C^*)^n,f)$ \cite{AbToric,AbToric2}, modified to suit our purposes. 
(Various other constructions are also worth mentioning: see \cite{SeLef,Sylvan,AS}.
For our purposes each of these brings with it some desirable features and
some unwanted complications.)

Fix a regular value $c_0$ of $f$, and a simply connected domain 
$\Omega\subset \C$ such that $c_0$ lies on the boundary of $\Omega$.
(Typically we require $\Omega$ to contain all the critical values
of~$f$.)
The objects of $\F((\C^*)^n,f)$ are properly embedded {\em admissible} exact Lagrangian submanifolds of $(\C^*)^n$
with boundary in the fiber $f^{-1}(c_0)$. A Lagrangian submanifold $L$ is
said to be admissible if $f(L)\subset\Omega$ and, in a neighborhood of
$\partial L$, $f_{|L}$ takes values in a smoothly embedded arc $\gamma$
(e.g.\ a straight half-line) whose tangent vector at $c_0$ points into the interior of $\Omega$.
Note that, near its boundary, an admissible Lagrangian $L$ is obtained by parallel transport of
$\partial L\subset f^{-1}(c_0)$ in the fibers of $f$ over the arc $\gamma$.

If the objects of interest include Lagrangian submanifolds which are
non-compact in the fiber direction, we further assume that $f^{-1}(c_0)$
is preserved by the Liouville flow, and that the wrapping Hamiltonian $H$
is well-behaved on admissible Lagrangians. (If necessary this can be 
ensured by a modification of the Liouville structure to a local product model
near $f^{-1}(c_0)$.)


We say that a pair of admissible Lagrangians $(L_0,L_1)$ projecting
to arcs $\gamma_0,\gamma_1$ near their boundary is in positive
position, and write $L_0<L_1$, if the tangent vector to $\gamma_1$ at $c_0$
(pointing into the interior of $\Omega$) points ``to the left'' 
(counterclockwise) from that of $\gamma_0$. It is always possible to perturb
$L_0$ or $L_1$ by a Hamiltonian isotopy supported near $f^{-1}(c_0)$
in order to ensure that the pair lies in positive position. 

If $(L_0,L_1)$ are in positive position, then the morphism space
$\hom(L_0,L_1)$ in the Fukaya-Seidel category $\F((\C^*)^n,f)$ is the
portion of the (wrapped) Floer complex spanned by generators that lie away from the
boundary fiber $f^{-1}(c_0)$. Similarly, given a collection of admissible
Lagrangians $L_0,\dots,L_k$ such that $L_0<L_1<\dots<L_k$, the
$A_\infty$-products are defined as usual by counts of (perturbed)
holomorphic discs (only involving generators outside of $f^{-1}(c_0)$). 

Starting with this partial definition for objects in positive position,
there are two ways to define morphism spaces and $A_\infty$-operations 
for arbitrary objects. One option is to define $\F((\C^*)^n,f)$ by
localization with respect to a suitably defined class of morphisms 
from any admissible Lagrangian to its admissible pushoffs in the positive
direction. The other option is to strengthen
the admissibility condition to fix the tangent direction to the
arc $\gamma$ at $c_0$ (so in fact all objects are required to 
approach $f^{-1}(c_0)$ from the same direction), and
perturb Floer's equation by an auxiliary Hamiltonian $h$ that rotates a neighborhood of
$f^{-1}(c_0)$ in the negative direction so that $\phi_h^1(L_0)<L_1$ for
every pair of objects. 

The Fukaya category $\F((\C^*)^n,f)$ is invariant under deformations
of the domain $\Omega$ and does not depend on the choice of the reference
point $c_0\in \partial \Omega$, as long as no critical value or other
``special fiber'' of $f$ crosses into $\Omega$ or out of it during the
deformation. This justifies omitting these choices from the notation.
(However, we note that the equivalence induced by an isotopic deformation
of $(\Omega,c_0)$ to some other choice $(\Omega',c'_0)$ does depend on the
choice of isotopy.)

Next, recall that in our case $f$ is a Laurent polynomial of the form
\eqref{eq:f}, near the tropical limit.  The image of $H$ (or more generally
$f^{-1}(c_0)$ for fixed $c_0\neq 1$ independent of~$\tau$) under the
logarithm map $$\mathrm{Log}:(x_1,\dots,x_n)\mapsto \frac{1}{|\log \tau|}
(\log |x_1|,\dots,\log |x_n|)$$ converges as $\tau\to 0$ to
the tropical hypersurface $\Gamma\subset \R^n$ defined
by the tropicalization $\varphi$, i.e.\ the set of points where the maximum
in \eqref{eq:tropf} is not unique.  The components of
$\R^n\setminus \Gamma$ correspond to the regions where the different terms
in \eqref{eq:tropf} achieve the maximum. For $\alpha\in A$, we
denote by $\Delta_\alpha$ the component of
$\R^n\setminus \Gamma$ on which $\alpha$
achieves the maximum in \eqref{eq:tropf}, and focus our attention on
the component $\Delta_0$ corresponding to the constant term. Note that
$\mathcal{U}_0=\mathrm{Log}^{-1}(\Delta_0)\subset (\C^*)^n$ is the set of
points where the constant term dominates all the other monomials that appear
in $f$. Enlarging $\Delta_0$ slightly, let $\Delta_0^+\subset \R^n$ be the 
$\delta$-neighborhood of $\Delta_0$ for fixed $\delta\ll 1$. For $\tau$
small enough, the portion of the amoeba $\mathrm{Log}(H)$ (resp.\ 
$\mathrm{Log}(f^{-1}(c_0))$) 
that converges to $\partial \Delta_0\subset \Gamma$ is contained inside
$\Delta_0^+$, and it makes sense to consider admissible Lagrangians which are
entirely contained inside
$\mathcal{U}_0^+=\mathrm{Log}^{-1}(\Delta_0^+)\subset (\C^*)^n$.

\begin{definition} We denote by
$\F^\circ((\C^*)^n,f)$ the full subcategory of
$\F((\C^*)^n,f)$ whose objects are admissible Lagrangians supported inside
$\mathcal{U}_0^+$.
\end{definition}

\noindent
In fact, Abouzaid only considers Lagrangians which are
sections of the logarithm map over the appropriate component of $\R^n\setminus 
\mathrm{Log}(f^{-1}(c_0))$; these are expected to split-generate
$\F^\circ((\C^*)^n,f)$.

Recalling that Lefschetz thimbles of critical
points of $f$ are an important source of objects of the Fukaya-Seidel
category, restricting to $\F^\circ\subset \F((\C^*)^n,f)$ basically amounts
to discarding all the critical points of $f$ where the constant
terms is not one of the dominant monomials in \eqref{eq:f}. In most cases
these correspond to the critical values which tend to infinity as $\tau\to
0$, so it is quite often the case that $\F^\circ((\C^*)^n,f)$ can be
constructed directly by choosing $\Omega$ to be a suitable bounded domain -- 
for example, the unit disc centered at $1$ in the complex plane, or a slight
enlargement thereof, is a natural choice.

The introduction of the restricted Fukaya-Seidel category $\F^\circ((\C^*)^n,f)$ is
motivated by homological mirror symmetry. Returning to the setup in the
introduction, since the components $\Delta_\alpha$ of $\R^n\setminus \Gamma$ 
arise as facets of the moment polytope $\Delta_Y$ defined by
\eqref{eq:delta_Y}, they can be identified
with the moment polytopes for the irreducible toric divisors $Z_\alpha$ of
the toric variety $Y$. In particular, $\Delta_0$ is the moment polytope
for the distinguished divisor $Z_0$ considered in the introduction.

Assume that $Z_0$ is compact,
i.e.\ the component $\Delta_0$ of $\R^n\setminus \Gamma$ is 
bounded, which happens precisely when $0$ is an interior point of 
$\mathrm{Conv}(A)$. In this case, ignoring slight differences in setup,
Abouzaid's thesis \cite{AbToric2} essentially shows that
\begin{equation}\label{eq:abouzaid}
\F^\circ((\C^*)^n,f) \simeq D^b\Coh(Z_0).
\end{equation}
In fact, Abouzaid's strategy of proof can be extended to the case where
$\Delta_0$ is unbounded, and one expects that \eqref{eq:abouzaid} continues
to hold in full generality. (This is by no means non-trivial, but it should
follow in a fairly straightforward way from the construction of the wrapped
category by localization.)

In general the category $\F^\circ((\C^*)^n,f)$ is a strict subcategory of
$\F((\C^*)^n,f)$. For example, let
$$f(x_1,x_2)=1+x_1^{-1}+x_2^{-1}+\tau x_2+\tau^{k+1}x_1x_2^k\quad
\text{for $k\ge 3$},$$
in which case $Z_0$ is the non-Fano Hirzebruch surface $\mathbb{F}_k=
\PP(\O_{\PP^1}\oplus \O_{\PP^1}(-k))$. By \hbox{\cite[\S 5]{AKO}}, in this
case $f$ has $k-2$ critical points outside of $\mathcal{U}_0^+$,
and $\F((\C^*)^n,f)$ is strictly larger than $D^b\Coh(\mathbb{F}_k)$,
whereas Abouzaid's result holds for $\F^\circ((\C^*)^n,f)$.

When assumption \eqref{eq:linebundlecase} holds, we expect the two
categories to coincide:

\begin{lemma}\label{l:convergeto1}
Assume that $0\in A$ is a vertex of every maximal cell of the polyhedral
decomposition $\mathcal{P}$.
Then, for $\tau$ sufficiently small, all the critical points of $f$ lie in
$\mathcal{U}_0$, and the critical values of $f$ converge to $1$.
\end{lemma}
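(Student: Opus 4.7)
The plan is to work with $g := f - 1$, which has the same critical points as $f$ in $(\C^*)^n$, and to locate these critical points via tropical geometry. The equations $x_j \partial_j g = 0$ force the monomials of $g$ of largest magnitude to mutually cancel; passing to rescaled log coordinates $\xi = \mathrm{Log}(x(\tau))$ and letting $\tau \to 0$, any accumulation point $\xi^*$ must be a location where several monomials of $g$ tie in the maximum defining $\varphi'(\xi) := \max_{\alpha \in A \setminus \{0\}} (\langle \alpha, \xi\rangle - \rho(\alpha))$ and where the corresponding initial form admits critical points in $(\C^*)^n$. For generic coefficients these $\xi^*$ are exactly the $0$-dimensional strata of the tropical hypersurface of $g$, dual to the maximal cells $\sigma'$ of the regular polyhedral subdivision $\mathcal{P}'$ of $\mathrm{Conv}(A \setminus \{0\})$ induced by $\rho|_{A \setminus \{0\}}$; initial forms attached to lower-dimensional cells of $\mathcal{P}'$ have Newton polytopes of dimension $< n$ and admit no critical points in the torus.

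At such a dual vertex one has an explicit formula: if $\alpha \mapsto \langle c_{\sigma'}, \alpha\rangle + d_{\sigma'}$ is the affine function interpolating $\rho$ on the vertices of $\sigma'$, then $\xi^*_{\sigma'} = c_{\sigma'}$ and $\varphi'(\xi^*_{\sigma'}) = -d_{\sigma'}$. Hence $\xi^*_{\sigma'} \in \Delta_0$ (equivalently, the constant term $1$ strictly dominates every non-constant monomial of $f$ at $\xi^*_{\sigma'}$) if and only if $d_{\sigma'} > 0$, i.e.\ the affine extension of $\rho|_{\sigma'}$ evaluates to a positive number at $\xi = 0$.

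The crux of the proof is to verify that assumption \eqref{eq:linebundlecase} forces $d_{\sigma'} > 0$ for every maximal cell $\sigma'$ of $\mathcal{P}'$. This is a purely polyhedral statement: the subdivision $\mathcal{P}$ is obtained from $\mathcal{P}'$ by inserting the additional lifted point $(0, \rho(0)) = (0, 0)$ into the set $\{(\alpha, \rho(\alpha)) \,|\, \alpha \in A \setminus \{0\}\} \subset \R^{n+1}$ and recomputing the lower convex hull. The old facet lying over $\sigma'$ persists as a maximal facet of the new hull --- yielding a maximal cell of $\mathcal{P}$ not containing $0$ --- precisely when $(0,0)$ lies on or above its affine extension at $\xi = 0$, i.e.\ when $d_{\sigma'} \le 0$. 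The hypothesis that every maximal cell of $\mathcal{P}$ contains $0$ rules out this possibility for every $\sigma'$.

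Combining these pieces: for $\tau$ sufficiently small, each critical point $x(\tau)$ of $f$ has $\mathrm{Log}(x(\tau))$ close to some $\xi^*_{\sigma'} \in \Delta_0$, so $x(\tau) \in \mathcal{U}_0$; moreover each non-constant monomial of $f$ at $x(\tau)$ has magnitude $O(\tau^{d_{\sigma'}}) \to 0$, which gives $f(x(\tau)) = 1 + g(x(\tau)) \to 1$. The main difficulty is the polyhedral claim above, together with a rigorous justification that critical points of $g$ in $(\C^*)^n$ accumulate only at tropical vertices dual to maximal cells of $\mathcal{P}'$; the boundary case $0 \in \partial \mathrm{Conv}(A)$ also requires some care, but when $\mathrm{Conv}(A \setminus \{0\})$ has dimension $< n$ (as for the pair of pants $f = 1 + x_1 + \dots + x_n$) the polynomial $g$ has no critical points in $(\C^*)^n$ at all and the lemma is vacuous.
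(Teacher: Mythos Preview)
Your approach is correct in outline but takes a substantially more circuitous route than the paper's. You pass to $g=f-1$, introduce an auxiliary regular subdivision $\mathcal{P}'$ of $\mathrm{Conv}(A\setminus\{0\})$, and argue via tropical localization of critical points at the vertices of the tropical hypersurface of $g$, then translate the hypothesis into a polyhedral statement about the constants $d_{\sigma'}$. The paper's proof bypasses all of this machinery by analyzing the logarithmic critical point equation
\[
\sum_{\alpha\in A}\bigl(c_\alpha\tau^{\rho(\alpha)}x^\alpha\bigr)\,\alpha=0
\]
directly: if $B\subseteq A$ denotes the set of dominant monomials at a critical point $x$, then $B$ is the vertex set of some cell of $\mathcal{P}$, and the hypothesis together with the simplex assumption forces the nonzero elements of $B$ to be linearly independent in $\R^n$ (they lie among the nonzero vertices of a simplex having $0$ as a vertex). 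Since the $\alpha=0$ term contributes nothing to the vector equation above, the leading-order part is a nontrivial linear combination of linearly independent vectors and hence cannot vanish---a contradiction unless $B=\{0\}$. This takes a few lines and requires no auxiliary subdivision, no compactness/accumulation argument, and no genericity assumption on the $c_\alpha$. Your tropical route would ultimately work (the ``generic coefficients'' caveat is in fact unnecessary for the direction you need, and the polyhedral claim about $d_{\sigma'}>0$ is correct), but the acknowledged gaps around accumulation of $\mathrm{Log}(x(\tau))$ and boundary behavior are real work to fill in, whereas the paper's argument is complete as written.
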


\begin{proof}
Let $x$ be a critical point of $f$. Denote by $B\subseteq A$ the set of
leading order terms of $f$ near $x$, i.e.\ those $\alpha$ which come close
to achieving the maximum in \eqref{eq:tropf} at $\xi=\mathrm{Log}(x)$.
The elements of $B$ are the vertices of some cell of the
polyhedral decomposition $\mathcal{P}$, and if $\tau$ is sufficiently small
the other terms in $f$ are much smaller than those indexed by $\alpha\in B$.
Since the cells of $\mathcal{P}$ are simplices (by the maximal degeneration
assumption), the assumption that every maximal cell contains $0$ as a vertex
implies that the non-zero elements of $B$ are linearly independent.
Using logarithmic derivatives, the critical points are the solutions of
$$\sum_{\alpha \in A} \left(c_\alpha
\tau^{\rho(\alpha)}x^\alpha\right)\alpha = 0.$$
Assume that $B\neq \{0\}$. Then the leading order terms in this equation at
$x$ correspond to $\alpha \in B\setminus \{0\}$. However these terms are
linearly independent in $\R^n$ and hence cannot cancel out. This leads to a
contradiction if $\tau$ is sufficiently small.
Thus $B=\{0\}$, i.e.\ $x$ lies in the region where the constant term
dominates.
\end{proof}

\noindent
An alternative argument based on tropical geometry is that, when
\eqref{eq:linebundlecase} holds, the tropicalization of $f-c$ is
``tropically smooth'' and combinatorially similar to that of $f$ whenever
$c$ is sufficiently different from $1$; this implies that for small
enough $\tau$ the fibration $f$
is locally trivial outside of a small disc centered at $1$.
\medskip

We finish this discussion by recalling two important functors relating
$\F^\circ((\C^*)^n,f)$ to other Fukaya categories. The first one is {\em restriction to the
reference fiber},
$$\cap:\F^\circ((\C^*)^n,f)\to \W(f^{-1}(c_0)).$$
Given an admissible Lagrangian $L$ in $(\C^*)^n$, we define
$\cap(L)=\partial L\subset f^{-1}(c_0)$. On
morphisms, $\cap$ is defined by counting (perturbed) holomorphic discs with boundary on given
admissible Lagrangians (isotoped to lie in positive position), with inputs mapped to 
given generators in the
interior (away from $f^{-1}(c_0)$) and output a generator which lies
on the boundary (in $f^{-1}(c_0)$).

A folklore statement 
(which has so far only been verified in specific cases but should in this
setting be well within reach) is as
follows (see also \cite[\S 5]{Au09} for related considerations).

\begin{conj}\label{conj:hmsfiber}
Let $D_0$ be the union of all the irreducible toric divisors of $Z_0$,
and denote by $i_{D_0}:D_0\hookrightarrow Z_0$ the inclusion. Assume
that \eqref{eq:linebundlecase} holds. Then $f^{-1}(c_0)$ is mirror to $D_0$,
and the functor $\cap:\F^\circ((\C^*)^n,f)\to \W(f^{-1}(c_0))$ 
corresponds under mirror
symmetry to the restriction functor $i_{D_0}^*:D^b\Coh(Z_0)\to
D^b\Coh(D_0)$.
\end{conj}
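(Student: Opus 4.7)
The plan is to lift Abouzaid's equivalence \eqref{eq:abouzaid} to a commutative square of $A_\infty$-functors
\[
\begin{CD}
\F^\circ((\C^*)^n,f) @>\cap>> \W(f^{-1}(c_0)) \\
@V{\simeq}VV @VV{\simeq}V \\
D^b\Coh(Z_0) @>{i_{D_0}^*}>> D^b\Coh(D_0),
\end{CD}
\]
in which both vertical arrows come from the same family-Floer / SYZ construction. Commutativity of the square is then essentially built in, and the theorem reduces to establishing the right-hand equivalence.

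First I would set up the SYZ picture. Under assumption \eqref{eq:linebundlecase} and for $\tau$ sufficiently small, the construction of \cite{AAK} exhibits $\mathrm{Log}\colon \mathcal{U}_0^+\to\Delta_0^+$ as a singular Lagrangian torus fibration mirror to an open neighborhood of $Z_0\subset Y$; Abouzaid's family-Floer mirror functor built from this fibration is what realizes the left vertical arrow. The amoeba of $f^{-1}(c_0)$ lies in a small neighborhood of $\partial\Delta_0$, which is precisely the union of moment polytopes of the irreducible components of $D_0$. Hence the restricted fibration $\mathrm{Log}|_{f^{-1}(c_0)}$ is SYZ-dual to a neighborhood of $D_0$, and the same family-Floer machine, applied to this boundary fibration, produces the candidate mirror functor $\W(f^{-1}(c_0))\to D^b\Coh(D_0)$ on the right.

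The commutativity of the square is then tautological on generators: an admissible Lagrangian section $L$ of $\mathrm{Log}$ over $\overline{\Delta_0}$ corresponds to a sheaf $\mathcal F\in D^b\Coh(Z_0)$, and its boundary $\partial L\subset f^{-1}(c_0)$ is nothing but the restriction of the SYZ section to $\partial\Delta_0$, hence corresponds to $i_{D_0}^*\mathcal F$. Upgrading to full $A_\infty$-coherence requires matching the disc counts that define $\cap$ with those that define sheaf-theoretic restriction; this reduces, via Abouzaid's identification of Floer products with sections of line bundles on the mirror, to the tautology that global sections of a line bundle on $Z_0$ restrict compatibly under $i_{D_0}^*$.

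The main obstacle is proving that the right vertical arrow is actually an equivalence, since $D_0$ is a simple normal crossings union of smooth toric varieties rather than a smooth variety, so Abouzaid's theorem does not apply off the shelf. I would attack this by descent and induction on dimension: decompose $D_0$ as a homotopy colimit over its toric strata on the $B$-side and $f^{-1}(c_0)$ as the corresponding symplectic gluing of local pieces on the $A$-side (each piece falling within the scope of Abouzaid's theorem one dimension lower, in the spirit of the inductive framework of \S\S\ref{s:pants}--\ref{s:pantscomp}), and glue the resulting equivalences. Controlling the $A_\infty$ coherences of this gluing, and in particular establishing split-generation of $\W(f^{-1}(c_0))$ by the boundaries of admissible Lagrangians, is the principal technical difficulty; an alternative route, closer in spirit to \cite{AbToric,AbToric2}, would be to identify both sides directly with the category of perfect modules over an explicit combinatorial $A_\infty$-algebra attached to the polytope $\partial\Delta_0$.
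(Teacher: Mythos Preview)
First, note that this statement is a \emph{conjecture} in the paper, not a theorem; the paper does not prove it, calling it ``a folklore statement which has so far only been verified in specific cases.'' So there is no proof to compare against, only the surrounding discussion.

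That said, the paper does indicate a route. In \S\ref{s:FSaccel} (the discussion around diagram~\eqref{eq:diagrhoalpha0}), Conjecture~\ref{conj:hmsfiber} is obtained from Conjectures~\ref{conj:hmscompl} and~\ref{conj:FS}(1) together with the identity $\cap=\rho\circ\alpha_0$ on the $A$-side and the algebraic fact $\varepsilon\circ q\circ i_*=i_{D_0}^*$ on the $B$-side. In other words, the paper factors everything through the intermediate category $\W((\C^*)^n\setminus H)\simeq D^b\Coh(Z)$ and Orlov's equivalence $\varepsilon:D^b_\sg(Z)\simeq D^b\Coh(D_0)$ (Remark~\ref{rmk:periodlinebundlecase}).

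Your route is the direct one: build $\W(f^{-1}(c_0))\simeq D^b\Coh(D_0)$ by an Abouzaid-style argument on the boundary fibration, and check commutativity on Lagrangian sections. This is a reasonable alternative plan, and your identification of the main obstacle --- the right vertical equivalence when $D_0$ is reducible --- is accurate. Note however that this equivalence is essentially Conjecture~\ref{conj:hms} for $H$ (since $f^{-1}(c_0)$ is deformation equivalent to $H$, and $D^b_\sg(Z)\simeq D^b\Coh(D_0)$ by Remark~\ref{rmk:periodlinebundlecase}), so both strategies rest on the same hard input. The paper's factorization has the advantage of cleanly separating the functor comparison from the underlying equivalences; your approach has to establish the equivalence and the functor compatibility in one go.

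One technical quibble: calling $\mathrm{Log}:\mathcal{U}_0^+\to\Delta_0^+$ a ``singular Lagrangian torus fibration'' and invoking a family-Floer functor overstates what \cite{AbToric,AbToric2} actually do. The Log map on $(\C^*)^n$ is a smooth $T^n$-fibration; Abouzaid works with explicit Lagrangian sections over $\Delta_0$ and computes Floer products combinatorially, not via family Floer. Your ``tautological on generators'' observation is correct --- $\partial L_k$ does correspond to $i_{D_0}^*\O(k)$ --- but promoting this to a statement about $A_\infty$-functors is precisely where the content lies, and is not as automatic as the word ``tautological'' suggests.
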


\begin{remark}\label{rmk:periodlinebundlecase}
When \eqref{eq:linebundlecase} holds, $Y$ is isomorphic to
the total space of the canonical bundle of $Z_0$, and Orlov's results
\cite{OrlovKnorrer,OrlovGraded} give an equivalence $D^b_\sg(Z)\simeq D^b\Coh(D_0)$.
Thus, the statement that $f^{-1}(c_0)$ is mirror to $D_0$ is consistent with
Conjecture \ref{conj:hms}.
\end{remark}

The {\em acceleration} functor $\alpha:\F((\C^*)^n,f)\to \W((\C^*)^n)$,
meanwhile, amounts to completing admissible Lagrangians
to properly embedded Lagrangians in $(\C^*)^n$ via parallel transport over
an arc $\eta$ that connects $c_0$ to infinity in the complement of $\Omega$.

Constructing $\alpha$ in our setting is less straightforward than in some
other approaches to the Fukaya-Seidel category \cite{SeLef,AS2,Sylvan}.
One option is to set up the
completion of admissible Lagrangians in such a way that the
generators which lie inside $f^{-1}(\Omega)$ form a subcomplex of the
wrapped Floer complex.
Namely, choosing the arc $\eta$ suitably and/or modifying the Liouville
structure, we can assume that $f^{-1}(\eta)$ is preserved by the Liouville
flow. Given an admissible Lagrangian $L\subset f^{-1}(\Omega)$ with 
boundary in $f^{-1}(c_0)$, we get a properly embedded Lagrangian $\hat{L}$
by attaching to $L$ the cylinder obtained by parallel transport of $\partial
L\subset f^{-1}(c_0)$ 
over the arc $\eta$ (and rounding the corners at $c_0$ if the projections do not match
smoothly). Furthermore, we perturb the wrapping Hamiltonian by a term that
pushes $f^{-1}(c_0)$ slightly in the positive direction along the boundary of
$f^{-1}(\Omega)$. This has the effect of getting rid of the intersections in
$f^{-1}(c_0)$, whose existence would prevent the interior intersections from
forming a subcomplex. With this understood, given admissible Lagrangians
$L_0<\dots<L_k$ and their completions $\hat{L}_0,\dots,\hat{L}_k$,
the wrapped Floer complexes $CW^*(\hat{L}_i,
\hat{L}_j)$ contain two types of generators: those which lie in
$f^{-1}(\Omega)$, and those which lie over the arc $\eta$. Setting up the
Liouville structure carefully over $f^{-1}(\eta)$, one can ensure (using
e.g.\ a maximum principle and the local structure near Reeb chords) that 
the output of a perturbed J-holomorphic disc with inputs in $f^{-1}(\Omega)$
also lies in $f^{-1}(\Omega)$. Thus, $CW^*(\hat{L}_i,\hat{L}_j)$ contains
a subcomplex quasi-isomorphic to $\hom_{\F((\C^*)^n,f)}(L_i,L_j)$, and the
inclusion of these subcomplexes is part of an $A_\infty$-functor.

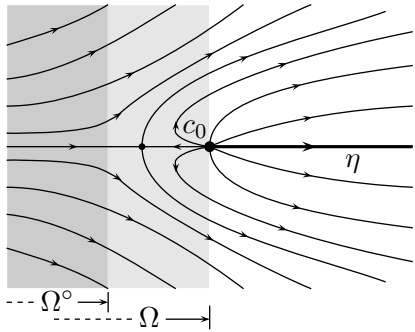
\begin{figure}[b]
\setlength{\unitlength}{9mm}
\begin{picture}(6,4.3)(-3,-2.3)
\psset{unit=\unitlength,linewidth=0.5pt}
\newgray{gray90}{0.9}
\newgray{gray85}{0.8}
\psframe[fillcolor=gray90,fillstyle=solid,linestyle=none](-3,-2.1)(0,2.1)
\psframe[fillcolor=gray85,fillstyle=solid,linestyle=none](-3,-2.1)(-1.5,2.1)
\psline[linestyle=dashed,dash=2pt 2pt](-3,-2.3)(-2.6,-2.3)
\put(-2.5,-2.42){\small $\Omega^\circ$}
\psline{->}(-1.95,-2.3)(-1.5,-2.3)
\psline[linestyle=dashed,dash=2pt 2pt](-2.3,-2.55)(-1.15,-2.55)
\put(-1.05,-2.67){\small $\Omega$}
\psline{->}(-0.65,-2.55)(0,-2.55)
\psline(-1.5,-2.45)(-1.5,-2.15)
\psline(0,-2.7)(0,-2.35)
\pscircle*(0,0){0.08}
\pscircle*(-1,0){0.05}
\psline[ArrowInside=->,linewidth=1.2pt](0,0)(3,0)
\psline[ArrowInside=->](0,0)(-1,0)
\psline[ArrowInside=->](-3,0)(-1,0)
\psbezier[ArrowInside=->](-1,0)(-1,1.1)(1.5,1.6)(2.6,2)
\psbezier[ArrowInside=->](0,0)(1,0.5)(3,0.6)
\pscurve(0,0)(-0.5,0.2)(-0.5,0.3)
\psbezier[ArrowInside=->](-0.5,0.3)(-0.5,0.6)(0.5,1.2)(3,1.7)
\psline{->}(-0.5,0.28)(-0.5,0.38)
\psbezier[ArrowInside=->](0,0)(0,0.6)(1,0.95)(3,1.2)
\pscurve{->}(-3,0.2)(-1.7,0.3)(-1.3,0.5)
\psbezier[ArrowInside=->](-1.35,0.47)(-0.8,1)(1,1.9)(1.5,2.1)
\psbezier[ArrowInside=->](-3,0.6)(-1.5,0.6)(0.1,1.8)(0.5,2.1)
\psbezier[ArrowInside=->](-3,1)(-2,1.1)(-1,1.8)(-0.5,2.1)
\psbezier[ArrowInside=->](-3,1.5)(-2.5,1.65)(-2,1.8)(-1.5,2.1)
\psbezier[ArrowInside=->](-1,0)(-1,-1.1)(1.5,-1.6)(2.6,-2)
\psbezier[ArrowInside=->](0,0)(1,-0.5)(3,-0.6)
\pscurve(0,0)(-0.5,-0.2)(-0.5,-0.3)
\psbezier[ArrowInside=->](-0.5,-0.3)(-0.5,-0.6)(0.5,-1.2)(3,-1.7)
\psline{->}(-0.5,-0.28)(-0.5,-0.38)
\psbezier[ArrowInside=->](0,0)(0,-0.6)(1,-0.95)(3,-1.2)
\pscurve{->}(-3,-0.2)(-1.7,-0.3)(-1.3,-0.5)
\psbezier[ArrowInside=->](-1.35,-0.47)(-0.8,-1)(1,-1.9)(1.5,-2.1)
\psbezier[ArrowInside=->](-3,-0.6)(-1.5,-0.6)(0.1,-1.8)(0.5,-2.1)
\psbezier[ArrowInside=->](-3,-1)(-2,-1.1)(-1,-1.8)(-0.5,-2.1)
\psbezier[ArrowInside=->](-3,-1.5)(-2.5,-1.65)(-2,-1.8)(-1.5,-2.1)
\put(2,-0.3){\small $\eta$}
\put(-0.4,0.2){\small $c_0$}
\end{picture}
\caption{Acceleration as restriction to a Liouville subdomain.} \label{fig:accelflow}
\end{figure}

An alternative and perhaps more elegant construction of the acceleration
functor is to set things up so that $f^{-1}(\Omega)$ contains a Liouville
subdomain whose completion is Liouville deformation equivalent to the total
space (here, $(\C^*)^n$). For example, one can arrange for the Liouville
structure in a neighborhood of $f^{-1}(\eta)$ to be a product one, where
in the base of the fibration $f$ the Liouville flow is as depicted in 
Figure~\ref{fig:accelflow}. Then $f^{-1}(\Omega^\circ)$ is a Liouville 
subdomain (since the Liouville flow is everywhere transverse to its
boundary); the total space of the fibration contains additional cancelling
pairs of handles which are not present in the completion of
$f^{-1}(\Omega^\circ)$, but the two are nonetheless deformation equivalent
as Liouville manifolds. Requiring admissible Lagrangians to approach
$f^{-1}(c_0)$ along the horizontal axis of Figure \ref{fig:accelflow}, 
their restrictions to $f^{-1}(\Omega^\circ)$ are properly embedded and
define objects of the wrapped Fukaya category.  In this context, $\alpha$ is
simply Abouzaid and Seidel's restriction functor \cite{AS} to
$\W(f^{-1}(\Omega^\circ))\simeq \W((\C^*)^n)$.

\section{Acceleration, restriction, and lifting}\label{s:conjFS}

\subsection{The acceleration functors $\alpha_0$ and $\alpha_\infty$}\label{s:FSaccel}

In this section we define two acceleration functors $\alpha_0$ and
$\alpha_\infty$ from $\F^\circ((\C^*)^n,f)$ to $\W((\C^*)^n\setminus H)$.

First we observe that the critical values of $f$ relevant to the category
$\F^\circ((\C^*)^n,f)$ converge to 1 as $\tau\to 0$, by the proof of Lemma
\ref{l:convergeto1}. (Indeed, going over the proof, since $\F^\circ$ only considers the region where the
constant term of $f$ is among the largest monomials, it is a given that
$0\in B$ and the assumption of the lemma is not necessary in order to conclude that
$B=\{0\}$.) It follows that the domain $\Omega$ to which admissible
Lagrangians are required to project can be chosen to be a
neighborhood of 1; our preferred choice is the unit disc centered at 1, with
$c_0=0\in \partial \Omega$, or a slightly smaller disc, with $c_0$ on the
positive real axis near the origin.

The simplest way to construct the acceleration functor $\alpha_0$ is to view
admissible
Lagrangian submanifolds of $(\C^*)^n$ with boundary in $f^{-1}(0)=H$ as 
properly embedded Lagrangian submanifolds of $(\C^*)^n\setminus H$. This
determines $\alpha_0$ on objects. On morphisms, $\alpha_0$ is defined by
Floer-theoretic continuation
maps from the Hamiltonian perturbation used to construct $\F^\circ((\C^*)^n,f)$
to the quadratic Hamiltonian for $\W((\C^*)^n\setminus H)$; these are well-defined
because, even after correcting the former to account for the change in
symplectic structure, the latter Hamiltonian has a faster growth rate near $H$.

More precisely, say that we construct $\F^\circ((\C^*)^n,f)$ by modifying the 
Liouville structure of $(\C^*)^n$ to a product one near the hypersurface
$H$, considering only admissible Lagrangians whose projection under 
$f$ approaches the origin from a fixed direction (e.g.\ the real positive
axis), and using an auxiliary Hamiltonian perturbation that
rotates a neighborhood of the origin in the clockwise direction in order
to ensure positive position. 
As seen in \S \ref{s:hmscompl}, removing $H$ from $(\C^*)^n$ entails
a change in the Liouville structure. With respect to the new symplectic
structure, the ``positive position'' perturbation is achieved by
a Hamiltonian which grows linearly, with a small positive
slope, with respect 
to the radial coordinate of the cylindrical end near $H$. 
The Hamiltonians used to define morphisms and compositions in
$\W((\C^*)^n\setminus H)$ have a faster growth rate along $H$, and
hence there are well-defined continuation maps.

Another way to construct $\alpha_0$, which fits into the general framework
of acceleration functors discussed at the end of the previous section, is to
set up $\F^\circ((\C^*)^n,f)$ using a domain $\Omega$ which stays away
from the origin, say a disc of radius $1-\epsilon$ centered at~$1$,
and a reference point located near (but not at) the origin, say $c_0=\epsilon \in
\partial \Omega$. Since the origin lies outside of $\Omega$, we can just as
well remove the fiber over zero and work in $(\C^*)^n\setminus H$ with the
Liouville structure constructed in \S \ref{s:hmscompl} (suitably modified 
near $f^{-1}(\epsilon)$ for the needs of the construction of
$\F^\circ((\C^*)^n,f)$). Viewing the restriction of $f$ to
$(\C^*)^n\setminus H$ as a fibration over $\C^*$ (instead of $\C$),
we choose an arc $\eta_0$ connecting $c_0=\epsilon$ to the origin 
(instead of infinity); the canonical choice is the interval $(0,\epsilon]$
in the real axis. We then construct $\alpha_0$ as in \S \ref{s:FS}, either
by extending admissible Lagrangians with boundary in $f^{-1}(\epsilon)$ by
parallel transport in the fibers of $f$ over the interval $(0,\epsilon]$,
or by restriction to a Liouville subdomain (disjoint from 
$f^{-1}((0,\epsilon])$) whose completion is deformation
equivalent to $(\C^*)^n\setminus H$.

To define the other acceleration functor $\alpha_\infty$, 
we construct $\F^\circ((\C^*)^n,f)$ by setting
$\Omega$ to be the disc of radius $1-\epsilon$ centered at 1, 
and observe again that, since the origin lies outside of $\Omega$,
we can just as well work with the restriction $f:(\C^*)^n\setminus H\to \C^*$.
Choose an arc $\eta_\infty$ that connects $c_0\in \partial\Omega$ to infinity in the complement of
$\Omega$ (avoiding the origin and any critical values of $f$ that may lie
outside of $\Omega$).  When there are no critical values outside
of $\Omega$ (e.g.\ when \eqref{eq:linebundlecase} holds), the most natural
choice is to take $c_0=2-\epsilon$ and $\eta_\infty$ the interval
$[2-\epsilon,\infty)$ in the real axis. In the general case, when
$f$ has additional critical values near infinity, the functor
$\alpha_\infty$ genuinely depends on the choice of the arc $\eta_\infty$,
as we shall see on an explicit example in \S \ref{s:Pn}.

In any case, the construction described at
the end of \S \ref{s:FS} then provides an acceleration functor
$\alpha_\infty:\F^\circ((\C^*)^n,f)\to \W((\C^*)^n\setminus H)$.
By construction, $\rho\circ \alpha_\infty=0$, since the objects in the
image of $\alpha_\infty$ remain away from a neighborhood of $H$.

For the purpose of comparing $\alpha_0$ and $\alpha_\infty$ as we will do in
\S \ref{s:FSj} below, it is
useful to have both functors defined on the same model of the category
$\F^\circ((\C^*)^n,f)$, i.e.\ choose $c_0=\epsilon$ rather than
$c_0=2-\epsilon$. Whenever necessary, we
identify the categories corresponding to the choices $c_0=\epsilon$ and
$c_0=2-\epsilon$ via the isotopy that moves the reference point along the
lower half of the boundary of the disc $\Omega$.

The functors $\alpha_0$ and $\alpha_\infty$ are very similar to each
other at first glance, as should be apparent by 
viewing $(\C^*)^n\setminus H$ as the total space of a fibration over $\C^*$
(the restriction of $f$).
One can then consider admissible Lagrangians whose projections under $f$
go towards either end of the cylinder, and the corresponding acceleration
functors; see Figure \ref{fig:functors}. 
Thus, the constructions of $\alpha_0$ and $\alpha_\infty$ extend in a straightforward manner to more general
symplectic fibrations over the cylinder.
However, in our case an important feature that breaks the symmetry 
between the two ends of Figure \ref{fig:functors} is that the monodromy
around $0$ is trivial, which is a crucial feature needed to
define the restriction functor $\rho$, whereas the monodromy around $\infty$
is not.

\begin{figure}[b]
\setlength{\unitlength}{1.2cm}
\begin{picture}(4.5,2)(-1.5,-1)
\psset{unit=\unitlength}
\pscircle[linewidth=0.5pt,linestyle=dotted](0.9,0){0.9}
\pscircle[fillstyle=solid,fillcolor=white](-0.1,0){0.08}
\psline(0.85,0.15)(0.95,0.25)
\psline(0.85,0.25)(0.95,0.15)
\psline(0.9,-0.15)(1,-0.25)
\psline(0.9,-0.25)(1,-0.15)
\psline(1,0.05)(1.1,-0.05)
\psline(1,-0.05)(1.1,0.05)
\psline(1.05,0)(-0.02,0)
\pscurve(0.9,0.2)(0.7,0.03)(0.4,0)
\pscurve(0.95,-0.2)(0.7,-0.03)(0.4,0)
\pscurve(0.6,0)(0.5,-0.03)(0.5,-0.4)(1,-0.6)(1.45,-0.4)(1.65,-0.1)(1.8,-0.03)(2.5,0)(3,0)
\psline(-0.18,0)(-1.6,0)
\put(-0.3,0.15){\small 0}
\put(0.7,0.4){\tiny crit\,$f$}
\put(0.1,0.15){\small $\alpha_0$}
\put(-1,0.18){\small $j$}
\put(2.2,0.15){\small $\alpha_\infty$}
\put(1.75,0.7){\small $\Omega$}
\end{picture}
\qquad\qquad
\setlength{\unitlength}{2cm}
\begin{picture}(3.3,1.4)(-0.6,-0.7)
\psset{unit=\unitlength}
\psline(-0.5,-0.7)(2.5,-0.7)
\psline(-0.5,0.7)(2.5,0.7)
\psellipticarc(2.5,0)(0.15,0.7){-90}{90}
\psellipticarc[linestyle=dashed](2.5,0)(0.15,0.7){90}{270}
\psellipse(-0.5,0)(0.15,0.7)
\pscircle*(0.7,0.2){0.03}
\pscircle*(0.9,0.2){0.03}
\pscircle*(1.1,0.2){0.03}
\psline(0.7,0.2)(-0.35,0.2)
\pscurve(0.9,0.2)(0.8,0.1)(0.6,0.08)
\pscurve(1.1,0.2)(0.9,0)(0.6,-0.04)
\psline(0.6,0.08)(-0.35,0.08)
\psline(0.6,-0.04)(-0.35,-0.04)
\psline(1.1,0.2)(2.65,0.2)
\pscurve(0.9,0.2)(1,0.1)(1.2,0.08)
\pscurve(0.7,0.2)(0.9,0)(1.2,-0.04)
\psline(1.2,0.08)(2.65,0.08)
\psline(1.2,-0.04)(2.65,-0.04)
\put(0.7,0.35){\small crit\,$f$}
\put(-0.85,0){\small 0}
\put(2.75,0){\small $\infty$}
\put(0,0.3){\small $\alpha_0$}
\put(1.5,0.3){\small $\alpha_\infty$}
\psline(-0.4,-0.5)(2.6,-0.5)
\put(0.8,-0.4){\small $j$}
\end{picture}
\caption{The fibration $f:(\C^*)^n\setminus H\to \C^*$ and the functors
$\alpha_0,\alpha_\infty,j.$}\label{fig:functors}
\end{figure}
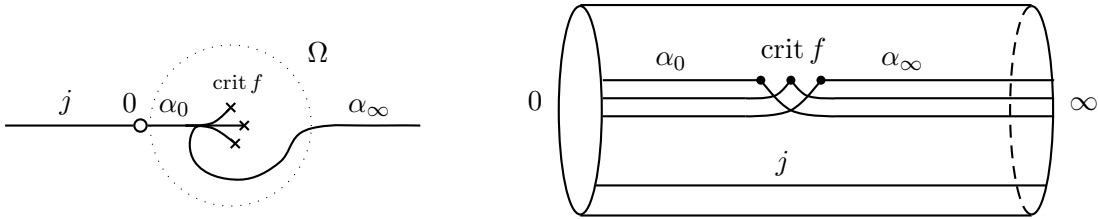

As mentioned in the introduction, it should follow from the construction
of $\alpha_0$ that its composition with the restriction functor
$\rho:\W((\C^*)^n\setminus H)\to \W(H)$ introduced in~\S \ref{s:hmscompl}
coincides with the ``restriction to the fiber'' functor described in
\S \ref{s:FS}: 
\begin{equation}\label{eq:rhoalpha0}
\rho\circ \alpha_0=\cap:\F^\circ((\C^*)^n,f)\to \W(H).
\end{equation}
While the statement is clear at the level of objects, the proof requires some work due to
the differences between the two constructions. A possible approach is to
consider Floer theory for admissible Lagrangians perturbed by suitably
chosen Hamiltonians on $(\C^*)^n\setminus H$ with linear growth near $H$,
whose flow extends over $H$ and wraps around it by a finite number of
turns $t$. In this setting, one can count rigid solutions to Floer's
equation with inputs away from $H$ and outputs in $H$ as in the
definition of $\cap$. For $0<t<1$ this gives $\cap$, while every time $t$
passes through an integer there is a bifurcation and the map changes by
composition with a degree 2 element of the (truncated) wrapped Floer
cohomology; observing that this element is yet another instance of the
natural transformation  $\Theta$ discussed in Remark \ref{rmk:nattrans},
the statement should then follow by taking the 
limit as $t\to \infty$.

The interpretation of \eqref{eq:rhoalpha0} under homological mirror symmetry
is as follows. By Conjectures \ref{conj:hmscompl} and \ref{conj:FS},  we
expect a commutative diagram
\begin{equation}\label{eq:diagrhoalpha0}
\begin{CD}
\F^\circ((\C^*)^n,f) @>\alpha_0>> \W((\C^*)^n\setminus H) @>\rho>> \W(H)\\
@V{\simeq}VV @V{\simeq}VV @VV{\simeq}V\\
D^b\Coh(Z_0) @>i_*>> D^b\Coh(Z) @>q>> D^b_\sg(Z)
\end{CD}
\end{equation}
where the vertical equivalences are instances of homological mirror
symmetry, and in the bottom row $i_*$ is the pushforward by the inclusion
map $i:Z_0\hookrightarrow Z$ and $q$ is the quotient by $\Perf(Z)$.
When \eqref{eq:linebundlecase} holds, $Y$ is the total space of the canonical
bundle of $Z_0$, and Orlov's work \cite{OrlovKnorrer,OrlovGraded} gives an equivalence $\varepsilon:
D^b_\sg(Z)\stackrel{\simeq}{\longrightarrow} D^b\Coh(D_0)$ where
$D_0$ is the union of the irreducible toric divisors of $Z_0$.
 It is not hard to check that the
composition $\varepsilon\circ q\circ i_*:D^b\Coh(Z_0)\to D^b\Coh(D_0)$ coincides with
pullback by the inclusion $i_{D_0}:D_0\hookrightarrow Z_0$.
Using \eqref{eq:rhoalpha0}, the diagram \eqref{eq:diagrhoalpha0} then reduces to
$$\begin{CD}
\F^\circ((\C^*)^n,f) @>\rho\alpha_0=\cap\ >> \W(H)\\
@V{\simeq}VV @VV{\simeq}V\\
D^b\Coh(Z_0) @>\varepsilon q i_*=i_{D_0}^*\ >> D^b\Coh(D_0)
\end{CD}
$$
which is precisely the content of Conjecture \ref{conj:hmsfiber}.

On the other hand, the identity $\rho\circ \alpha_\infty=0$ expresses the
property that the counterpart of $\alpha_\infty:\F^\circ((\C^*)^n,f)\to
\W((\C^*)^n\setminus H)$ under mirror symmetry is a functor from
$D^b\Coh(Z_0)$ to $D^b\Coh(Z)$ whose image is annihilated by the
quotient $q:D^b\Coh(Z)\to D^b_\sg(Z)$, i.e.\ it is contained in the
subcategory $\Perf(Z)$. 

The simplest instance of such a functor from $D^b\Coh(Z_0)$ to
$\Perf(Z)$ is the pullback by some projection map $\pi:Z\to Z_0$, when
one exists. For example, when
\eqref{eq:linebundlecase} holds, $Y$ is the total space of the canonical
bundle over $Z_0$ and we can take $\pi$ to be the restriction to $Z$ of the
projection $Y\to Z_0$.  Conjecture \ref{conj:FS} postulates that in this
case $\alpha_\infty$ as constructed above does indeed correspond to the
pullback $\pi^*$.

\subsection{The lifting functor $j$}\label{s:FSj}

We now construct a functor 
$j:\W(H)\to \W((\C^*)^n\setminus H)$ as follows. Choose a properly
embedded arc $\gamma$ in $\C^*$ that connects 0 to infinity and avoids the
critical values of $f$. Deforming the Liouville structure if necessary, we arrange for
the Liouville flow to be tangent
to $f^{-1}(\gamma)$ and pointing away from some interior fiber $f^{-1}(c_0)$,
$c_0\in\gamma$, which is also preserved by the wrapping Hamiltonian.

With this understood, given an object $L$ of $\W(H)$, we use parallel transport in the fibers of
$f$ over the arc $\gamma$  to obtain a
properly embedded Lagrangian submanifold $j(L)$ in \hbox{$(\C^*)^n\setminus H$}.
Moreover, we can ensure that, for any pair of objects $L_1,L_2\in \W(H)$, the generators of the
wrapped Floer complex $CW^*(j(L_1),j(L_2))$ which lie in the fiber
$f^{-1}(c_0)$ form a subcomplex isomorphic to $CW^*(L_1,L_2)$.
We define $j$ on morphisms via these inclusions of wrapped Floer complexes.

The functor $j$ is not canonical, as it depends on the choice of the
arc $\gamma$; the set of choices is essentially the same as for 
$\alpha_\infty$. When \eqref{eq:linebundlecase} holds there is a preferred
choice, namely we can take $\gamma$ to be the negative real axis
$(-\infty,0)$; see Figure \ref{fig:functors}. In the general case, 
we choose the arc $\gamma$ to be homotopic to the concatenation of the
arcs $\eta_0$ and $\eta_\infty$ used to define $\alpha_0$ and
$\alpha_\infty$ via a homotopy that does not cross any critical value of
$f$.

By construction (and for a suitable choice of grading conventions), 
$\rho j\simeq \mathrm{id}$, i.e.\ $j$ is a 
right (quasi)inverse to the restriction functor. 
Moreover, we expect to have an exact triangle
\begin{equation}\label{eq:jtriangle}
j\rho\alpha_0[-1] \to \alpha_\infty \to \alpha_0 \to
j \rho \alpha_0.
\end{equation}
Indeed, given any admissible Lagrangian $L\in \F^\circ((\C^*)^n,f)$, with boundary $\partial
L=\rho\alpha_0(L)\in \W(H)$, near $0$ (resp.\ $\infty$)  the ends of 
$\alpha_0(L)$ (resp.\ $\alpha_\infty(L)$) and $j(\partial L)$ are modelled
on the products of $\partial L$ with the positive and negative real axes.
With our grading conventions,
the family of Reeb chords that wrap halfway around the cylindrical end
gives rise to an element $\nu^0_L\in CW^0(\alpha_0(L),j(\partial L))$, 
resp.\ $\nu^\infty_L\in CW^1(j(\partial L),\alpha_\infty(L))$.
Meanwhile, the continuation map for the
isotopy $\psi$ of $f^{-1}(\Omega)$ induced by moving the reference fiber for
$\F^\circ((\C^*)^n,f)$ from $\epsilon$ to $2-\epsilon$ counterclockwise
along the lower half of $\partial\Omega$ determines an element of
$CF^0(\psi(L),L)$, which upon acceleration yields an element 
$\mu_L \in CW^0(\alpha_\infty(L), \alpha_0(L))$. It is then not hard to
check that
\begin{equation}\label{eq:jtriangleL}
j(\partial L)[-1]\stackrel{\nu^\infty_L}{\longrightarrow} \alpha_\infty(L)
\stackrel{\mu_L}{\longrightarrow} \alpha_0(L)
\stackrel{\nu^0_L}{\longrightarrow} j(\partial L)
\end{equation}
is an exact triangle in $\W((\C^*)^n\setminus H)$. This can be
viewed as an instance of the surgery exact triangle, observing that 
$\alpha_\infty(L)$ is Hamiltonian isotopic to the nontrivial component of the
Lagrangian obtained by wrapping $\alpha_0(L)$ halfway around the puncture
at zero so that it intersects $j(\partial L)$ cleanly along a copy of
$\partial L$, and performing Lagrangian surgery along these intersections
(see Figure \ref{fig:functors}).

We expect that the morphisms $\nu^\infty_L$, $\mu_L$ and $\nu^0_L$ are part of 
natural transformations between the functors $j\rho\alpha_0$,
$\alpha_\infty$ and $\alpha_0$, and the exact triangles
\eqref{eq:jtriangleL} assemble into the exact triangle \eqref{eq:jtriangle}.

On the mirror side, let $Z_{\neq 0}=\bigcup_{\alpha\neq 0} Z_\alpha$, 
and observe that we have a short exact sequence of sheaves
\begin{equation}\label{eq:ses_sheaves_Z0}
0\to \O_{Z_{\neq 0}}(-Z_0)\to \O_Z \to \O_{Z_0}\to 0,
\end{equation}
coming from the decomposition 
$Z=Z_{\neq 0} \cup Z_0$. We note that $Z_{\neq 0}\cap Z_0=D_0$ is the
union of the irreducible toric divisors of $Z_0$.

Assume that \eqref{eq:linebundlecase} holds. Then, recalling that $Y$ is the
total space of the canonical line bundle over $Z_0$ and observing that
$Z_{\neq 0}$ is the restriction of this line bundle to $D_0\subset Z_0$,
\eqref{eq:ses_sheaves_Z0} gives rise to an exact triangle of functors
$$\kappa\to \pi^*\to i_*\to \kappa[1]$$
where $\kappa:D^b\Coh(Z_0)\to D^b\Coh(Z)$ is the composition of restriction
to the anticanonical divisor $D_0=Z_{\neq 0}\cap Z_0$, pullback under the projection $\pi_{|Z_{\neq 0}}:Z_{\neq 0}\to
D_0$, twisting by $\O(-Z_0)$, and pushforward by the inclusion of $Z_{\neq 0}$ 
into $Z$. 
In this setting, $D^b_\sg(Z)\simeq D^b\Coh(D_0)$,
and restriction from $Z_0$ to $D_0$ corresponds to
$\rho\alpha_0$. The functor from $D^b\Coh(D_0)$ to $D^b\Coh(Z)$ consisting of
the remaining steps (pullback to $Z_{\neq 0}$, twisting by $\O(-Z_0)$, 
and pushforward to $Z$) is thus our conjectural counterpart to $j$ under
mirror symmetry, up to a grading shift. (This construction is very closely
related to Orlov's proof of the equivalence between $D^b\Coh(D_0)$ and 
$D^b_\sg(Z)$ \cite{OrlovKnorrer,OrlovGraded}.)

\subsection{Framings and gradings}

As mentioned in Remark \ref{rmk:rescale}, the construction of
the functors $\alpha_0,\alpha_\infty,j$ can be carried out using
a different ``framing'' of $H$, i.e.\ considering the defining equation
$x^{-\alpha} f$ instead of $f$, for any $\alpha\in A$. On the mirror 
side this amounts to considering the component $Z_\alpha$ of $Z$ 
instead of $Z_0$; this suggests that, taken together, the Fukaya categories
$\F^\circ((\C^*)^n,x^{-\alpha}f)$ for varying choices of framings give a
substantial amount of insight into $\W((\C^*)^n\setminus H)$.

Changing the framing of $H$ does not
affect the $\Z$-grading on $\W((\C^*)^n\setminus H)$, since the
chosen trivialization of the tangent bundle by restriction from $(\C^*)^n$
did not involve the Laurent polynomial $f$. On the other hand, it does modify the preferred choice
of grading on $\W(H)$, as the preferred trivialization of $\det(TH)$ is
induced from that of $\det(T(\C^*)^n)$ by interior product with $df$.

The analogue of this under mirror symmetry is the observation that, even
though $D^b\Coh(Z)$ has a canonical $\Z$-grading, its quotient $D^b_\sg(Z)$
does not. However, the choice of a $\C^*$-action on $Y$ (for which the
superpotential $W$ has weight~2) determines a $\Z$-grading
on $D^b_\sg(Z)$ \cite{OrlovGraded}.
For each choice of $\alpha\in A$, the divisor $Z_\alpha$ (or equivalently
the corresponding ray in the fan of $Y$) determines a $\C^*$-action, and
hence a $\Z$-grading on $D^b_\sg(Z)$. It is part of our general conjectural
setup that these gradings on $D^b_\sg(Z)$ match up with those on $\W(H)$.

Even though the restriction functor $\rho$ is only $\Z/2$-graded,
it admits a $\Z$-graded enhancement if one only considers framed
Lagrangians in the sense of Remark \ref{rmk:framed}. For instance,
the composition $\rho\alpha_0:\F^\circ((\C^*)^n,f)\to \W(H)$, which only
involves framed Lagrangians, is compatible with the $\Z$-gradings.  Similarly, the quotient functor
$q:D^b\Coh(Z)\to D^b_\sg(Z)$ is only $\Z/2$-graded, but its composition with
the inclusion pushforward,
$qi_*:D^b\Coh(Z_0)\to D^b_\sg(Z)$, is compatible
with the $\Z$-grading on $D^b_\sg(Z)$ determined by the divisor $Z_0$.

\section{Example: Local $\PP^n$}\label{s:Pn}

In this section, we consider the example where $H\subset (\C^*)^n$ is
the hypersurface defined by the Laurent polynomial
$$f(x_1,\dots,x_n)=x_1+\dots+x_n+\frac{\tau}{x_1\dots x_n}+1.$$
The polyhedral decomposition $\mathcal{P}$ (which gives the fan for $Y$),
the tropicalization of $f$ (which gives the moment polytope for $Y$), and the
Lefschetz fibration $f:(\C^*)^n\to \C$ are depicted (for $n=2$) on Figure \ref{fig:P2}.

One easily checks that $Y$ is the total space of the canonical bundle
$\O_{\PP^n}(-(n+1))$ over $\PP^n$. The facet $\Delta_0$ 
is a standard simplex, and $Z_0\simeq \PP^n$ is the zero
section, while the other components of $Z$ correspond to the total space of
$\O(-(n+1))$ over the various coordinate hyperplanes of
$\PP^n$, whose union forms the anticanonical divisor 
$D_0=\{z_0\dots z_n=0\}\subset \PP^n$.

\begin{figure}[b]
\setlength{\unitlength}{8mm}
\begin{picture}(2,3)(-1,-2)
\psset{unit=\unitlength}
\pscircle*(-1,-1){0.1}
\pscircle*(0,0){0.1}
\pscircle*(1,0){0.1}
\pscircle*(0,1){0.1}
\psline(0,1)(0,0)(1,0)(0,1)(-1,-1)(1,0)
\psline(0,0)(-1,-1)
\put(0.05,0.1){\tiny 0}
\put(0.5,-1){\tiny $\mathcal{P}$}
\end{picture}
\qquad\qquad
\setlength{\unitlength}{1.1cm}
\begin{picture}(2,2.5)(-1.6,-2.1)
\psset{unit=\unitlength}
\psline(-1.6,0.3)(-1,0)(0,0)(0,-1)(-1,0)
\psline(0,0)(0.4,0.4)
\psline(0.3,-1.6)(0,-1)
\put(-0.5,-0.4){\tiny $\Delta_0$}
\end{picture}
\qquad\qquad
\setlength{\unitlength}{1.5cm}
\begin{picture}(3,2.5)(-0.8,-0.5)
\psset{unit=\unitlength}
\put(-0.2,-0.15){\tiny 0}
\pscircle*(1.2,0){0.04}
\pscircle*(0.9,0.173){0.04}
\pscircle*(0.9,-0.173){0.04}
\psline(1.2,0)(2,0)
\pscurve(0.9,-0.173)(1.2,-0.25)(2,-0.27)
\pscurve(0.9,0.173)(1.2,0.25)(2,0.27)
\put(2.05,-0.05){\tiny $L_0^\infty$}
\put(2.05,-0.3){\tiny $L_{-1}^\infty$}
\put(2.05,0.2){\tiny $L_1^\infty$}
\pscurve(1.2,0)(1.2,-0.3)(0.7,-0.45)(0.3,-0.1)(0,0)
\pscurve(0.9,-0.173)(0.7,-0.15)(0.4,-0.05)(0,0)
\pscurve(0.9,0.173)(0.7,0.15)(0.4,0.05)(0,0)
\pscircle[fillstyle=solid,fillcolor=white](0,0){0.06}
\put(0.2,-0.4){\tiny $L_0^0$}
\put(0.6,-0.35){\tiny $L_{-1}^0$}
\put(0.4,0.25){\tiny $L_{-2}^0$}
\pscurve(-0.1,2)(-0.2,1.5)(-0.5,0.8)
\pscurve(0.1,2)(0.2,1.5)(0.5,0.8)
\pscurve(-0.3,0.75)(0,0.95)(0.3,0.75)
\pscurve(-0.03,1.45)(0.05,1.35)(0.05,1.2)(-0.03,1.1)
\pscurve(0,1.4)(-0.05,1.33)(-0.05,1.22)(0,1.15)
\psellipse(0,2)(0.11,0.05)
\psellipse(-0.4,0.77)(0.12,0.05)
\psellipse(0.4,0.77)(0.12,0.05)
\psellipse[linewidth=0.4pt](0,1.275)(0.15,0.25)
\psellipticarc[linewidth=0.4pt](0,1.275)(1,0.25){-90}{90}
\psline(0.9,1.05)(1.1,1.5)
\psline(1.1,1.05)(0.9,1.5)
\pscircle*(1,1.275){0.04}
\put(1.1,1.25){\tiny crit\,$f$}
\put(0.55,1.6){\tiny $L_k^0$}
\put(-0.5,1.35){\tiny $H$}
\psline{->}(0,0.7)(0,0.2)
\put(-0.2,0.4){\tiny $f$}
\end{picture}
\caption{Example: $f(x_1,x_2)=x_1+x_2+\dfrac{\tau}{x_1x_2}+1$ and $Y=\O(-3)\to \PP^2$.}
\label{fig:P2}
\end{figure}

In \cite{AbToric}, Abouzaid constructs admissible Lagrangian submanifolds
$L_k$ of $(\C^*)^n$ which are sections of the logarithm map over
$\Delta_0\subset \R^n$, with boundary in
$$f^{-1}(2)=\{x_1+\dots+x_n+\tau x_1^{-1}\dots x_n^{-1}-1=0\}.$$
In the tropical limit $\tau\to 0$, $L_k$ is defined by
$\arg(x_j)=-2\pi k \log(|x_j|)$ for $j=1,\dots,n$ (where logarithms are taken in base
$\tau^{-1}$). Abouzaid shows the existence of an equivalence
$\F^\circ((\C^*)^n,f)\simeq D^b\Coh(\PP^n)$ under which $L_k$ corresponds to
$\O(k)$. 

An alternative description in terms of Lefschetz thimbles is as follows. 
The Laurent polynomial $f$ has $n+1$ critical points, located at
$x_1=\dots=x_n=\tau^{1/(n+1)} e^{2\pi i k/(n+1)}$, and the corresponding
critical values are $c_k=1+(n+1)\tau^{1/(n+1)} e^{2\pi i k/(n+1)}$. The
Lagrangian $L_k$ is then Hamiltonian isotopic to the
Lefschetz thimble associated to the arc $\gamma_k$ which runs from the
critical value
$c_k$ to the reference point 2 by first moving radially away from $1$, then clockwise by an angle
of $2\pi k/(n+1)$ to reach the real positive axis, then radially outwards again.
(The category $\F^\circ((\C^*)^n,f)$ is generated by the exceptional
collection $L_0,\dots,L_n$, but
one can just as well consider $L_k$ for all $k\in \Z$.)

The properly embedded Lagrangian submanifolds
$L_k^\infty=\alpha_\infty(L_k)$ can then be described as
Lefschetz thimbles for arcs $\gamma_k^\infty$ isotopic to the union of $\gamma_k$ with the 
interval $[2,+\infty)$ in the real axis; see Figure \ref{fig:P2} right.
In particular, $L_0^\infty$ is simply the real positive locus,
$L_0^\infty=(\R_+)^n\subset (\C^*)^n\setminus H$, which is consistent with
our expectation that it corresponds under mirror symmetry to the pullback
of $\O_{\PP^n}$ under the projection $\pi:Z\to Z_0=\PP^n$, i.e.\ $\O_Z$. 
More generally, for $|k|<(n+1)/2$ the arc
$\gamma_k^\infty$ is isotopic to a radial straight line from $c_k$ to infinity,
so we can take $L_k^\infty=(e^{2\pi i k/(n+1)}\,\R_+)^n$.

Meanwhile, the Lagrangian submanifolds $L_k^0=\alpha_0(L_k)$ are Lefschetz
thimbles for arcs $\gamma_k^0$ isotopic to the union of $\gamma_k$ with the
lower half of the unit circle centered at 1, i.e., running from
the critical value $c_k$ to the origin by moving radially away from 1 then
clockwise by an angle of $\pi + 2\pi k/(n+1)$, as shown in Figure
\ref{fig:P2}. In fact, $L_k^0$ can be described directly as a section of the
logarithm map over $\Delta_0$, with boundary in $f^{-1}(0)=H$, by modifying
Abouzaid's construction to account for the sign change. Namely, in the
tropical limit, $L_k^0$ is the Lagrangian section over $\Delta_0$  defined by
$$\arg(x_j)=-(2k+n+1)\pi \log(|x_j|)+\pi \ \text{for} \ j=1,\dots,n.$$

Conjecture \ref{conj:FS} predicts that $L_k^\infty$ corresponds under mirror
symmetry to the pullback $\pi^*\O_{\PP^n}(k)$, which is a line bundle over
$Z$ that we denote by $\O_Z(k)$, while $L_k^0$ corresponds to
$i_*\O_{\PP^n}(k)=\O_{Z_0}(k)$. Calculations of the wrapped Floer cohomology
groups of these Lagrangians inside $(\C^*)^n\setminus H$ (which are fairly
straightforward using knowledge of homological mirror symmetry for $\PP^n$
and the Lefschetz thimble descriptions) confirm these predictions. 
For instance, we have ring isomorphisms
\begin{multline*}
HW^*(L_0^\infty,L_0^\infty)\simeq \bigoplus_{d\ge 0}
\C[z_0,\dots,z_n]_{(n+1)d}/(z_0\dots z_n)\\[-10pt] \simeq \bigoplus_{d\ge 0}
H^0(D_0,\O(d(n+1)))\simeq H^0(Z,\O_Z).\end{multline*}
Namely, the homogeneous polynomials of degree $(n+1)d$ correspond to the
Reeb chords from $L_0^\infty$ to itself that wrap $d$ times around infinity
under projection by $f$.
Indeed, the Reeb chords are the same as in $(\C^*)^n$,
where the image of $L_0$ under wrapping $d$ times around infinity is isotopic
to $L_{(n+1)d}$.
By Abouzaid~\cite{AbToric} the boundary intersections between these two 
admissible Lagrangians correspond to monomials of degree $(n+1)d$ which are
not divisible by $z_0\dots z_n$. However, when computing the
product structure in $\W((\C^*)^n\setminus H)$ one should discard all
holomorphic discs whose projection under $f$ passes through the origin;
these correspond exactly to all product operations in $\W((\C^*)^n)$ 
where the projection under $f$ of the output Reeb chord wraps around 
infinity fewer times than the sum of the inputs, i.e.\ all those cases
where the product of two monomials is divisible by $z_0\dots z_n$.
A similar argument shows that
\begin{multline*}
HW^*(L_j^\infty,L_k^\infty)\simeq 
H^*(\PP^n,\O(k-j))\oplus \bigoplus_{d>0} H^*(D_0,\O(d(n+1)+k-j))\\[-8pt]
\simeq H^*(Z,\O_Z(k-j)).
\end{multline*}
Meanwhile, a similar calculation around the puncture at the origin
(recalling that the monodromy around zero is trivial up to a grading shift
by 2) shows that
$$HW^*(L_j^0,L_k^0)\simeq H^*(\PP^n,\O(k-j))\oplus \bigoplus_{d>0}
H^{*-2d}(D_0,\O(k-j))\simeq \mathrm{Ext}^*(\O_{Z_0}(j),\O_{Z_0}(k)),$$
while
$$HW^*(L_j^\infty,L_k^0)\simeq H^*(\PP^n,\O(k-j))\simeq
\mathrm{Ext}^*(\O_Z(j),\O_{Z_0}(k)).\bigskip$$

Let us now modify the framing and
treat $H$ as the zero set of 
$$f'=\tau^{-1}x_1\dots x_n f=\tau^{-1}x_1\dots x_n(x_1+\dots+x_n+1)+1.$$
The only critical point of $f'$ is at $x_1=\dots=x_n=-\frac{1}{n+1}$,
which lies outside of the region where the constant term dominates. However,
the fibers of $f'$, which are degree $n+1$ affine hypersurfaces
(e.g.\ three-punctured elliptic curves for $n=2$),
degenerate at the special value $1$, and
$f'{}^{-1}(1)=\{x_1+\dots+x_n+1=0\}$ is an $(n-1)$-dimensional pair of pants.

Choosing $f'{}^{-1}(2)$ as reference fiber for $\F^\circ((\C^*)^n,f')$,
Abouzaid's construction provides admissible Lagrangian submanifolds $L'_k$ which are
sections of the logarithm map over the (unbounded) region $\Delta'_0\subset
\R^n$ where the constant term of $f'$ dominates (on Figure \ref{fig:P2},
this is the lower-left component). In the tropical limit, $L'_k$ corresponds
again to $\arg(x_j)=-2\pi k\log(|x_j|)$. Writing the standard
symplectic form of $(\C^*)^n$ as $\omega_0=\sum dr_j\wedge
d\theta_j$, where $x_j=\exp(r_j+i\theta_j)$,
$L'_k$ is the image of $L'_0\subset (\R_+)^n$ by the
time 1 flow of the Hamiltonian $\varphi=-\pi k \sum r_j^2$.
This Hamiltonian has quadratic growth, and hence for $k\neq 0$
$L'_k$ is not conical at infinity (as $x_j\to 0$).

This can be fixed by the judicious use of cut-off functions to achieve 
linear growth. Namely, splitting $\varphi$ into the sum of
$\varphi_1=-\pi k \sum (r_j+\frac{1}{n+1})^2$ and
$\varphi_2=\frac{2\pi k}{n+1}\sum r_j+\frac{n\pi k}{n+1}$,
we replace $\varphi_1$ by a function $\tilde\varphi_1=
\tilde\varphi_1(|\vec{r}-\vec{r}_0|)$ of the Euclidean distance between
$\vec{r}=(r_1,\dots,r_n)$ and
$\vec{r}_0=(-\frac{1}{n+1},\dots,-\frac{1}{n+1})$,
which grows quadratically up to a certain point and linearly at infinity.
Because $\tilde\varphi_1$ only depends on $|\vec{r}-\vec{r}_0|$, it is still the case that along
the unbounded facet of $\Delta'_0$ where $2r_1+r_2+\dots+r_n=-1$, we have the equality
$\arg(x_1^2x_2\dots
x_n)=\partial_{(2,1,\dots,1)}(\tilde{\varphi}_1+\varphi_2)=2\pi k$, which is
the key property needed to ensure that $\partial L'_k\subset f'{}^{-1}(2)$.
Similarly for the other facets of $\Delta'_0$. The linear growth of
$\tilde{\varphi}_1$ along radial straight lines from $\vec{r}_0$ 
(and the overall linearity of $\varphi_2$, whose effect is simply to rotate
$\theta_j$ by $\frac{2\pi k}{n+1}$) implies that, after this modification,
$L'_k$ is conical at infinity with respect to the Liouville structure
$\lambda=\sum (r_j+\frac{1}{n+1})\,d\theta_j$.

A Floer homology calculation (wrapping by a Hamiltonian that grows
quadratically with $|\vec{r}-\vec{r}_0|$, and otherwise imitating Abouzaid's
arguments \cite{AbToric,AbToric2}) shows that the subcategory of
$\F^\circ((\C^*)^n,f')$ generated by the admissible Lagrangians $L'_k$
is equivalent to $D^b\Coh(Z'_0)$, where $Z'_0$ is the distinguished
component of $Z$, i.e.\ the total space of the line bundle $\O(-(n+1))\to
\PP^{n-1}$. The Lagrangian $L'_k$ corresponds under this equivalence to the
line bundle $\O_{Z'_0}(k)$.

We can extend $L'_k$ to a properly embedded Lagrangian 
$L'{}_k^\infty\subset (\C^*)^n\setminus H$ which is a section of the
logarithm map over all of $\R^n$ (rather than just $\Delta'_0$), by setting
$\arg(x_j)=-2k\pi \log(|x_j|)$ over a large bounded subset of $\R^n$, 
and arranging for $L'{}_k^\infty$ to be conical at infinity by the same cut-off trick as for
$L'_k$. By construction, each monomial of $f'$ is
real positive along the tropical hypersurface, and hence the leading order terms
of $f'$ cannot cancel out; this in turn implies that $L'{}_k^\infty$ is disjoint
from $H$.

It is not hard to check that $L'{}_k^\infty$ is Hamiltonian isotopic
to $L_k^\infty$, and hence isomorphic to it as an object of $\W((\C^*)^n\setminus
H)$.  Thus, $L'{}_k^\infty$ corresponds to the unique way (for $n\ge 2$) to extend
$\O_{Z'_0}(k)$ to a line bundle over all of $Z$, $\O_Z(k)$. However, the
images of $L'{}_k^\infty$ under $f'$ go to infinity along paths that
lie in different homotopy classes relative to the critical value of $f'$;
it might be the case that no choice of arc $\eta_\infty$ in the construction of the acceleration 
functor $\alpha_\infty$ gives $\alpha_\infty(L'_k)\simeq L'{}_k^\infty$ for
all $k$. 

The reason for this is particularly apparent in the case $n=1$.
where the admissible Lagrangians $L'_k$ are arcs connecting 
the origin to the point of $f'^{-1}(2)$ which lies near $x=\tau$, 
inside the region of $\C^*$ where $|x|\le \tau$. These arcs are all isotopic and
represent isomorphic objects of $\F^\circ(\C^*,f')\simeq D^b\Coh(\C)$,
as expected given that $Z'_0\simeq \C$ and hence $\O_{Z'_0}(k)\simeq
\O_{Z'_0}$. On the other hand, the arcs $L'{}_k^\infty$ that connect the origin
to infinity in $\C^*\setminus H$ are definitely not isotopic to each other:
for example, the intersection number of $L'{}_k^\infty$ with the portion of 
the negative real axis that lies between the two points of $H$ (located near
$-1$ and $-\tau$) is equal to $k$. This is consistent with mirror symmetry,
since the line bundles $\O_Z(k)$ are pairwise non-isomorphic. And, of
course, no functor from $\F^\circ(\C^*,f')$ to $\W((\C^*)\setminus H)$
(resp.\ $D^b\Coh(Z'_0)$ to $D^b\Coh(Z)$) can map the isomorphic objects
$L'_k$ (resp.\ $\O_{Z'_0}(k)$) to the non-isomorphic $L'{}_k^\infty$ (resp.\
$\O_Z(k)$).

On the other hand, this issue does not arise for the other acceleration 
functor $\alpha_0$. Namely, the Lagrangians $L'{}_k^0=\alpha_0(L'_k)$ can be
constructed by modifying the sign conventions in the definition of $L'_k$
in order to obtain admissible Lagrangians with boundary in $f'^{-1}(0)=H$.
Specifically, $L'{}_k^0$ is again a section of the logarithm map over
$\Delta'_0\subset \R^n$; in the tropical limit, we take $L'{}_k^0$ to be
defined by
$$\arg(x_j)=-(2k-1)\pi\log(|x_j|)\quad \text{for}\ j=1,\dots,n$$
over a large bounded subset of $\Delta'_0$, and use cut-off functions as in
the construction of $L'_k$ in order to make $L'{}_k^0$ conical at infinity.

For $n=1$ the arcs $L'{}_k^0$ connecting the origin to the point of
$H$ which lies near $x=-\tau$ are all isotopic to each other in
$\C^*\setminus H$, as expected, whereas for $n\ge 2$ these are genuinely
different objects of $\W((\C^*)^n\setminus H)$. For $n=2$ one can check explicitly that
$L'{}_0^k$ corresponds under mirror symmetry to $i_*\O_{Z'_0}(k)$;
presumably this remains true for all $n$.

\section{Higher dimensional pairs of pants}\label{s:pants}

\subsection{Setup and notations}
We consider the pair of pants $H=\Pi_{n-1}\subset(\C^*)^n$, defined by the
equation $$f(x_1,\dots,x_n)=x_1+\dots+x_n+1=0.$$
One easily checks that our construction gives
$(Y,W)=(\C^{n+1},-z_1\dots z_{n+1})$. Thus
$Z$ is the union of the coordinate hyperplanes
$Z_i:\{z_i=0\}$, where the distinguished component (previously called $Z_0$) that corresponds
to the constant term in $f$ is $Z_{n+1}$, while the
other components $Z_1,\dots,Z_n$ correspond to the monomials $x_1,\dots,x_n$.
For $I\subseteq \{1,\dots,n+1\}$, we set $Z_I=\bigcup_{i\in I} Z_i$.

\begin{figure}[b]
\setlength{\unitlength}{1.5cm}
\begin{picture}(3,3)(-2,-2.3)
\psset{unit=\unitlength}
\psline(-2,0)(0.8,0)
\psline(0,-2)(0,0.8)
\psline(0.7,-1.7)(-1.8,0.8)
\psline{->}(1,-1)(1.45,-1.45)
\psline{->}(0.8,-1.8)(0.95,-1.95)
\put(1.1,-1){\tiny $x_1+x_2+1$}
\psline(0.9,-2.3)(1.9,-1.3)
\pscircle*(1.1,-2.1){0.05}
\pscircle*(1.6,-1.6){0.05}
\put(1.2,-2.2){\tiny $0$}
\put(1.7,-1.7){\tiny $1$}
\put(-2,0.3){\small $L_1$}
\put(-1.1,-1.1){\small $L_{12}$}
\put(-0.4,-0.4){\small $L_3$}
\put(0.2,-1.9){\small $L_2$}
\put(-0.9,0.4){\small $L_{23}$}
\put(0.3,0.3){\small $L_{123}=L_0$}
\put(0.3,-0.8){\small $L_{13}$}
\put(0.27,-1.5){\tiny $\ell_2$}
\put(-0.73,-0.55){\tiny $\ell_{1\hspace*{-1pt}2}$}
\put(-1.42,0.2){\tiny $\ell_1$}
\psellipticarc[linewidth=0.5pt](-1.8,0)(0.05,0.1){45}{-45}
\put(-2,-0.2){\tiny $z_2$}
\psellipticarc[linewidth=0.5pt](0,-1.8)(0.1,0.05){135}{45}
\put(-0.25,-1.95){\tiny $z_1$}
\psellipticarc[linewidth=0.5pt](-1.6,0.6)(0.1,0.05){165}{90}
\put(-1.5,0.65){\tiny $z_0$}
\end{picture}
\caption{$\Pi_2\simeq (\C^*)^2\setminus \Pi_1$ and the components of
its real locus.}\label{fig:Pi2}
\end{figure}
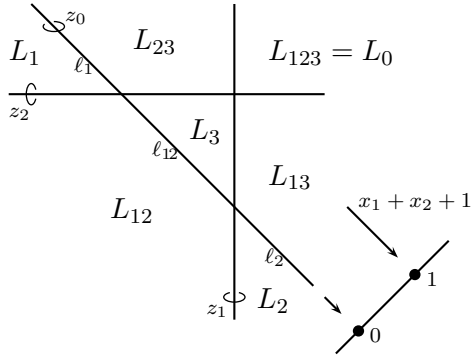

The complement
$(\C^*)^n\setminus \Pi_{n-1}$ is isomorphic to the higher dimensional
pants $\Pi_n$, whose wrapped Fukaya category we aim to study using the ideas introduced in
Sections \ref{s:hmscompl}--\ref{s:conjFS}. The key Lagrangian submanifolds
of interest to us will be the $2^{n+1}-1$ connected components $L_I$ of the real locus of
$\Pi_n=(\C^*)^n\setminus \Pi_{n-1}$, which we label by proper non-empty subsets
$I\subset\{0,\dots,n+1\}$, up to the equivalence
relation that identifies each subset with the complementary subset of
 $\{0,\dots,n+1\}$.
We usually choose the representative which does not contain the
element $0$ as the ``canonical'' label, except in the case of
$\{1,\dots,n+1\}\sim \{0\}$.
See Figure \ref{fig:Pi2} for the case $n=2$.

The labelling is as follows: (1) the positive
orthant $(\R_+)^n\subset (\C^*)^n\setminus \Pi_{n-1}$ is labelled
$L_{\{1,\dots,n+1\}}=L_{\{0\}}$; (2) whenever two components of the real
locus are adjacent to each other across the hyperplane $x_i=0$ ($1\le i\le n$), their
labelling sets differ exactly by adding or removing the element $i$; (3)
whenever two components are adjacent to each other across $\Pi_{n-1}$,
their labels differ by adding or removing the element $0$ (or equivalently,
they are labelled by complementary subsets of $\{1,\dots,n+1\}$).

A more symmetric viewpoint embeds the picture into $\PP^{n+1}$ with
homogeneous coordinates $(x_1\!:\!\dots\!:\!x_{n+1}\!:\!x_0)$ as follows.
We can realize $\Pi_n$ as the intersection of the hyperplane
$\{x_1+\dots+x_{n+1}+x_0=0\}\subset \PP^{n+1}$ with the complement of
the $n+2$ coordinate hyperplanes $x_i=0$; the identification with
$(\C^*)^n\setminus H$ is by mapping $(x_1\!:\!\dots\!:\!x_n\!:\!1\!:x_0)$ to
$(x_1,\dots,x_n)$. For $I\subset \{0,\dots,n+1\}$, 
$L_I$ is the set of real points that admit homogeneous coordinates
$(x_1\!:\!\dots\!:\!x_{n+1}\!:\!x_0)$
(with their sum equal to zero) satisfying $x_i>0$ for $i\in I$ and $x_i<0$ for
$i\not\in I$.

For $I\subseteq \{1,\dots,n+1\}$, the image of $L_I$ under the
logarithm map covers exactly those components of $\R^n\setminus 
\mathrm{Log}(\Pi_{n-1})$ that correspond to the elements of $I$ (recalling
that $n+1$ corresponds to the region where the constant term of $f$ dominates). Thus, as a
general principle we expect that under mirror symmetry $L_I$ corresponds to
an object of $D^b\Coh(Z)$ which is supported on $Z_I=\bigcup_{i\in I} Z_i$
and whose restriction to each $Z_i$, $i\in I$ is a (trivial) line bundle:
\begin{conj}\label{conj:pants}
There is an equivalence $\W(\Pi_n)\simeq D^b\Coh(Z)$ under which
the objects $L_I$ map to $\O_{Z_I}$ for all non-empty $I\subseteq
\{1,\dots,n+1\}$.
\end{conj}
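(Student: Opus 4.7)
The natural approach is induction on $n$, with the base case $n=1$ supplied by \cite{AAEKO} and reviewed in \S\ref{s:pants1}. For the inductive step, assume the statement for $\Pi_{n-1}$. Combined with Kn\"orrer periodicity and Remark~\ref{rmk:periodlinebundlecase}, this gives $\W(\Pi_{n-1})\simeq D^b_{\sg}(Z)\simeq D^b\Coh(D_0)$, where $D_0\subset Z_{n+1}\simeq \C^n$ is the union of the toric boundary divisors. For the pants, $f$ has no critical points, $\F^\circ((\C^*)^n,f)=\F((\C^*)^n,f)$, and hypothesis~\eqref{eq:linebundlecase} is satisfied.

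The strategy is to realize both sides of the desired equivalence as parallel pushouts of $A_\infty$-categories. On the symplectic side, Sylvan's gluing formula (Remark~\ref{rmk:sylvan}) produces
\[
\W(\Pi_n)\;\simeq\;\F((\C^*)^n,f)\,\sqcup_{\W(\Pi_{n-1})}\,\F(\C^*\times\Pi_{n-1},z),
\]
in which the two arrows out of $\W(\Pi_{n-1})$ are the standard ``stop-inclusion'' (or cap) functors, and the arrows into $\W(\Pi_n)$ are (up to technicalities) $\alpha_\infty$ and $j$. On the algebraic side, the decomposition $Z=Z_{n+1}\cup Z_{\{1,\dots,n\}}$ with intersection $D_0$, together with the product structure $Z_{\{1,\dots,n\}}\cong D_0\times\C$ (the $\C$ factor being the $z_{n+1}$-axis), yields a Mayer--Vietoris description
\[
D^b\Coh(Z)\;\simeq\;D^b\Coh(Z_{n+1})\,\sqcup_{D^b\Coh(D_0)}\,D^b\Coh(Z_{\{1,\dots,n\}}).
\]
The three corners of these pushouts match under known HMS statements: Abouzaid's theorem~\eqref{eq:abouzaid} gives $\F((\C^*)^n,f)\simeq D^b\Coh(Z_{n+1})$; the inductive hypothesis gives $\W(\Pi_{n-1})\simeq D^b\Coh(D_0)$; and extending scalars along $\W(\C^*)\simeq \Perf(\C[t])$ identifies $\F(\C^*\times\Pi_{n-1},z)\simeq D^b\Coh(D_0)\otimes_\C \C[t]\simeq D^b\Coh(Z_{\{1,\dots,n\}})$.

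What remains is to match the gluing functors. Under these identifications, the stop-inclusion $\W(\Pi_{n-1})\to\F((\C^*)^n,f)$ (related to $\cap$ by adjunction) should correspond to the derived restriction $i_{D_0}^*:D^b\Coh(Z_{n+1})\to D^b\Coh(D_0)$, which is Conjecture~\ref{conj:hmsfiber} in this setting; while the other stop-inclusion $\W(\Pi_{n-1})\to\F(\C^*\times\Pi_{n-1},z)$ is the ``tensor with $\C[t]/t$'' functor of Remark~\ref{rmk:sylvan}, mirror to pushforward along $D_0\cong D_0\times\{0\}\hookrightarrow D_0\times\C = Z_{\{1,\dots,n\}}$. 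Once both gluing squares commute up to canonical natural isomorphism, the universal property of pushouts delivers $\W(\Pi_n)\simeq D^b\Coh(Z)$. The object correspondence $L_I\leftrightarrow\O_{Z_I}$ is then checked case by case: for $n+1\in I$ the Lagrangian $L_I$ is $\alpha_\infty$ of an admissible Lagrangian mirror to $\O_{Z_{I\setminus\{n+1\}}\subset Z_{n+1}}$, and Conjecture~\ref{conj:FS}(2) (identifying $\alpha_\infty$ with $\pi^*$) yields $\pi^*\O_{Z_{I\setminus\{n+1\}}}=\O_{Z_I}$; for $n+1\notin I$, we have $L_I\simeq j(\ell_I)$ with $\ell_I$ corresponding by induction to $\O_{Z^{(n-1)}_I}\simeq \O_{D_{0,I}}$, and the mirror lift of $j$ described in \S\ref{s:FSj} sends this to $\O_{Z_I}$.

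The principal obstacle is foundational: the symplectic pushout rests on the Ganatra--Pardon--Shende \cite{GPS} / Sylvan \cite{Sylvan,Sylvan2} Liouville-sector gluing formalism being available in a form applicable here, and on a careful chain-level comparison between the various constructions of $\F^\circ((\C^*)^n,f)$, $\W(\Pi_n)$, and the functors $\alpha_\infty$, $j$, and $\cap$. A secondary but more tractable hurdle is establishing Conjecture~\ref{conj:hmsfiber} in the present setting, which reduces to HMS for the toric pair $(\C^n,D_0)$ and should be within reach of Abouzaid's tropical methods \cite{AbToric2}. By contrast, the Mayer--Vietoris pushout description of $D^b\Coh(Z)$ and the identification of generators $\O_{Z_I}$ are elementary once the equivalence of categories is in place.
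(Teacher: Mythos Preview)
Your approach is genuinely different from the paper's. The paper proceeds by direct Floer-theoretic computation: Propositions \ref{prop:HWLI}--\ref{prop:HWLIJprod} explicitly calculate $HW^*(L_I,L_J)$ and the product structure by analyzing Reeb chords and holomorphic triangles near the strata $\Sigma_K$; Proposition \ref{prop:exactuIJK} then establishes the exact triangles by induction via the lifting functor $j$. The remaining steps---split-generation of $\W(\Pi_n)$ by the $L_{\{i\}}$, and uniqueness of the $A_\infty$-structure compatible with the grading, homology constraints, and the exact triangles---are left as further conjectures at the end of \S\ref{s:pantscomp}, the first argued via Sylvan's pushout and the second established only for $n=1$. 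You instead bypass explicit computation and attempt to deduce the equivalence from a categorical gluing argument, matching Sylvan's pushout on the A-side with a Mayer--Vietoris pushout on the B-side.

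Your plan is conceptually appealing but understates one difficulty. The claim that $D^b\Coh(Z)$ is a homotopy \emph{pushout} of $D^b\Coh(Z_{n+1})$ and $D^b\Coh(Z_{\{1,\dots,n\}})$ along pushforward functors from $D^b\Coh(D_0)$ is not the standard descent statement (which is a homotopy \emph{pullback} along restriction functors), and is certainly not elementary. It may well hold in this affine setting, but proving it amounts to showing that $\O_{Z_{n+1}}$ and $\O_{Z_{\{1,\dots,n\}}}$ generate $D^b\Coh(Z)$ and that the relations between the two subcategories are exactly those coming from $D_0$---which is essentially the B-side analogue of the paper's split-generation and $A_\infty$-uniqueness conjectures, not a shortcut around them. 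Your object matching for $n+1\in I$ is also imprecise: $\F^\circ((\C^*)^n,f)$ is generated by a single admissible Lagrangian $L_{adm}$ with $\alpha_\infty(L_{adm})=L_{\{0\}}$, so the other $L_I$ with $n+1\in I$ are not directly images of $\alpha_\infty$ on objects and must instead be recovered from the exact triangles \eqref{eq:exactuIJK}. Both sketches ultimately rest on the Sylvan/GPS gluing formalism and on conjectures stated in the paper; the paper's explicit Floer calculations, however, provide independent corroboration that your purely categorical route does not.
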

We now explore how this prediction
fits with Conjectures \ref{conj:hmscompl} and \ref{conj:FS}; a proof of
Conjecture \ref{conj:pants} is sketched in Section
\ref{s:pantscomp} below.

\subsection{Restriction and lifting}
For non-empty $I\subseteq \{1,\dots,n\}$, denote by $\ell_I\subset \Pi_{n-1}$ 
the components of the real locus of the $(n-1)$-dimensional pair of pants,
labelled as above. More precisely, embedding $\Pi_{n-1}$ into $\PP^n$ with
homogeneous coordinates $(y_1\!:\!\dots\!:\!y_n\!:\!y_0)$ as the
intersection of the hypersurface $\{y_1+\dots+y_n+y_0=0\}$ with the
complement of the $n+1$ coordinate hyperplanes, we define $\ell_I$ to be
the set of real points where $y_i>0$ iff $i\in I$.
Meanwhile, embedding $\Pi_n$ into $\PP^{n+1}$ as
above, $f$ can be expressed in homogeneous coordinates as 
$(x_1\!:\!\dots\!:\!x_{n+1}\!:\!x_0)\mapsto -x_0/x_{n+1}$. The zero set
$\Pi_{n-1}=f^{-1}(0)$ then corresponds to setting $x_0=0$ (while still
requiring the other coordinates to be non-zero), i.e.\ in
projective coordinates we use the embedding 
\begin{equation}\label{eq:pantsembed}
(y_1\!:\!\dots\!:\!y_n\!:\!y_0)\mapsto
(x_1\!:\!\dots\!:\!x_{n+1}\!:\!x_0)=(y_1\!:\!\dots\!:\!y_n\!:\!y_0\!:\!0).
\end{equation}

By construction, for $I\subseteq \{1,\dots,n\}$, the portion of the boundary of
$L_I$ that lies on the hyperplane $x_0=0$ is exactly $\ell_I$, and similarly
for $L_{I\cup \{0\}}$; see Figure \ref{fig:Pi2}. Using Remark \ref{rmk:framed}, the
images of the objects $L_I$ under the
($\Z/2$-graded) restriction functor $\rho:\W(\Pi_n)\to \W(\Pi_{n-1})$ defined in \S \ref{s:hmscompl}
are therefore as follows:
\begin{lemma}\label{l:restrpants}
For all non-empty $I\subseteq \{1,\dots,n\}$, $\rho(L_I)\cong\ell_I$, while
$\rho(L_{I\cup \{0\}})\cong\ell_I[1]$, and $\rho(L_{\{0\}})=0$.
\end{lemma}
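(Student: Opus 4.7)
The plan is to analyze each $L_I$ near the deleted hypersurface $H=\Pi_{n-1}$ using Remark \ref{rmk:framed}. The essential point is that, via the embedding $(\C^*)^n\setminus H\simeq\{\hat{f}=0\}\subset(\C^*)^{n+1}$ with $\hat{f}=f+x_{n+1}$, the collar neighborhood $H\times\C^*$ of $H$ uses $-f$ as the coordinate on its $\C^*$ factor; in the projective picture of \S \ref{s:pants}, this coordinate is precisely $x_0$, and the affine chart $x_{n+1}=1$ realizes $x_0=-f$. Consequently, the framing $i(\ell)=\ell\times\R_+$ entering the definition of the projection functor $p$ corresponds to Lagrangians approaching $H$ along the ray $\{x_0>0\}$, equivalently along $\arg f=\pi$.

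For $L_{\{0\}}=(\R_+)^n$ the polynomial $f=x_1+\cdots+x_n+1$ takes values in $[1,+\infty)$, so this Lagrangian is uniformly bounded away from $H$; its restriction $r(L_{\{0\}})\in\W(H\times\C^*)$ vanishes, and $\rho(L_{\{0\}})=0$. For non-empty $I\subseteq\{1,\dots,n\}$, the canonical projective representative of $L_I$ has $x_i>0$ iff $i\in I$; since neither $n+1$ nor $0$ belongs to $I$, both $x_{n+1}$ and $x_0$ are negative, so rescaling to $x_{n+1}=1$ (which multiplies all coordinates by a negative number) yields $x_0>0$ in the affine chart near $H$. Thus $L_I$ is framed along $\R_+$ in the $\C^*$ factor; by Remark \ref{rmk:framed}, $r(L_I)\cong\ell_I\times\R_+=i(\ell_I)$, whence $\rho(L_I)=p(i(\ell_I))\cong\ell_I$. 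The representative of $L_{I\cup\{0\}}$ instead has $x_0>0$ (since $0\in I\cup\{0\}$) while $x_{n+1}<0$, so after the same rescaling $x_0<0$ in the affine chart; hence $L_{I\cup\{0\}}$ is framed along $\R_-$ and $r(L_{I\cup\{0\}})\cong\ell_I\times\R_-$.

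The remaining step, which is the main technical point, is to identify $p(\ell\times\R_-)$ with $\ell[1]$. Geometrically, $\ell\times\R_-$ is obtained from $\ell\times\R_+$ by a half-rotation of the $\C^*$ factor around the origin; a full such rotation realizes the degree-$2$ natural transformation $\Theta:\mathrm{id}\to[2]$ built into the construction of $\bar{p}$ (cf.\ Remark \ref{rmk:nattrans}), so a half-rotation must shift by $1$. Concretely, the generators of $CW^*(\ell'\times\R_+,\ell\times\R_-)$ are products of generators of $CW^*(\ell',\ell)$ with Reeb chords from $\R_+$ to $\R_-$ in $\C^*$ wrapping $(k+\tfrac12)$ turns for $k\ge 0$; under the trivialization of $\det T(H\times\C^*)$ that extends across $H\times\C$ (the one in which a full wrap contributes $+2$ to the degree), these generators lie in odd total degrees. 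Identifying the $2$-periodic graded pieces, $\bar{p}(\ell\times\R_-)$ becomes the Yoneda module of $\ell[1]$, so $p(\ell\times\R_-)\cong\ell[1]$ and $\rho(L_{I\cup\{0\}})\cong\ell_I[1]$. The subtle part is verifying carefully that the Maslov-type indices computed with respect to the trivialization used to set up $p$ in \S \ref{s:hmscompl} indeed place the half-wrap generators in odd degrees, so that the resulting grading shift is exactly $+1$.
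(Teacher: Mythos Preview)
Your argument is correct and follows the same approach as the paper, which simply notes that the portion of the boundary of $L_I$ (and of $L_{I\cup\{0\}}$) lying on $\{x_0=0\}$ is exactly $\ell_I$ and then invokes Remark~\ref{rmk:framed}. You supply more detail than the paper does---in particular, you distinguish the two framings $\ell_I\times\R_+$ versus $\ell_I\times\R_-$ by tracking the sign of $x_0$ in the affine chart and you justify the grading shift $p(\ell\times\R_-)\cong\ell[1]$, a point the paper leaves implicit.
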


\noindent
This is consistent with Conjecture \ref{conj:hmscompl},
given our expectation that, for $I\subseteq \{1,\dots,n\}$, $L_I$ 
corresponds to $\O_{Z_I}\in D^b\Coh(Z)$. Indeed, recall that 
$D^b_\sg(Z)\simeq D^b\Coh(D)$, where
$D=Z_{\{1,\dots,n\}}\cap Z_{n+1}=\{(z_1,\dots,z_n,0)\in \C^n\times 0\,|\,
z_1\dots z_n=0\}$. For $I\subseteq \{1,\dots,n\}$, we set $D_I=Z_I\cap
Z_{n+1}\subseteq D$. Orlov's construction 
\cite{OrlovKnorrer,OrlovGraded} identifies $[\O_{Z_I}]\in D^b_\sg(Z)$ with
$\O_{D_I}\in D^b\Coh(D)$. Thus, denoting by $q:D^b\Coh(Z)\to D^b_\sg(Z)$ the
quotient functor, and by $\epsilon:D^b_\sg(Z)\to D^b\Coh(D)$ Orlov's
equivalence, we find that $\varepsilon\circ q(\O_{Z_I})\cong \O_{D_I}$.
Given that our mirror symmetry ansatz for pairs of pants matches $\O_{D_I}$
with $\ell_I$ and $\O_{Z_I}$ with $L_I$, this is consistent with $\rho(L_I)\cong
\ell_I$. 

Moreover, let $I'=\{1,\dots,n+1\}\setminus I$, so that $I\cup \{0\}\sim
I'$. We expect that $L_{I\cup\{0\}}=L_{I'}$ corresponds to $\O_{Z_{I'}}\in
D^b\Coh(Z)$. Observing that $Z=Z_I\cup Z_{I'}$, we have a short exact sequence 
$0\to \O_{Z_{I}}\to \O \to \O_{Z_{I'}}\to 0$, which implies that
$\O_{Z_{I'}}$ is isomorphic to $\O_{Z_I}[1]$ in the quotient category
$D^b_\sg(Z)$, hence $\varepsilon\circ q(\O_{Z_{I'}})\cong \O_{D_I}[1]$.
This is again consistent with $\rho(L_{I'})\cong \ell_I[1]$.
Finally, $L_{\{0\}}=L_{\{1,\dots,n+1\}}$ corresponds to $\O_Z\in
D^b\Coh(Z)$, which is annihilated by $q$, in agreement with
$\rho(L_{\{0\}})=0$.

Next we consider the lifting functor $j:\W(\Pi_{n-1})\to \W(\Pi_n)$ of
Section \ref{s:FSj}. Since $x_0$ and $x_{n+1}$ have the same sign on $L_I$ for
$I\subseteq \{1,\dots,n\}$, under $f$ the Lagrangian $L_I$ projects to the real
negative axis, and coincides with the parallel transport of $\ell_I\subset
\Pi_{n-1}=f^{-1}(0)$ in the fibers of $f$ over $\gamma=(-\infty,0)$. Thus:
\begin{equation}
j(\ell_I)\cong L_I\qquad \text{for } I\subseteq \{1,\dots,n\}.
\end{equation}
In particular, $\rho\circ j \cong \mathrm{id}$ as expected.

\subsection{The Fukaya-Seidel category}
We now consider the category $\F^\circ((\C^*)^n,f)$. While
$f=x_1+\dots+x_n+1$ does not have any critical points, the special
fiber $f^{-1}(1)$ is the complement of $n+1$ hyperplanes through the
origin in $\C^{n-1}$, which is diffeomorphic to $\C^*\times \Pi_{n-2}$,
whereas the regular fibers are complements of $n+1$ affine hyperplanes in generic 
position in $\C^{n-1}$, i.e.\ isomorphic to $\Pi_{n-1}$. This degeneration
gives rise to a (non-compact) ``vanishing cycle'', and the Lagrangian obtained
by parallel transport of this vanishing cycle over the interval 
$(1,+\infty)$ is simply the real positive locus $L_{\{0\}}=(\R_+)^n\subset (\C^*)^n$.

Taking the reference fiber to be $f^{-1}(2)$, the category 
$\F^\circ((\C^*)^n,f)$ is generated by the admissible Lagrangian $L_{adm}$ obtained 
by truncating $L_{\{0\}}$ (namely, the portion of $\R_+^n$ where $x_1+\dots+x_n\le
1$), which is a section of the logarithm map over the appropriate region in
$\R^n$ (essentially the negative orthant). A calculation then shows that
the endomorphisms of $L_{adm}$ form a polynomial algebra
$\C[z_1,\dots,z_n]$, where the generator $z_i$ corresponds to a Reeb chord 
that wraps once around the hyperplane $x_i=0$. This computation is
consistent with mirror symmetry, as it matches $\mathrm{End}(\O)$
in the derived category of $Z_{n+1}=\C^n$.

By construction, $\alpha_\infty(L_{adm})=L_{\{0\}}$, in agreement with the
behavior of pullback on the mirror. Namely, the pullback of $\O_{Z_{n+1}}$ under the
map $\pi:Z\to Z_{n+1}$ defined by $\pi(z_1,\dots,z_{n+1})=(z_1,\dots,z_n,0)$
is indeed $\O_Z$.

Meanwhile, $\alpha_0(L_{adm})$ is (up to isotopy) the Lagrangian obtained by
parallel transport of the ``vanishing cycle'' of $f$ over the
interval $(0,1)$, i.e., the portion of $(\R_-)^n$ where
$x_1+\dots+x_n+1>0$. Therefore, $\alpha_0(L_{adm})=L_{\{n+1\}}$.
This is in agreement with Conjecture \ref{conj:FS}, as the image of the
structure sheaf under pushforward by the inclusion $i:Z_{n+1}\hookrightarrow
Z$ is indeed $\O_{Z_{n+1}}$.

Finally, since $\rho$ maps $L_{\{n+1\}}=L_{\{0,1,\dots,n\}}$ to
$\ell_{\{1,\dots,n\}}[1]$ (cf.\ Lemma \ref{l:restrpants}), we have
$j\rho\alpha_0(L_{adm})\simeq L_{\{1,\dots,n\}}[1]$, and
the exact triangle \eqref{eq:jtriangle} takes the form
$$L_{\{1,\dots,n\}}\to L_{\{0\}}\to L_{\{n+1\}}\to L_{\{1,\dots,n\}}[1].$$ 
Under mirror symmetry, this corresponds to the exact triangle in $D^b\Coh(Z)$
induced by the short exact sequence of sheaves
$$0\to \O_{Z_1\cup \dots\cup Z_n}\to \O_Z \to \O_{Z_{n+1}}\to 0.$$

\section{Computing $\W(\Pi_n)$} \label{s:pantscomp}

We now sketch an approach to the calculation of $\W(\Pi_n)$, relying on a
mix of explicit computations and the structural considerations introduced in
the previous section. Conjecture \ref{conj:pants} should then follow as a
corollary.

\subsection{Liouville structure and wrapping Hamiltonian}

Embedding $\Pi_n$ into $\PP^{n+1}$ as the
hyperplane $\Sigma=\{x_1+\dots+x_{n+1}+x_0=0\}$ minus the $n+2$ coordinate
hyperplanes, it is clear that the labels $0,\dots,n+1$ should play
symmetric roles, with the important exception of gradings. 
(Since our preferred trivialization of the tangent bundle is inherited 
from $(\C^*)^n$ and extends across the hyperplane $x_0=0$ but not across the
others, Reeb orbits that wrap around the hyperplane $x_0=0$ are graded
differently from those that wrap around the other coordinate hyperplanes.)
Apart from this, one would like all calculations to be invariant under
the action of $\mathfrak{S}_{n+2}$ by permutation of the coordinates.
Thus, it is desirable to choose the Liouville structure on $\Pi_n$ and
the wrapping Hamiltonian to be $\mathfrak{S}_{n+2}$-invariant. 
(We note that the Lagrangians $L_I$ are conical at infinity, and in fact
invariant under the Liouville flow, for {\em any} choice of complex 
conjugation anti-invariant Liouville structure on $\Pi_n$.)

With this in mind, we stratify 
the hyperplane $\Sigma$ depending on which coordinates vanish, namely for
$I$ a proper subset of $\{0,\dots,n+1\}$ we set $\Sigma_I$ to be the
codimension $|I|$ subset of $\Sigma$
where $x_i=0$ exactly for $i\in I$; the pair of pants $\Pi_n$ is then
the open stratum $\Sigma_\emptyset$.
We claim that the Liouville structure on $\Pi_n$ can be chosen
in such a way that, in a neighborhood of $\Sigma_I$, there are $|I|$ 
commuting $S^1$-actions, generated by Hamiltonians $h_{I,i}$ ($i\in
I$), each of which essentially acts by
rotating one of the coordinates $x_i$, $i\in I$ around the origin. Moreover,
observing that for
$I\subset J$ we have $\Sigma_J\subset \overline{\Sigma}_I$,
we require that $h_{I,i}$ and $h_{J,i}$ agree near $\Sigma_J$ for all $i\in
I\subset J$.  We then take our (quadratic) wrapping Hamiltonian to be
$H_I=\frac12 \sum_{i\in I} h_{I,i}^2$ near $\Sigma_I$.

One possible approach to the construction is as follows.
Near $\Sigma_I$, we define local affine coordinates $x_{I,i}$
by \begin{equation}
x_{I,i}=\Bigl(\pm \prod_{j\not\in I} x_j\Bigr)^{-1/(n+2-|I|)} x_i,
\end{equation}
for some local choice of the $(n+2-|I|)$-th root. (Since we will only use
$|x_{I,i}|$ and $d\log x_{I,i}$, the choice of root is 
not important.) We note that $\Sigma_I$ still corresponds to the locus where
$x_{I,i}=0$ for all $i\in I$, and (for suitable choices of signs and roots)
these coordinates are still real-valued on the components of the real locus of
$\Pi_n$.
These coordinates patch as follows: if
$I\subset J$, then $\Sigma_J\subset \overline{\Sigma}_I$, and (up to a 
root of unity)
$$x_{I,i} = \Bigl(\prod_{j\in J\setminus I} x_{J,j}\Bigr)^{-1/(n+2-|I|)} x_{J,i}$$
and conversely
\begin{equation} \label{eq:xJi}
x_{J,i} = \Bigl(\prod_{j\in J\setminus I} x_{I,j}\Bigr)^{1/(n+2-|J|)} x_{I,i}.
\end{equation}

Writing $x_{I,i}=\exp(r_{I,i} + i\theta_{I,i})$, we
set up the Liouville structure so that, near $\Sigma_I$ and away from all
the lower-dimensional strata, the dominant term in the K\"ahler potential
and in the Hamiltonian is
\begin{equation}\label{eq:PhiI}
H_I=\Phi_I=\frac12 \sum_{i\in I} (r_{I,i}+K_{I})^2
\end{equation}
where $K_{I}=K_{|I|}>0$ is some fixed constant depending only on $|I|$.
As one approaches a lower-dimensional stratum $\Sigma_J$ ($I\subset J$),
this is patched together with the expression $H_J$ by making $K_{I}$ a
function of the variables $r_{I,j}$ (or equivalently $r_{J,j}$) 
for $j\in J\setminus I$, and also by introducing quadratic terms in the
variables $r_{J,j}$. 

In light of \eqref{eq:xJi}, near $\Sigma_J\subset
\overline{\Sigma}_I$ (and away from other smaller strata) we want
to set $K_{I}=\frac{1}{(n+2-|J|)}\sum_{j\in J\setminus I} r_{I,j}+K_{|J|}$
when the quantities $r_{I,j}$, $j\in J\setminus I$ are sufficiently
negative.
In other terms, over a neighborhood of $\Sigma_I$ our Hamiltonian $H$ is expressed in terms of the $x_{I,i}$
as some smooth approximation of
\begin{equation}\label{eq:Hpants}
H={\frac12} \sum_{i=0}^{n+1} \biggl[\min\Bigl(r_{I,i}+\min_{I\subseteq J,\ |J|\le n}
\Bigl\{{\frac{1}{(n+2-|J|)}} \sum_{j\in J\setminus I}
r_{I,j}+K_{|J|}\Bigr\},0\Bigr) \biggr]^2.
\end{equation}
where the smoothing of the minimum still only depends on the values of $r_{I,j}$ for $j\not\in I$.
Choosing the positive constants $K_{|I|}$ sufficiently large, the term
involving $r_{I,i}$ is supported in a small neighborhood of the hyperplane
$x_i=0$. Near the zero-dimensional strata the same formula can be used for the
K\"ahler potential, but along higher dimensional strata we need to add a term involving
only the coordinates $x_{I,j}$ for $j\not\in I$ (essentially, a K\"ahler form on the
$(n-|I|)$-dimensional pair of pants $\Sigma_I$).

For $|I|=n$, i.e.\ near a zero-dimensional stratum, the K\"ahler potential
is given by \eqref{eq:PhiI}, and the 
Hamiltonian $h_{I,i}=r_{I,i}+K_{I}$ generates the vector field $\partial/\partial
\theta_{I,i}$ which rotates $x_{I,i}$ while leaving all other $x_{I,j}$,
$j\in I$ unchanged. (Meanwhile, the two remaining homogeneous coordinates, which
are the largest, vary slightly as needed to preserve the condition
$\sum x_j=0$.)
Even when the ``constants'' $K_{I}$ in \eqref{eq:PhiI} are allowed to
vary and depend on $\{r_{I,j},\ j\in J\setminus I\}$ for some
$J\supset I$ with $|J|=n$, to arrive at an expression of the form
\eqref{eq:Hpants}, and the K\"ahler potential includes an extra term
depending only on $x_{I,j}$ for $j\in J\setminus I$, it remains true that
for $i\in I$ the Hamiltonian $h_{I,i}=r_{I,i}+K_{I}$ generates the vector
field $\partial/\partial \theta_{I,i}$ which rotates $x_{I,i}$ while leaving
$x_{I,j}$ unchanged for $j\in J\setminus \{i\}$.  

Further away from all the zero-dimensional strata there is no longer
a preferred $n$-element subset of the $r_{I,j}$'s on which we can assume the
K\"ahler potential (or even the term $K_I$) solely depends. Nonetheless,
we can arrange for that, near $\Sigma_I$, for $i\in I$ the Hamiltonian $h_{I,i}=r_{I,i}+K_I$
still generates an $S^1$-action which rotates $x_{I,i}$ while preserving
the other coordinates $x_{I,j}$, $j\in I\setminus \{i\}$; a priori none of
the coordinates $x_{I,j}$, $j\not\in I$ are preserved, though we can arrange
for them to vary only by small amounts. For simplicity we still
denote these vector fields by $\partial/\partial \theta_{I,i}$.

Putting everything together, we find that 
near $\Sigma_I$ and away from lower-dimensional strata the vector
field generated by the quadratic Hamiltonian $H$ takes the form
$\sum_{i\in I} h_{I,i}\,\partial/\partial \theta_{I,i}=\sum_{i\in I}
(r_{I,i}+K_I)\,\partial/\partial \theta_{I,i}$. (Note that
$h_{I,i}$ tends to $-\infty$ as $x_i$ approaches zero, i.e.\ we wrap clockwise
around the coordinate hyperplanes).

\subsection{Wrapped Floer cohomology}\label{ss:HWLI}

Given any subset $I\subset \{0,\dots,n+1\}$, denote by $\o{I}=\{0,\dots,n+1\}\setminus I$
the complementary subset. We will consider various quotients of the polynomial ring
$\C[z_0,\dots,z_{n+1}]$, graded with $\deg(z_0)=2$ and $\deg(z_i)=0$ for $i\ge 1$.
For convenience, we define
$z_I=\prod_{i\in I} z_i$. (By convention, $z_\emptyset=1$.)

\begin{prop}\label{prop:HWLI}
Given a non-empty proper subset $I\subset \{0,\dots,n+1\}$,
as a graded ring we have 
\begin{equation}\label{eq:HWLI}
HW^*(L_I,L_I)\simeq \C[z_0,\dots,z_{n+1}]/(z_I, z_{\o{I}}).
\end{equation}
\end{prop}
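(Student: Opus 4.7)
The plan is to identify the generators of $CW^*(L_I,L_I)$ stratum by stratum, to check that they all sit in even degree (so that $\mu^1$ vanishes for grading reasons), and to compute the product by a local calculation near each stratum combined with a global winding-number constraint.

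First I will analyze $\phi^1_H(L_I)\cap L_I$ near each stratum $\Sigma_{I'}$. By the construction of \S\ref{s:pantscomp}, in such a neighborhood the Hamiltonian is (up to smoothing) $\tfrac12\sum_{i\in I'}(r_{I',i}+K_{I'})^2$, and the time-$1$ flow rotates each angular coordinate $\theta_{I',i}$ by $h_{I',i}=r_{I',i}+K_{I'}$, which tends to $-\infty$ as $|x_i|\to 0$. Since $L_I$ is fixed by complex conjugation, an intersection with $\phi^1_H(L_I)$ arises whenever each $h_{I',i}$ is a multiple of $2\pi$; this produces a lattice of generators labeled by wrapping multiplicities $(a_i)_{i\in I'}\in\Z_{\geq 1}^{I'}$, supplemented by a single unit generator in the interior (extracted by a Morse-theoretic perturbation, using that each $L_I$ is a topological ball). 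A direct sign analysis on the real locus shows that $\overline{L_I}\supseteq\Sigma_{I'}$ precisely when $I'\not\supseteq I$ and $I'\not\supseteq\o I$, so no generators arise near the other strata. Assembling these contributions, the generators of $CW^*(L_I,L_I)$ are in natural bijection with monomials $z^{\vec a}=\prod_i z_i^{a_i}$ whose support avoids both $I$ and $\o I$, which is exactly a basis for $\C[z_0,\dots,z_{n+1}]/(z_I,z_{\o I})$.

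Next I will check gradings. Under the trivialization of $T(\C^*)^n$ restricted to $\Pi_n=(\C^*)^n\setminus H$, wrapping once around $\{x_i=0\}$ for $i\neq 0$ contributes $0$ to the Maslov index while wrapping once around $H=\{x_0=0\}$ contributes $2$; hence the generator $z^{\vec a}$ has degree $2a_0$, matching the polynomial grading. In particular all generators lie in even degree, so $\mu^1=0$ and $HW^*(L_I,L_I)\cong CW^*(L_I,L_I)$ as graded vector spaces. For the product, the rescaling trick of \S\ref{ss:wrap_quadratic} expresses $\mu^2(z^{\vec b},z^{\vec a})$ as a signed count of pseudo-holomorphic triangles with boundary on $\phi^2_H(L_I),\phi^1_H(L_I),L_I$. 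Winding around each hyperplane $\{x_i=0\}$ is conserved along any such triangle, so every contribution lands in the coefficient of $z^{\vec a+\vec b}$; when $\mathrm{supp}(\vec a+\vec b)$ contains $I$ or $\o I$ there is no output generator and the product vanishes. In the remaining case, $H$ and $L_I$ decompose near $\Sigma_{\mathrm{supp}(\vec a+\vec b)}$ (up to bounded factors) as a product of $|\mathrm{supp}|$ independent cylindrical pieces, and a direct adaptation of the $(\C^*)^n$ computation in \S\ref{ss:cyl_calc} exhibits a unique holomorphic triangle contributing $+1$, yielding $\mu^2(z^{\vec b},z^{\vec a})=z^{\vec a+\vec b}$.

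The main obstacle will be promoting the local triangle counts to a rigorous global statement: one must rule out spurious contributions from triangles whose projections wander between neighborhoods of different strata or attempt to cross one of the excised coordinate hypersurfaces. The expected argument uses a maximum principle adapted to the approximate toric symmetries preserving $H$ near each stratum, combined with the integrality of winding numbers, which geometrically encodes the relations $z_I=z_{\o I}=0$: a triangle whose output would violate these relations must simultaneously approach all of the hyperplanes indexed by $I$ (respectively $\o I$), which is incompatible with the sign pattern carried by the boundary on $L_I$.
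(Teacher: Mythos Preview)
Your approach is the same as the paper's: enumerate generators stratum by stratum, use even degree to kill $\mu^1$, and compute $\mu^2$ via a homology constraint plus a local product model. Two points deserve sharpening.

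First, the winding-number constraint is weaker than you claim. In $\Pi_n\subset\Sigma\subset\PP^{n+1}$ the meridians around the $n+2$ coordinate hyperplanes satisfy one relation, so $H_1(\Pi_n,L_I;\Z)\simeq H_1(\Pi_n;\Z)\simeq\Z^{n+2}/(1,\dots,1)$ and the relative homology class of the output only pins down the exponent vector modulo the diagonal. The paper lifts this ambiguity by combining homology with the grading: since $\deg z_0=2$ and $\deg z_i=0$ for $i\ge 1$, the degree determines $a_0$ and hence the full tuple in $\Z^{n+2}$. Second, the ``obstacle'' you identify has a clean resolution you do not quite reach. Working near $\Sigma_{\mathrm{supp}(\vec a+\vec b)}$ is awkward when $|\mathrm{supp}|<n$, because that stratum is positive-dimensional and the triangle could wander along it. The paper instead observes that since the support contains neither $I$ nor $\o I$ it has at most $n$ elements, so one can enlarge it to an $n$-element set $J$ still avoiding $I$ and $\o I$; near the $0$-dimensional stratum $\Sigma_J$ the Hamiltonian and $L_I$ are an honest product in the coordinates $(x_{J,j})_{j\in J}$, and the ordinary maximum principle confines the triangle to this chart, reducing the count to the $(\C^*)^n$ computation of \S\ref{ss:cyl_calc} with no residual directions. (Also a minor slip: $\overline{L_I}$ \emph{meets} $\Sigma_{I'}$ in a real $(n-|I'|)$-cell, it does not contain it.)
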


\proof
Recall that $L_I$ is the component of the real locus of $\Pi_n$ where
the coordinates $x_i$ are positive for $i\in I$ and negative for $i\in
\o{I}$. The closure of $L_I$ in $\Sigma$ intersects the stratum $\Sigma_J$
if and only if neither $I$ nor $\o{I}$ is a subset of $J$.

For such $J$, near $\o{L}_I\cap \Sigma_J$ the local coordinates 
$x_{J,j}$, $j\in J$ define a local projection to $\C^{|J|}$ under which $L_I$
maps to an orthant in the real locus, whereas the wrapping Hamiltonian flow
rotates each $x_{J,j}$ clockwise by increasing amounts as $|x_{J,j}|\to 0$.
Thus, for each tuple of positive integers $(k_j)_{j\in J}$, along
$\o{L}_I\cap \Sigma_J$ (and away from lower-dimensional strata) we have a family 
of time 1 trajectories of $X_H$ from $L_I$ to itself that wraps $k_j$ times
around the hyperplane $x_j=0$. 

To make things non-degenerate, we pick a
``convex'' bounded Morse function on $\o{L}_I$ which reaches its maximum 
at the corners and whose restriction to each stratum $\o{L}_I\cap \Sigma_J$ 
has a single critical point which is a minimum. (Such a function is easy to
construct using the contractibility of $\o{L}_I$ and all of its strata.)
After perturbing the Hamiltonian by a small positive multiple of this function,
there is a single non-degenerate time 1 chord of $X_H$ from $L_I$ to itself which wraps $k_j$ 
times around each hyperplane $x_j=0$ near $\o{L}_I\cap \Sigma_J$.
We label the corresponding generator of $CW^*(L_I,L_I)$
by the monomial $\prod_{j\in J} z_j^{k_j}$, and note that its degree (using
our chosen trivialization of the tangent bundle) is equal to $2k_0$ if $0\in
J$, and zero otherwise.

Letting $J$ vary over all subsets of $\{0,\dots,n+1\}$ which contain neither
$I$ nor $\o{I}$ (including the empty subset, which gives rise to a single
generator $z_\emptyset=1$ at the minimum of the chosen Morse function),
the generators of $CW^*(L_I,L_I)$ are labelled by all the
monomials in $\C[z_0,\dots,z_{n+1}]$ which are divisible neither by $z_I$
nor by $z_{\o{I}}$. Moreover, since all the generators have even degree, the
Floer differential necessarily vanishes; thus \eqref{eq:HWLI} holds as an
isomorphism of graded vector spaces.

Next we observe that $H_1(\Pi_n,L_I;\Z)\simeq H_1(\Pi_n,\Z)\simeq
\Z^{n+2}/(1,\dots,1)$, where the generators of $\Z^{n+2}$
correspond to meridian loops around the coordinate hyperplanes.
Under this isomorphism, the generator $\prod z_j^{k_j}$ of $CW^*(L_I,L_I)$ 
represents the homology class $(k_0,\dots,k_{n+1})$ mod $(1,\dots,1)$; these
homology classes are all distinct.
Whenever there is a perturbed holomorphic curve contributing to the Floer
product, the relative homology class of the output chord must be equal to
the sum of those of the input chords. Moreover, the degree of the output
generator must be the sum of those of the input generators. Since
$\deg(z_0)=2$, we can use the grading to lift the translation ambiguity:
there is an isomorphism $(2\Z) \times H_1(\Pi_n,\Z)\simeq \Z^{n+2}$ under
which the degree and homology class of the generator $\prod z_j^{k_j}$
map to the tuple $(k_0,\dots,k_{n+1})\in \Z^{n+2}$.

Any generator which appears
in the expression of the product of the generators
$\prod z_j^{k_j}$ and $\prod z_j^{\ell_j}$ of $HW^*(L_I,L_I)$ must have
degree $2(k_0+\ell_0)$ and represent
the homology class $(k_0+\ell_0,\dots,k_{n+1}+\ell_{n+1})$ mod
$(1,\dots,1)$. This implies that the product must be
zero if $k_j+\ell_j>0$ for all $j\in I$ or for all $j\in \o{I}$ (for lack of
a suitable generator of $HW^*(L_I,L_I)$), and
otherwise it must be a multiple of the generator $\prod z_j^{k_j+\ell_j}$.

In the latter case, to determine the number of solutions to the perturbed
holomorphic curve equation, we observe that since the set of $j$ such that
$k_j+\ell_j>0$ contains neither all of $I$ nor all of $\o{I}$, it must have
at most $n$ elements, and there exists $J$ with $|J|=n$, containing neither
$I$ nor $\o{I}$, such that
$k_j=\ell_j=0$ whenever $j\not\in J$. We claim that we can determine the
product $\bigl(\prod z_j^{k_j}\bigr)\cdot\bigl(\prod z_j^{\ell_j}\bigr)$
by working in a local model near the 0-dimensional stratum $\Sigma_J$,
using the affine coordinates $x_{J,j}$, $j\in J$.

In these coordinates, the wrapping Hamiltonian near $\Sigma_J$ is
modelled on a standard product Hamiltonian on a neighborhood of the origin
in $(\C^*)^n$,  given by $H=\frac12 \sum
h_{J,j}^2$, where $h_{J,j}=\min(r_{J,j}+K,0)$ up to some smoothing near
$r_{J,j}+K=0$ (recall that $r_{J,j}=\log |x_{J,j}|$),
and the Lagrangian $L_I$ is one of the orthants in the real locus.
We perturb the degenerate
minimum of $h_{J,j}^2$ to achieve non-degeneracy, in a manner such that
the minimum of the perturbed Hamiltonian lies within the local coordinate chart.  
Concretely, in the local chart we can take the perturbed Hamiltonian to be
$$H_\epsilon=\frac12 \sum_{j\in J} \Bigl(\min(r_{J,j}+K,0)^2 + \epsilon r_{J,j}\Bigr)$$
where $\epsilon>0$ is small.
(This choice of perturbation clearly depends on the choice of $J$, and is
different from the perturbation used above to compute the
overall chain complex; this is not an issue,
since we are only interested in a cohomology level computation 
of the product structure, and the isomorphism induced
by continuation between the two choices of perturbations is the obvious one.)

With this understood, $L_I$ and its image under the flow generated by
$H_\epsilon$ are product Lagrangians in the local coordinates
$(x_{J,j})_{j\in J}$,
and in each factor the picture looks exactly like the left half of 
Figure \ref{fig:wrapcyl} (up to just past the midpoint).
The chord from $L_I$ to itself which is labelled
by the monomial $\prod z_j^{k_j}$
wraps $k_j$ times around each coordinate hyperplane, and corresponds
to the intersection point labelled $x_{-k_j}$ in the left half of 
Figure \ref{fig:wrapcyl} for each of the $n$ coordinate factors. Similarly
for $\prod z_j^{\ell_j}$.

The maximum principle implies that any perturbed holomorphic disc which 
contributes to the product of these two generators must remain entirely
within the local chart. Moreover, the projection
to each coordinate factor $x_{J,j}$ is a perturbed holomorphic disc in
(a neighborhood of the origin in) $\C^*$ with boundary on the appropriate 
arcs. Conversely, every tuple of index 0 perturbed holomorphic discs in
the coordinate factors lifts to an index 0 perturbed holomorphic disc in 
the total space of the local chart. 
Recall from \S \ref{ss:cyl_calc} that, on the cylinder, 
the generators $x_{-k_j}$ and $x_{-\ell_j}$ are the inputs of a unique
triangle contributing to the Floer product, whose output is 
$x_{-k_j-\ell_j}$. Thus, we conclude that
$$\bigl(\prod z_j^{k_j}\bigr)\cdot\bigl(\prod z_j^{\ell_j}\bigr)=
\prod z_j^{k_j+\ell_j}.$$
Hence the ring structure on $HW^*(L_I,L_I)$ is as expected.
\endproof

This calculation of $HW^*(L_I,L_I)$ agrees with the expectation from
mirror symmetry. Indeed, switching $I$ and $\o{I}$ if needed, we can assume
that $0\not\in I$, and $L_I$ is expected to correspond to $\O_{Z_I}\in
D^b\Coh(Z)$, or equivalently, since $Z=\mathrm{Spec}\,R$ for 
$R=\C[z_1,\dots,z_{n+1}]/(z_1\dots z_{n+1})$, the $R$-module $R/(z_I)$. Set
$I'=\{1,\dots,n+1\}\setminus I$. Using the resolution
\begin{equation}\label{eq:resolveRzI}
\dots \longrightarrow R \stackrel{z_I}{\longrightarrow} R \stackrel{z_{I'}}{\longrightarrow}
R \stackrel{z_I}{\longrightarrow} R \longrightarrow R/(z_I) \to 0,
\end{equation}
we find that 
$$\Ext^{2k}(R/(z_I),R/(z_I))\simeq \begin{cases} 
R/(z_I) & \text{for}\ k=0,\\ (R/(z_I,z_{I'}))z_0^k &\text{for}\ k>0
\end{cases}$$ where $z_0$ is a generator of $\Ext^2(R/(z_I),R/(z_I))$ as
a module over $\mathrm{End}(R/(z_I))$.

Next, we consider pairs of objects, and show:

\begin{prop}\label{prop:HWLIJ}
Given non-empty proper subsets $I,J\subset \{0,\dots,n+1\}$, we have
\begin{align}\label{eq:HWLIJ}
HW^*(L_I,L_J)\simeq \quad & \C[z_0,\dots,z_{n+1}]/(z_{I\cap J}, z_{\o{I}\cap
\o{J}})\cdot u_{\o{Q}} \\ \oplus\ & \C[z_0,\dots,z_{n+1}]/(z_{I\cap \o{J}},
z_{\o{I}\cap J})\cdot u_Q \nonumber
\end{align}
as a graded $(HW^*(L_I,L_I),HW^*(L_J,L_J))$-bimodule,
where we set\/ $Q=(I\cap J)\cup (\o{I}\cap \o{J})$
and $\o{Q}=(I\cap \o{J})\cup (\o{I}\cap J)$, and 
the generator $u_Q$ $($resp.\ $u_{\o{Q}})$ has
degree $1$ if\/ $0\in Q$ $($resp.\ $0\in \o{Q})$, and~$0$ otherwise.
\end{prop}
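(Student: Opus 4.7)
The plan is to extend the chord-counting argument of Proposition \ref{prop:HWLI} to a pair of distinct Lagrangians. Time-$1$ chords of $X_H$ from $L_I$ to $L_J$ cluster near boundary strata $\Sigma_K$ lying in both $\overline{L}_I$ and $\overline{L}_J$, where the sign patterns of $I$ and $J$ on $\overline{K}$ are compatible with the wrapping flow. Since $L_J=L_{\overline{J}}$, compatibility splits into two cases: either the signs of $I$ and $J$ agree on $\overline{K}$, which forces $\overline{Q}=I\triangle J\subseteq K$, or the signs of $I$ and $\overline{J}$ agree on $\overline{K}$, which forces $Q=I\triangle \overline{J}\subseteq K$. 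These two cases produce the two summands $u_{\overline{Q}}$ and $u_Q$, each representing the minimal chord that wraps a half-turn around each hyperplane $x_j=0$ for $j\in \overline{Q}$ or $j\in Q$ respectively.

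Focus on the $u_{\overline{Q}}$ family. For $K\supseteq \overline{Q}$, the four reachability conditions $I,\overline{I},J,\overline{J}\not\subseteq K$ collapse, via the identities $I\setminus \overline{Q}=J\setminus \overline{Q}=I\cap J$ and $\overline{I}\setminus \overline{Q}=\overline{J}\setminus \overline{Q}=\overline{I}\cap \overline{J}$, to the two conditions $K\not\supseteq I\cap J$ and $K\not\supseteq \overline{I}\cap \overline{J}$. Applying the convex Morse-perturbation scheme of Proposition \ref{prop:HWLI} on each face $\overline{L}_I\cap \overline{L}_J\cap \Sigma_K$, every admissible $K$ contributes one non-degenerate chord for each tuple of winding numbers $(k_j)_{j\in K}$, with $k_j\ge 0$ on $\overline{Q}$ (extra full turns beyond the mandatory half-turn) and $k_j\ge 1$ on $K\cap Q$ (full turns). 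Labeling this chord by $\bigl(\prod_{j\in K}z_j^{k_j}\bigr)\,u_{\overline{Q}}$, the relations $z_{I\cap J}\cdot u_{\overline{Q}}=0$ and $z_{\overline{I}\cap \overline{J}}\cdot u_{\overline{Q}}=0$ follow because $\overline{Q}\cup(I\cap J)\supseteq I$ and $\overline{Q}\cup(\overline{I}\cap \overline{J})\supseteq \overline{I}$, so the hypothetical chords would have to be supported in strata unreachable by $\overline{L}_I$. This identifies the $u_{\overline{Q}}$ generators with the monomial basis of $\C[z_0,\ldots,z_{n+1}]/(z_{I\cap J},z_{\overline{I}\cap \overline{J}})\cdot u_{\overline{Q}}$; the $u_Q$ summand follows by the symmetric argument with $J$ replaced by $\overline{J}$.

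Degrees are determined by the fact that the preferred trivialization of $T\Pi_n$ extends across $x_0=0=H$ but not across the other coordinate hyperplanes: a full wrap around $H$ contributes $2$ to the Maslov index, so the mandatory half-turn contributes $1$, giving $\deg u_{\overline{Q}}=1$ iff $0\in \overline{Q}$, and similarly for $u_Q$. The Floer differential vanishes by the same injectivity argument as in Proposition \ref{prop:HWLI}: the pair (Maslov degree, relative homology class) separates all generators, and the two summands are further distinguished by the half-turn contributions to $H_1(\Pi_n,L_I;\Z)$. For the bimodule structure, one works locally near a $0$-dimensional stratum $\Sigma_K\subset \overline{L}_I\cap \overline{L}_J$: in the affine coordinates $x_{K,k}$, the Lagrangians $L_I$, $L_J$ and their Hamiltonian images are products of half-axes, and every rigid perturbed holomorphic triangle factors coordinate-wise into triangles of the cylinder model of \S\ref{ss:cyl_calc}. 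Summing over these index-$0$ triangles then recovers the multiplication $\bigl(\prod z_j^{k_j}\bigr)\cdot \bigl(\prod z_j^{\ell_j}\bigr)=\prod z_j^{k_j+\ell_j}$ within each family, confirming both the left $HW^*(L_I,L_I)$- and right $HW^*(L_J,L_J)$-module structures.

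The principal obstacle is combinatorial bookkeeping: one must verify carefully that the two families $u_Q$ and $u_{\overline{Q}}$ are disjoint and together exhaust the chord set, and that the local product formulas derived at different $0$-dimensional strata $\Sigma_K$ glue consistently. A related subtlety is tracking the half-turn contributions to both the grading and the relative homology class, so that the degree of $u_Q$ or $u_{\overline{Q}}$ and the annihilation relations are reconciled across all strata. Once this is in place, the same degree parity argument as in Proposition \ref{prop:HWLI} shows that all higher operations $\mu^k$ ($k\neq 2$) vanish, upgrading the identification to one of $A_\infty$-bimodules.
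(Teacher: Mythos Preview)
Your approach is essentially the paper's: identify the two families of chords indexed by strata $\Sigma_K$ with $K\supseteq\overline{Q}$ or $K\supseteq Q$ and the stated avoidance conditions, perturb to non-degeneracy, and label by half-integer monomials. The paper defers the bimodule computation to the separate Proposition~\ref{prop:HWLIJprod}, but your local-model sketch near a zero-dimensional stratum is exactly that argument.

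Two points deserve attention. First, your differential-vanishing step is imprecise. Saying ``(degree, homology class) separates all generators'' is not enough: the differential preserves the homology class and shifts degree by~$1$, so you must rule out pairs of generators with the same class and degrees differing by~$1$. The paper does this by encoding each generator as a tuple in $(\frac12\Z)^{n+2}$ with non-negative entries and at least two entries equal to zero, then noting that no two such tuples can differ by $(\frac12,\dots,\frac12)$. Your sketch gestures in this direction but does not make the key observation explicit.

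Second, your final sentence is an overreach. Proposition~\ref{prop:HWLI} contains no ``degree parity argument'' killing all higher $\mu^k$; its proof only shows $\mu^1=0$ (because all generators there are even-degree) and then computes $\mu^2$. That parity argument neither extends to $\mu^k$ for $k\ge 3$ nor applies here, where generators can have odd degree. The proposition claims only a graded bimodule isomorphism, not $A_\infty$-formality of the bimodule, and the paper makes no such formality claim. Drop that sentence.
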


As an additional piece of notation, we formally set $$u_Q=z_Q^{1/2}=
\prod_{j\in Q} z_j^{1/2}$$ and similarly for $u_{\o{Q}}$. This allows us
to view generators of $HW^*(L_I,L_J)$ as monomials in $z_0,\dots,z_{n+1}$
with half-integer exponents (and is consistent with gradings).

\proof
The argument is similar to the case of Proposition \ref{prop:HWLI}. 
First, we find a criterion for the closures of $L_I$ and $L_J$ in 
$\Sigma$ to intersect along the stratum $\Sigma_K$ for some $K\subset
\{0,\dots,n+1\}$. In terms of the homogeneous coordinates $(x_1\!:\!\dots
\!:\!x_{n+1}\!:\!x_0)$, the points of $\o{L}_I\cap \Sigma_K$ are those
where $x_i$ is positive for $i\in I\cap \o{K}$, zero for $i\in K$, and
negative for $i\in \o{I}\cap \o{K}$, or vice-versa exchanging $I$ and
$\o{I}$. Moreover, since the sum of the coordinates is zero, there must be
at least one positive and one negative coordinate. Thus, $\o{L}_I\cap \o{L}_J\cap \Sigma_K$ is non-empty in precisely
two cases:
\begin{enumerate}
\item $I\cap \o{K}=J\cap \o{K}\neq \emptyset$ and $\o{I}\cap\o{K}=\o{J}\cap
\o{K}\neq \emptyset$, or
\item $I\cap \o{K}=\o{J}\cap \o{K}\neq \emptyset$ and $\o{I}\cap\o{K}=J\cap
\o{K}\neq \emptyset$.
\end{enumerate}
In case (1), $K$ must contain the symmetric difference of $I$ and $J$, i.e.\
$\o{Q}\subseteq K$; but none of $I,\o{I},J,\o{J}$ can be a subset of $K$.
Similarly for case (2), $K$ must contain the symmetric difference of $I$ and
$\o{J}$, i.e.\ $Q$. Thus, we can reformulate our criterion as:
\begin{enumerate}
\item $\o{Q}\subseteq K$, but $K$ contains neither $I\cap J$ nor $\o{I}\cap
\o{J}$, or
\item $Q\subseteq K$, but $K$ contains neither $I\cap \o{J}$ nor
$\o{I}\cap J$.
\end{enumerate}
With this understood, in case (1), near $\Sigma_K$ the coordinates
$x_{K,j}$, $j\in K$ define a local projection to $\C^{|K|}$ in which
$L_I$ and $L_J$ map to orthants in the real locus; these orthants
correspond to real points whose coordinates have the same signs
for $j\in Q\cap K$ and different signs for $j\in \o{Q}$.
Thus, given any tuple $(k_j)_{j\in K}$ with
$k_j\in \Z_{>0}$ for $j\in Q\cap K$ and $k_j\in \Z_{\ge 0}+\frac12$ for $j\in \o{Q}$,
near $\Sigma_K$ there is a family of time 1 trajectories of $X_H$ from $L_I$ to $L_J$ that
wrap $k_j$ times around the hyperplane $x_j=0$ for each $j\in K$.
After perturbing the Hamiltonian slightly as in the proof of Proposition
\ref{prop:HWLI}, there is a single non-degenerate such trajectory, and we
label the corresponding generator of $CW^*(L_I,L_J)$ by the monomial
$\prod_{j\in K} z_j^{k_j}=\prod_{j\in K} z_j^{\lfloor k_j\rfloor}\,u_{\o{Q}}$.

Similarly in case (2), near $\Sigma_K$ the Lagrangians $L_I$ and $L_J$ map
to orthants where the coordinates have the same signs for $j\in \o{Q}\cap K$
and different signs for $j\in Q$, and there are time 1 trajectories of $X_H$
from $L_I$ to $L_J$ that wrap $k_j$ times around the hyperplane $x_j=0$,
with $k_j\in \Z_{>0}$ for $j\in \o{Q}\cap K$ and $k_j\in
\Z_{\ge 0}+\frac12$ for $j\in Q$. The corresponding generator of
$CW^*(L_I,L_J)$ is denoted by $\prod_{j\in K} z_j^{k_j}=\prod_{j \in K} z_j^{\lfloor
k_j\rfloor}\,u_Q$.

In all cases, with our choice of trivialization of the tangent bundle the
degree of these generators is $2k_0$ if $0\in K$, and zero otherwise.
Letting $K$ vary over all subsets which satisfy (1) or (2), we obtain
that $CW^*(L_I,L_J)$ is isomorphic as a graded vector space to the
right-hand side of \eqref{eq:HWLIJ}.

Next, we observe that, since $L_I$ and $L_J$ are contractible,
by choosing base points $*_I\in L_I$, $*_J\in L_J$, and $*\in \Pi_n$, and
reference paths from $*$ to $*_I$ and from $*$ to $*_J$, we can use the
reference paths to complete any arc connecting $L_I$ to $L_J$ into a closed
loop in $\Pi_n$, uniquely up to homotopy. In other terms, the space of
homotopy classes of paths from $L_I$ to $L_J$ is a torsor over
the fundamental group $\pi_1(\Pi_n,*)$, and can be identified
(non-canonically) with it. Passing to homology, we can use this to assign
elements of $H_1(\Pi_n,\Z)\simeq \Z^{n+2}/(1,\dots,1)$ to the generators of $CW^*(L_I,L_J)$. 
A more canonical choice in our case shifts by $\frac12$ the entries corresponding to
elements of $Q$ or $\o{Q}$, and takes values in the subset
$\Gamma_Q$ of $(\frac12\Z)^{n+2}/(\frac12,\dots,\frac12)$ consisting of
those tuples whose
non-integer entries correspond exactly to the elements of either $Q$ or
$\o{Q}$. (Note that $\Gamma_Q$ is an $H_1(\Pi_n,\Z)$-torsor, and
additively, $\Gamma_I+\Gamma_Q=\Gamma_J$.)

With this understood, the class associated to the generator of
$CW^*(L_I,L_J)$ that lies near $\Sigma_K$ and wraps $k_j>0$ times around the hyperplane $x_j=0$ for
each $j\in K$ (and setting $k_j=0$ for $j\not\in K$) is $(k_0,\dots,k_{n+1})\in \Gamma_Q\subset
(\frac12\Z)^{n+2}/(\frac12,\dots,\frac12)$.

As before, the grading on the Floer complex can be used to avoid quotienting
by the diagonal subgroup. Namely,
since the above-mentioned generator has degree $2k_0$, 
its degree and homology class can be encoded simultaneously by the tuple of
half-integers $(k_0,\dots,k_{n+1})\in (\frac12 \Z)^{n+2}$ (where the
non-integer entries correspond exactly to either $Q$ or $\o{Q}$; we denote
the $\Z\times H_1(\Pi_n,\Z)$-torsor of such elements by $\hat{\Gamma}_Q$).

With this understood, any two generators of $CW^*(L_I,L_J)$ related by the Floer
differential must represent the same homology class, while their degrees
differ by 1, hence the corresponding tuples must differ by $(\frac12,\dots,
\frac12)$. However, since the subsets $Q$ satisfying conditions (1) or (2)
above have at most $n$ elements, all the generators of $CW^*(L_I,L_J)$
correspond to tuples in $\hat{\Gamma}_Q\subset
(\frac12\Z)^{n+2}$ in which all entries are non-negative and at least two 
are zero. Two such tuples cannot differ by $(\frac12,\dots,\frac12)$.
Thus, the Floer differential must vanish identically, and \eqref{eq:HWLIJ} holds as an
isomorphism of graded vector spaces.

The statement about module structures is a special case of Proposition
\ref{prop:HWLIJprod}, which we state and prove below.
\endproof

This calculation again agrees with the mirror
symmetry prediction. Without loss of generality we can assume that $0\not\in
I$ and $0\not\in J$. $\Ext^*(R/(z_I),R/(z_J))$ can then be computed
using the resolution \eqref{eq:resolveRzI}; the outcome of the calculation
matches the right-hand side of \eqref{eq:HWLIJ}.

\begin{prop}\label{prop:HWLIJprod}
Indexing generators by monomials in $z_0,\dots,z_{n+1}$ with half-integer
exponents as in Proposition \ref{prop:HWLIJ},
the Floer product $HW^*(L_J,L_K)\otimes HW^*(L_I,L_J)\to HW^*(L_I,L_K)$
is simply given by multiplication of monomials (and quotienting by
the appropriate ideals).
\end{prop}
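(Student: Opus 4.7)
The plan is to adapt the proof of Proposition \ref{prop:HWLI} to the three-Lagrangian setting. As there, degree and relative-homology book-keeping already determine the product up to a scalar: by the proof of Proposition \ref{prop:HWLIJ}, every generator of $HW^*(L_I, L_J)$, $HW^*(L_J, L_K)$, and $HW^*(L_I, L_K)$ is labeled by a tuple in $(\frac12 \Z)^{n+2}$ lying in a suitable $\hat\Gamma_Q$-torsor, encoding simultaneously the $\Z$-grading and the relative homotopy class of the chord. These labels are additive under the Floer product. Hence for any inputs $m_1 \in HW^*(L_I, L_J)$ and $m_2 \in HW^*(L_J, L_K)$, the product $\mu^2(m_2, m_1)$ must be supported on the unique generator whose label matches the sum of the labels of $m_1$ and $m_2$, i.e.\ on the generator labeled by the monomial product $m_1 m_2$ if such a generator exists, and is forced to vanish otherwise.

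To compute the remaining scalar coefficient, I would localize to a chart near a zero-dimensional stratum, as in Proposition \ref{prop:HWLI}. When $m_1 m_2$ labels a bona fide generator of $HW^*(L_I, L_K)$, its support $K_3 = K_1 \cup K_2$ has at most $n$ elements, and one can select a subset $K_0 \subset \{0, \ldots, n+1\}$ of size exactly $n$ containing $K_1 \cup K_2$ and such that $\o{K_0}$ meets each of $I, \o I, J, \o J, K, \o K$, so that $\Sigma_{K_0}$ is a zero-dimensional stratum lying in the closure of all three Lagrangians. In the affine chart $(x_{K_0, j})_{j \in K_0}$ centered at $\Sigma_{K_0}$, the Liouville structure and wrapping Hamiltonian (after the perturbation $H_\epsilon$ used in Proposition \ref{prop:HWLI}) take a product form over the $n$ coordinate directions, and each of $L_I, L_J, L_K$ is a product orthant whose sign in the $j$-th factor is determined by whether $j \in I$, $J$, or $K$. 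The maximum principle confines the relevant perturbed holomorphic triangles to this chart, and their counts factor as products over the $n$ directions.

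In each $\C^*$-factor, the contribution is a cylinder product as in \S \ref{ss:cyl_calc}, extended slightly to allow half-rotations between Lagrangians---which occur precisely when the two Lagrangians in that factor lie on opposite half-axes $\R_+$ and $\R_-$, and correspond to chords of half-integer winding. A direct inspection shows that the three input/output Lagrangian arcs bound a unique rigid triangle with coefficient $+1$, whose output wraps $(k_1)_j + (k_2)_j$ times. Multiplying across the $n$ factors gives the overall coefficient $+1$ for the output $m_1 m_2$, as claimed. The main obstacle is the combinatorial verification of the existence of $K_0$: this has to be organized case by case according to whether each of the input generators is labeled by a $Q$- or $\o Q$-type factor, but in each case the constraints on valid generators from Proposition \ref{prop:HWLIJ} leave enough room in $\o{K_3}$. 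A secondary technical point is fixing sign and orientation conventions on the $L_I$ uniformly across the $n$ factors so that the coefficient comes out to $+1$ rather than $\pm 1$; this is analogous to the sign-fixing already needed in Proposition \ref{prop:HWLI}.
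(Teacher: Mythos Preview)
Your proposal is correct and follows essentially the same route as the paper: first use the degree/homology bookkeeping to pin down the output generator up to scalar, then localize near a zero-dimensional stratum $\Sigma_T$ where all three Lagrangians appear as coordinate orthants, and compute the triangle count as a product over the $n$ coordinate factors (now allowing half-integer windings between $\R_+$ and $\R_-$).

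The one place where the paper is more efficient than your outline is the selection of the stratum. You suggest organizing the existence of $K_0$ case by case according to the $Q$/$\o{Q}$ types of the two inputs. The paper instead argues directly: writing $S$ for the support of the output (your $K_3$), since we are in the case where a valid output generator exists, $S$ contains none of $I,\o{I},K,\o{K}$; and because $S$ contains $Q(I,J)$ or $\o{Q}(I,J)$ (the symmetric difference of $I$ with $\o{J}$ or with $J$), containing $J$ or $\o{J}$ would force $S$ to contain $\o{I}$ or $I$, a contradiction. So $S$ also misses $J,\o{J}$. Then one simply picks $i_1\in I\cap\o{S}$ and $i_2\in\o{I}\cap\o{S}$ and sets $T=\o{\{i_1,i_2\}}$; the same symmetric-difference argument (now using that $S$ contains $Q(J,K)$ or $\o{Q}(J,K)$ and that $T$ already misses $J,\o{J}$) shows $T$ misses $K,\o{K}$ as well. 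This avoids your four-case split entirely. The paper also does not dwell on the sign issue you flag; it simply records a single contributing triangle in each factor.
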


\proof
Recall from the proof of Proposition \ref{prop:HWLIJ} that the degree and
homology class of each generator of $HW^*(L_I,L_J)$ can be encoded by an
element of $\hat\Gamma_{Q=Q(I,J)}\subset (\frac12 \Z)^{n+1}$, which we call
its {\em class}, and that
this corresponds to the labelling by monomials. Namely, the class of the
generator of $HW^*(L_I,L_J)$
denoted by $\prod z_j^{k_j}$ is $(k_0,\dots,k_{n+1})$. We also recall
that the entries of this tuple are non-negative, and that the set $K$ of
its non-zero entries must contain either $Q(I,J)$
or its complement (one of which corresponds to the half-integer entries) but cannot entirely contain any of
$I,\o{I},J,\o{J}$.

Whenever there is a perturbed holomorphic curve contributing to the Floer
product, the relative homology class of the output chord must equal the sum
of those of the inputs, and its degree must also be the sum of those of the
input. It follows that the class of the output must be the sum of those of
the inputs. (Here we recall that, under addition of tuples,
$\hat\Gamma_{Q(I,J)}+\hat\Gamma_{Q(J,K)}=\hat\Gamma_{Q(I,K)}\subset (\frac12
\Z)^{n+2}$.)

Thus, given generators $\prod z_j^{k_j}\in HW^*(L_I,L_J)$ and $\prod
z_j^{\ell_j}\in HW^*(L_J,L_K)$, their product must be 
\begin{enumerate}
\item[(a)] a multiple of
$\prod z_j^{k_j+\ell_j}\in HW^*(L_I,L_K)$ if there is a generator of
$HW^*(L_I,L_K)$ 
representing the class $(k_0+\ell_0,\dots,k_{n+1}+\ell_{n+1})$,\smallskip
\item[(b)] zero otherwise.
\end{enumerate}
Case (b) obviously agrees with our expected product
formula; so it is enough to consider case (a).
Let $S=\{j\in \{0,\dots,n+1\}\,|\,k_j+\ell_j>0\}=\{j\,|\,k_j>0\}\cup
\{j\,|\,\ell_j>0\}$. By assumption, $S$
contains either $Q(I,J)$ or $\o{Q}(I,J)$, and it contains either $Q(J,K)$ or
$\o{Q}(J,K)$; but since we are in case (a), it does not contain any of $I,\o{I},K,\o{K}$.
Since $Q(I,J)$ (resp.\ $\o{Q}(I,J)$) is the symmetric difference of $I$ and 
$\o{J}$ (resp.\ $I$ and $J$), this implies that $S$ does not contain $J$ or
$\o{J}$ either. Moreover, by the same argument,
for arbitrary elements $i_1\in I\cap \o{S}$ and
$i_2\in \o{I}\cap \o{S}$, the $n$-element subset $T=\o{\{i_1,i_2\}}\supseteq
S$, which does not contain $I$ or $\o{I}$ by construction, also fails to 
contain any of $J,\o{J},K,\o{K}$.

The holomorphic curves contributing to the product can then be determined by
working in a local model near the 0-dimensional stratum $\Sigma_T$, using
the affine coordinates $x_{T,j}$, $j\in T$, and reducing to a product
situation, exactly as in the proof of Proposition \ref{prop:HWLI}. The only
difference is that $L_I,L_J,L_K$ now correspond to different orthants, hence
inside each $\C^*$ factor they project to arcs that may be either
$\R_+$ or $\R_-$. In $\C^*$, the generators of
$CW^*(\R_+,\R_-)$ or $(\R_-,\R_+)$ are naturally labelled by 
half-integers rather than integers. 
Nonetheless, for each pair of inputs $x_{-k_j}$ and $x_{-\ell_j}$ 
there is a unique triangle contributing to the Floer product, whose
output is $x_{-k_j-\ell_j}$. Thus, in the product $(\C^*)^n$ there is
a unique contribution to the Floer product, and we find that
$\bigl(\prod z_j^{k_j}\bigr)\cdot\bigl(\prod z_j^{\ell_j}\bigr)=
\prod z_j^{k_j+\ell_j}$ as expected.
\endproof

\subsection{Exact triangles and generators}

We expect that the $A_\infty$-category $\W(\Pi_n)$ is entirely determined by the
cohomology-level computations in Propositions 
\ref{prop:HWLI}--\ref{prop:HWLIJprod} and the existence of certain exact
triangles that follow from the general framework introduced in the previous
sections.

\begin{prop}\label{prop:exactuIJK}
Given any partition $\{0,\dots,n+1\}=I\sqcup J\sqcup K$ into three
non-empty disjoint subsets, with $0\in K$, there is an exact triangle 
\begin{equation}\label{eq:exactuIJK}
L_I\stackrel{u_J}{\longrightarrow} L_K \stackrel{u_I}{\longrightarrow} L_J
\stackrel{u_K}{\longrightarrow} L_I[1].
\end{equation}
\end{prop}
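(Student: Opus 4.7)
The plan is to establish \eqref{eq:exactuIJK} in two stages: first check that each consecutive composition in the triangle vanishes, then identify the mapping cone of $u_J$ with $L_J$ by appealing to the structural framework of Sections \ref{s:hmscompl}--\ref{s:conjFS}. The vanishing is immediate from Proposition \ref{prop:HWLIJprod}: $\mu^2(u_I, u_J)$ corresponds to the formal monomial product $z_J^{1/2} \cdot z_I^{1/2}$, whose support $I \cup J$ equals $\o{Q}(I, J) = \o{K}$. In the decomposition \eqref{eq:HWLIJ}, this would lie in the $u_{I \cup J}$-summand $\C[z_0,\dots,z_{n+1}]/(z_{I \cap J}, z_{\o{I} \cap \o{J}}) \cdot u_{I \cup J}$; as $I \cap J = \emptyset$ makes $z_{I \cap J} = 1$, the summand is zero and $\mu^2(u_I, u_J) = 0$. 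Cyclic permutations of the same argument handle $\mu^2(u_J, u_K)$ and $\mu^2(u_K, u_I)$.

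For the cone identification, the base partition $(I_0, J_0, K_0) = (\{1, \dots, n\}, \{n+1\}, \{0\})$ is free: applying \eqref{eq:jtriangle} to the admissible Lagrangian $L_{adm}$ of $\F^\circ((\C^*)^n, f)$ yields, as worked out at the end of \S\ref{s:pants}, exactly $L_{\{1,\dots,n\}} \to L_{\{0\}} \to L_{\{n+1\}} \to L_{\{1,\dots,n\}}[1]$. The $\mathfrak{S}_{n+1}$-symmetry of $\Pi_n$ permuting the coordinates $x_1, \dots, x_{n+1}$ (while fixing $x_0$ and thereby preserving the $\Z$-grading convention) transports this base triangle to every partition with $K = \{0\}$. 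For partitions with $|K| \geq 2$, I would combine the $K = \{0\}$ triangles via the octahedral axiom: for any $k \in K \setminus \{0\}$, refining via the $K = \{0\}$ triangles associated to the coarser partitions $(I, J \cup (K \setminus \{0\}), \{0\})$ and $(I \cup (K \setminus \{0\}), J, \{0\})$ should stitch together into \eqref{eq:exactuIJK} for the given partition. Alternatively, Remark \ref{rmk:rescale} supplies variants of \eqref{eq:jtriangle} in which a different coordinate index plays the distinguished role, providing additional building blocks from different framings.

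The main obstacle is this octahedral bootstrap: combining the $K = \{0\}$ triangles requires verifying that the arrows $u_I, u_J, u_K$ from Proposition \ref{prop:HWLIJprod} agree (up to invertible scalars) with the morphisms appearing in the octahedral diagrams, which demands chasing the natural transformations underlying \eqref{eq:jtriangle} through the symmetry action and across re-framings. A geometrically cleaner alternative is a direct Lagrangian surgery argument at the stratum $\Sigma_J$: locally near $\Sigma_J$, $L_I$ and $L_K$ project to antipodal real orthants of the transverse $(\C^*)^{|J|}$-slice, and surgery along the family of Reeb chords generating $u_J$ should produce a Lagrangian Hamiltonian isotopic to $L_J$, delivering \eqref{eq:exactuIJK} in one stroke via the surgery exact triangle. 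Rigorously establishing this surgery exact triangle in a higher-codimension, non-compact, wrapped setting is however itself a significant analytic task, and is the genuine source of difficulty either way.
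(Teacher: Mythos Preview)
Your vanishing-of-compositions check is fine, but the main argument has a gap in the symmetry step. You assert that the $\mathfrak{S}_{n+1}$-action permuting $x_1,\dots,x_{n+1}$ carries the base triangle $(I_0,J_0,K_0)=(\{1,\dots,n\},\{n+1\},\{0\})$ to \emph{every} partition with $K=\{0\}$. Permutations preserve $|I|$ and $|J|$, so this orbit consists only of partitions with $|J|=1$ (or $|I|=1$ after rotating the triangle). For $n\ge 3$ there are partitions with $K=\{0\}$ and $|I|,|J|\ge 2$ --- e.g.\ $I=\{1,2\}$, $J=\{3,4\}$ when $n=3$ --- which you have not reached. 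Your octahedral bootstrap then feeds on these unestablished $K=\{0\}$ triangles, and the auxiliary triangles one would want to combine (such as $L_{\{1,2\}}\to L_{\{1,2,3\}}\to L_{\{3\}}$, which has $K=\{0,4\}$) are themselves among the cases to be proved, so the scheme is circular as written. The re-framings of Remark~\ref{rmk:rescale} do not help: each framing still produces a base triangle of the same shape, with a singleton on one side. A repair via a more elaborate simultaneous induction on $|I|,|J|,|K|$ may be possible, but tracking that the connecting morphisms remain the specific $u_I,u_J,u_K$ through iterated octahedra is precisely the difficulty you flag, and the surgery alternative you propose is, as you say, analytically heavy in this wrapped, higher-codimension setting.

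The paper sidesteps all of this by inducting on the dimension $n$ rather than on the partition. After using the full $\mathfrak{S}_{n+2}$-symmetry (allowing a grading change) to reduce to the case $\{0,n+1\}\subset K$, the subsets $I$ and $J$ lie inside $\{1,\dots,n\}$, so the inductive hypothesis already supplies an exact triangle $\ell_I\to\ell_{I\sqcup J}\to\ell_J\to\ell_I[1]$ in $\W(\Pi_{n-1})$. The lifting functor $j:\W(\Pi_{n-1})\to\W(\Pi_n)$ of \S\ref{s:FSj} is an $A_\infty$-functor and hence exact; since $j(\ell_I)=L_I$, $j(\ell_J)=L_J$, and $j(\ell_{I\sqcup J})=L_{I\sqcup J}=L_K$, applying $j$ yields \eqref{eq:exactuIJK} in one stroke. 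A short local check near the base locus of $f$ then identifies the images of the connecting morphisms with $u_I,u_J,u_K$. This handles arbitrary $|I|,|J|$ simultaneously and never invokes the octahedral axiom.
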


\noindent Note that $L_K=L_{I\sqcup J}$. Thus, under mirror symmetry the
exact triangle \eqref{eq:exactuIJK} corresponds to the triangle in
$D^b\Coh(Z)$ induced by the short exact sequence
$$0\to \O_{Z_I} \to \O_{Z_{I\sqcup J}} \to \O_{Z_J} \to 0.$$

\proof[Sketch of proof]
The easiest way to establish the existence of an exact triangle relating
$L_I,L_K,L_J$ in $\W(\Pi_n)$ is by induction on dimension,
using symmetry and the lifting functor $j$. The case $n=1$ holds
by \cite{AAEKO}, as reviewed in Section \ref{s:pants1} (see 
\eqref{eq:2exacttri}).

Assume first that $\{0,n+1\}\subset K$, so that $I$ and $J$ are
subsets of $\{1,\dots,n\}$. Then, as noted in \S \ref{s:pants}, the 
lifting functor $\rho:\W(\Pi_{n-1})\to \W(\Pi_n)$ coming from the 
identification $\Pi_n\simeq (\C^*)^n\setminus \Pi_{n-1}$ maps the 
objects $\ell_I$, $\ell_J$ and $\ell_{I\sqcup J}$ of $\W(\Pi_{n-1})$ to
$L_I$, $L_J$, and $L_{I\sqcup J}$.
Assuming the conjecture holds for $\Pi_{n-1}$, in $\W(\Pi_{n-1})$ we have 
an exact triangle
$$\ell_I\stackrel{v_J}{\longrightarrow} \ell_{I\sqcup J} \stackrel{v_I}{\longrightarrow} 
\ell_J \stackrel{v_{K'}}{\longrightarrow} \ell_I[1],$$
where $K'=\{0,\dots,n\}\setminus (I\cup J)$, and we change the notation for the generators
of the Floer complexes in $\W(\Pi_{n-1})$ to $v_I,v_J,v_{K'}$ to avoid
confusion.
Since $A_\infty$-functors are automatically exact, the image by $j$ of this
exact triangle in $\W(\Pi_{n-1})$ is an exact triangle relating
$L_I$, $L_{I\sqcup J}$ and $L_J$ in $\W(\Pi_n)$.

Moreover, recall that the action of $j$ on morphisms comes from the
inclusion of some fiber of $f=-x_0/x_{n+1}$ along which the wrapping Hamiltonian
reaches its minimum; for example, we can take the fiber above $-1$.
Also recall that we use the embedding \eqref{eq:pantsembed} to
match the pictures for $\Pi_{n-1}$ and $\Pi_n$; in this setting, the
embedding into the fiber $f^{-1}(-1)$ that gives rise to the functor $j$
is $(y_1\!:\!\dots\!:\!y_n\!:\!y_0)\mapsto
(y_1\!:\!\dots\!:\!y_n\!:\!y_0\!:\!y_0)$.

With this understood, it is not hard to check that $j(v_I)=u_I$ and $j(v_J)=u_J$. 
Meanwhile, because $v_{K'}$ wraps 
halfway around the hyperplane $y_0=0$, which maps to the base locus of $f$,
its image under the embedding is a trajectory 
that wraps halfway around {\em both} of the hyperplanes $x_0=0$ and
$x_{n+1}=0$. Hence $j(v_{K'})=u_{K'\cup \{n+1\}}=u_K$. This completes the
proof in the case where
$\{0,n+1\}\subset K$.

The remaining cases follow by symmetry under the action of
$\mathfrak{S}_{n+2}$. Namely, for $n\ge 2$ at least one of the subsets
$I,J,K$ must have cardinality greater than one. Observing that 
a cyclic permutation of $(I,J,K)$ amounts simply to a rotation
of the exact triangle \eqref{eq:exactuIJK}, and relaxing the setup to allow
$0$ to be in any of $I,J,K$, we can
assume without loss of generality that $|K|\ge 2$. We can then use
$\mathfrak{S}_{n+2}$-symmetry to relabel the elements of $\{0,\dots,n+1\}$
(with a grading change as needed if the permutation does not fix $0$)
in order to reduce to the case where $\{0,n+1\}\subset K$.
\endproof

The two remaining ingredients in the proof of homological mirror symmetry
for the pair of pants $\Pi_n$ are:

\begin{conj}
$\W(\Pi_n)$ is split-generated by the objects $L_{\{i\}}$, $i=0,1,\dots,n+1$.
\end{conj}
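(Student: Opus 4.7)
\noindent The plan is induction on $n$. The base case $n=1$ is the explicit computation of \cite{AAEKO} reviewed in Section~\ref{s:pants1}, where the three components $L_{\{0\}}, L_{\{1\}}, L_{\{2\}}$ of the real locus of $\C^*\setminus\{-1\}$ are shown to split-generate $\W(\Pi_1)$.

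For the inductive step, first observe that the Laurent polynomial $f=x_1+\dots+x_n+1$ satisfies hypothesis~\eqref{eq:linebundlecase}: its Newton polytope is a standard simplex with $0$ as a vertex of the unique maximal cell. By the discussion following that hypothesis, $\F^\circ((\C^*)^n,f)$ and $\F((\C^*)^n,f)$ coincide, and as in \cite{AbToric,AbToric2} this category is split-generated by the single admissible Lagrangian $L_{adm}$ discussed in Section~\ref{s:pants}. Moreover, the pushout of Remark~\ref{rmk:sylvan} (following the gluing formalism of \cite{GPS,Sylvan2}) applies to the identification $\Pi_n\simeq (\C^*)^n\setminus \Pi_{n-1}$, and implies that $\W(\Pi_n)$ is split-generated by the joint image of the acceleration functor $\alpha_\infty:\F^\circ((\C^*)^n,f)\to\W(\Pi_n)$ and the lifting functor $j:\W(\Pi_{n-1})\to\W(\Pi_n)$.

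Now apply the inductive hypothesis: $\W(\Pi_{n-1})$ is split-generated by $\ell_{\{i\}}$ for $i=0,\dots,n$. The computations of Section~\ref{s:pants} give $j(\ell_{\{i\}})=L_{\{i\}}$ for $i\in\{1,\dots,n\}$ and $j(\ell_{\{0\}})=j(\ell_{\{1,\dots,n\}})=L_{\{1,\dots,n\}}$, while $\alpha_\infty(L_{adm})=L_{\{0\}}$. Hence $\W(\Pi_n)$ is split-generated by $L_{\{0\}}, L_{\{1\}},\dots,L_{\{n\}},L_{\{1,\dots,n\}}$. The final object $L_{\{1,\dots,n\}}$ is built from the singletons by iterating Proposition~\ref{prop:exactuIJK}: using partitions of the form $I=\{k\}$, $J=\{k+1,\dots,n\}$, $K=\{0,1,\dots,k-1,n+1\}$ (so that $0\in K$ and $L_K=L_{\{k,\dots,n\}}$), one recursively realizes $L_{\{k,\dots,n\}}$ as a cone built from $L_{\{k\}}$ and $L_{\{k+1,\dots,n\}}$ for $k=n-1,n-2,\dots,1$. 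Thus $L_{\{0\}},L_{\{1\}},\dots,L_{\{n\}}$ already split-generate $\W(\Pi_n)$, and adjoining the (redundant) object $L_{\{n+1\}}$ yields the full conjectured list.

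The main obstacle is importing the Sylvan--Ganatra--Pardon--Shende pushout into our specific setting. The categories $\W(\Pi_n)$, $\F^\circ((\C^*)^n,f)$, and $\W(\Pi_{n-1})$ are defined here using the particular $\mathfrak{S}_{n+2}$-symmetric Liouville structure and quadratic wrapping Hamiltonian constructed in \S\ref{s:pantscomp}, whereas the pushout is phrased in the language of Liouville sectors or Liouville manifolds with stops \cite{GPS,Sylvan2}. One must verify that viewing $f:\Pi_n\to \C^*$ as a fibration with stop at $\Pi_{n-1}=f^{-1}(0)$ produces a stopped Liouville structure equivalent (up to deformation) to ours; that the resulting partially wrapped category recovers $\F^\circ((\C^*)^n,f)$ under $\alpha_\infty$ of \S\ref{s:FSaccel}; and that the other edge of the pushout $\F(\C^*\times\Pi_{n-1},z)\to\W(\Pi_n)$ is compatible with $j$ in the sense of Remark~\ref{rmk:sylvan}, realizing the expected ``extension of scalars'' by the endomorphism algebra $\C[t]$ of the generator of $\F(\C^*,z)$. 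Throughout, the grading conventions (in particular the degree $2$ shift for Reeb orbits wrapping once around $\Pi_{n-1}$) must be reconciled with those implicit in the pushout formalism.
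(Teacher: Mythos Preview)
Your proposal is correct and follows essentially the same route as the paper: induction on $n$ via Sylvan's pushout (Remark~\ref{rmk:sylvan}), with the inductive step reducing split-generation to the images of $j$ and $\alpha_\infty$, and then using Proposition~\ref{prop:exactuIJK} to pass from the resulting list to the singletons. The only cosmetic difference is the final bookkeeping: the paper observes that $L_{\{1,\dots,n\}}=L_{\{0,n+1\}}$ sits in a single exact triangle with $L_{\{0\}}$ and $L_{\{n+1\}}$, so one application of Proposition~\ref{prop:exactuIJK} suffices to swap $L_{\{0,n+1\}}$ for $L_{\{n+1\}}$; you instead iterate $n-1$ triangles to build $L_{\{1,\dots,n\}}$ from $L_{\{1\}},\dots,L_{\{n\}}$, which incidentally shows that $L_{\{n+1\}}$ is redundant in the generating set. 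Your closing paragraph on the compatibility issues with the Sylvan--GPS formalism is appropriate and matches the paper's own reliance on work in progress.
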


This statement follows from Zack Sylvan's work in progress \cite{Sylvan2},
as discussed in Remark \ref{rmk:sylvan}. Namely, $\W((\C^*)^n\setminus H)$ is 
generated by the images of the functors $i_1$ and $i_2$ in the pushout
diagram; translating to our setup, the image of $i_1$ coincides with that
of the lifting functor $j$, while $i_2=\alpha_\infty$. By induction on
dimension, assuming that $\W(\Pi_{n-1})$ is split-generated by $\ell_{\{0\}}=
\ell_{\{1,\dots,n\}},\ell_{\{1\}},\dots,\ell_{\{n\}}$, the image of $j$
is split-generated by $j(\ell_{\{1,\dots,n\}})=L_{\{1,\dots,n\}}=L_{\{0,n+1\}}$
and $j(\ell_{\{i\}})=L_{\{i\}}$, $i=1,\dots,n$. Meanwhile, as seen in \S
\ref{s:pants}, $\alpha_\infty$ maps the generator of $\F^\circ((\C^*)^n,f)$ 
to $L_{\{0\}}$. Thus, $\W(\Pi_n)$ is split-generated by $L_{\{0,n+1\}}$,
$L_{\{1\}},\dots,L_{\{n\}}$, and $L_{\{0\}}$. 
Finally, by Proposition~\ref{prop:exactuIJK} the objects
$L_{\{0\}}$, $L_{\{0,n+1\}}$ and $L_{\{n+1\}}$ are related by an exact
triangle, so any two of them generate the third one. 

\begin{conj}
Up to homotopy, there is a unique $A_\infty$-structure on the algebra 
$\bigoplus_{I,J} HW^*(L_I,L_J)$ which is compatible with the grading and
satisfies the two conditions:

$(1)$ any generator appearing in the output of a higher product represents
a relative homology class in $H_1(\Pi_n,\Z)$ equal to the sum of those of the inputs,
and

$(2)$ $\mu^3(u_I,u_J,u_K)=\pm \mathrm{id}$ for all $I\sqcup J\sqcup
K=\{0,\dots,n+1\}$ $($as implied by the exact 
triangles~\eqref{eq:exactuIJK}$)$.
\end{conj}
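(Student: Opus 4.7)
The plan is to follow the standard deformation-theoretic approach to uniqueness of $A_\infty$-structures, in the spirit of the $n=1$ case treated in \cite{AAEKO}. An $A_\infty$-structure on the $\Z$-graded algebra $\A = \bigoplus_{I,J} HW^*(L_I,L_J)$ extending a given associative product is encoded by a Maurer--Cartan element in the Hochschild dg Lie algebra, and such structures are classified up to gauge equivalence by appropriately graded pieces of the (truncated) Hochschild cohomology $HH^*(\A,\A)$. The two ingredients needed are (a) a vanishing (or near-vanishing) result for the obstruction groups in the relevant grading, and (b) an argument that condition (2) rigidifies any remaining ambiguity.

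First, I would refine $\A$ by the additional grading coming from the relative homology classes of Floer generators, with the half-integer shifts introduced in the proof of Proposition \ref{prop:HWLIJ}; as shown there, every generator of $CW^*(L_I,L_J)$ carries a canonical class in the torsor $\hat\Gamma_{Q(I,J)}\subset(\tfrac12\Z)^{n+2}$, and this refinement is compatible with the Floer product by Proposition \ref{prop:HWLIJprod}. Condition (1) then says that we restrict attention to the sub-dg-Lie-algebra of $CC^*(\A,\A)$ consisting of cochains of $H_1$-degree zero (equivalently, those preserving the $(\tfrac12\Z)^{n+2}$-grading after the torsor identification). The key observation is that this grading is extremely rigid: for inputs of fixed classes $\gamma_1,\dots,\gamma_k$ and a prescribed cohomological degree $2-k+\sum|\gamma_i|$, the target space of possible values of $\mu^k$ in $HW^*(L_{I_0},L_{I_k})$ is at most one-dimensional (and in most cases zero), because Propositions \ref{prop:HWLI}--\ref{prop:HWLIJ} completely describe which monomials $\prod z_i^{k_i}$ occur.

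Second, I would carry out the obstruction theory inductively on $k$. Assume $\mu^2,\dots,\mu^{k-1}$ are determined up to homotopy. Then $\mu^k$ is determined by the $A_\infty$-relation at order $k$ up to the addition of a cocycle in the $(2-k)$-graded piece of $CC^2_0(\A,\A)$, and two choices give equivalent $A_\infty$-structures iff they differ by a coboundary. Using the rigidity described above, each such cocycle is forced to be a linear combination of very few explicit cochains, and I would identify gauge equivalences killing all but at most one of them (typically the ``diagonal'' Massey-product contributions producing an identity morphism). Condition (2) fixes the normalization of the remaining degree of freedom at $k=3$; the $A_\infty$-relations then propagate this normalization to higher $k$, since the Massey products that appear in the $A_\infty$-relation for $\mu^k$ only involve $\mu^3(u_I,u_J,u_K)$ and lower-order data already pinned down by induction.

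The main obstacle will be the combinatorial input: showing that, in the relevant bigrading, every cocycle in $CC^2_0(\A,\A)$ is cohomologous either to zero or to a cocycle that is uniquely determined by the Massey product in condition (2). Although the $H_1$-grading reduces the problem to a finite enumeration for each $(k,\gamma)$, the algebra $\A$ has many objects $L_I$ and is not obviously Koszul, so one cannot simply appeal to a standard vanishing theorem. A natural strategy is to exploit the $\mathfrak{S}_{n+2}$-symmetry (up to grading corrections at $0$) and the inductive structure provided by the lifting functor $j:\W(\Pi_{n-1})\to\W(\Pi_n)$, reducing the computation at dimension $n$ to data from dimension $n-1$; combined with the explicit polynomial structure of each $HW^*(L_I,L_I)$, this should be enough to push the induction through.
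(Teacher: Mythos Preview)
The statement you are attempting to prove is labelled a \emph{Conjecture} in the paper, and the paper does not supply a proof. All the paper offers after the statement is a reformulation of condition~(1) in terms of the $(\tfrac12\Z)^{n+2}$-grading (the output class of $\mu^k$ differs from the sum of the input classes by $(2-k)(\tfrac12,\dots,\tfrac12)$), followed by the remark that ``the case $n=1$ is established in \cite{AAEKO} by an explicit Hochschild cohomology calculation.'' There is therefore no ``paper's own proof'' to compare your proposal against: this is an open problem for $n\ge 2$.

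Your outline is the natural extension of the \cite{AAEKO} strategy to higher $n$, and the paper implicitly endorses this as the expected route. However, your proposal is not a proof but a program, and you correctly identify the genuine gap yourself in your final paragraph: the required control of $HH^*(\A,\A)$ in the relevant bigrading is not established. The one-dimensionality of target spaces for $\mu^k$ that you invoke (``at most one-dimensional, and in most cases zero'') constrains the Hochschild \emph{cochains} sharply, but does not by itself compute the Hochschild \emph{cohomology}; the Hochschild differential mixes cochains across many objects $L_I$, and neither the $\mathfrak{S}_{n+2}$-symmetry nor the inductive structure via $j$ has been shown to reduce this to a tractable computation. Until that Hochschild calculation (or an alternative rigidity argument) is actually carried out, the statement remains conjectural, which is exactly how the paper presents it.
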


\noindent
Representing generators by monomials in $z_0,\dots,z_{n+1}$ with
half-integer exponents as in Proposition \ref{prop:HWLIJprod},
condition (1) and compatibility with the grading can be restated as:
the class of the output of a Floer product $\mu^k$ differs from the
sum of those of its inputs by $(2-k)(\frac12,\dots,\frac12)$.
In other terms, given generators $\gamma_i=\prod z_j^{\ell_{i,j}}\in HW^*(L_{I_{i-1}},
L_{I_i})$ for $i=1,\dots,k$, the product $\mu^k(\gamma_k,\dots,
\gamma_1)$
must be a multiple of $\gamma_{out}=\prod z_j^{\ell_{out,j}}$,
where $\ell_{out,j}=\sum_{i=1}^k \ell_{i,j}+\frac{2-k}{2}$ for
$j=0,\dots,n+1$,
if $HW^*(L_{I_0},L_{I_k})$ contains such a generator, and zero otherwise.
The conjecture states that, given these constraints, the
$A_\infty$-structure is entirely determined by the $\mu^3$ in condition (2).
The case $n=1$ is established in \cite{AAEKO} by an explicit Hochschild
cohomology calculation.

\end{document}